\title[Limits of balanced metrics]{Limits of balanced metrics on vector bundles and polarised manifolds}
\author[M. Garcia-Fernandez]{Mario Garcia-Fernandez}
  \address{Centre for Quantum Geometry of Moduli Spaces\\ Aarhus Universitet\\
    Ny Munkegade 118\\ DK-8000 Aarhus C, Denmark}
  \email{mariogf@imf.au.dk}
\author[J. Ross]{Julius Ross}
  \address{Department of Pure Mathematics and Mathematical Statistics, University of Cambridge, Wilberforce Road,Cambridge, CB3 0WB, UK}
  \email{j.ross@dpmms.cam.ac.uk}
\thanks{The first author is supported by QGM (Centre for Quantum Geometry of Moduli Spaces) funded by the Danish National Research Foundation. The second author is partially supported by a Marie Curie Grant (PIRG-GA-2008-230920). Part of this work was undertaken during the first author's visit to the Isaac Newton Institute.}
\theoremstyle{plain}
\newtheorem{theorem}{Theorem}[section]
\newtheorem{lemma}[theorem]{Lemma}
\newtheorem{proposition}[theorem]{Proposition}
\theoremstyle{definition}
\newtheorem{definition}[theorem]{Definition}
\newtheorem{definition-theorem}[theorem]{Definition-Theorem}
\theoremstyle{remark}
\newtheorem{remark}[theorem]{Remark}
\newcommand{\secref}[1]{\S\ref{#1}}
\numberwithin{equation}{section} \setcounter{tocdepth}{1}
\newcommand{\tr}{\operatorname{tr}}
\newcommand{\Id}{\operatorname{Id}}
\newcommand{\End}{\operatorname{End}}
\newcommand{\dbar}{\bar{\partial}}
\newcommand{\CC}{{\mathbb C}}
\newcommand{\PP}{{\mathbb P}}
\newcommand{\RR}{{\mathbb R}}
\newcommand{\ZZ}{{\mathbb Z}}
\newcommand{\rk}{\operatorname{rk}}
\renewcommand{\(}{\left(}
\renewcommand{\)}{\right)}
\newcommand{\defeq}{\mathrel{\mathop:}=} 
\newcommand{\surj}{\to\kern-1.8ex\to}
\newcommand{\hra}{\hookrightarrow}
\newcommand{\cA}{\mathcal{A}}
\newcommand{\cJ}{\mathcal{J}}
\newcommand{\cP}{\mathcal{P}}
\newcommand{\cG}{\mathcal{G}}
\newcommand{\cL}{\mathcal{L}}
\newcommand{\cS}{\mathcal{S}}
\newcommand{\cT}{{\mathcal{T}}}
\newcommand{\Lie}{\operatorname{Lie}}
\newcommand{\LieG}{\operatorname{Lie} \cG}
\newcommand{\cX}{{\widetilde{\mathcal{G}}}}
\newcommand{\cH}{\mathcal{H}} 
\newcommand{\LieH}{\Lie\cH}
\newcommand{\cHL}{\cH_L}
\newcommand{\cXL}{\cX_L}
\newcommand{\LieXL}{\operatorname{Lie} \cX_L}
\begin{document}

\maketitle

\begin{abstract}
We consider a notion of balanced metrics for triples $(X,L,E)$ which depend on a parameter $\alpha$, where $X$ is smooth complex manifold with an ample line bundle $L$ and $E$ is a holomorphic vector bundle over $X$. For generic choice of $\alpha$, we prove that the limit of a convergent sequence of balanced metrics leads to a Hermitian-Einstein metric on $E$ and a constant scalar curvature K\"ahler metric in $c_1(L)$. For special values of $\alpha$, limits of balanced metrics are solutions of a system of coupled equations relating a Hermitian-Einstein metric on $E$ and a K\"ahler metric in $c_1(L)$. For this, we compute the top two terms of the density of states expansion of the Bergman kernel of $E \otimes L^k$.
\end{abstract}


\section{Introduction}

\begin{flushright}
\emph{}

\end{flushright}

There are two well known theories in which the existence of a canonical metric in K\"ahler geometry is related to a stability condition in algebraic geometry.  One is the Hitchin-Kobayashi correspondence for Hermitian-Einstein metrics on vector bundles, and the other the Yau-Tian-Donaldson conjecture for constant scalar curvature K\"ahler (cscK) metrics on projective manifolds.

In each of these contexts there is a crucial role played by balanced metrics.  On the one hand, the existence of a balanced metric can be shown to be equivalent to a stability condition in the sense of finite dimensional Geometric Invariant Theory \cite{Luo,W2,Zh}.  On the other hand, the asymptotic behaviour of a sequence of balanced metrics is governed by a ``density of states'' expansion, from which the Hermitian-Einstein or cscK equations can be extracted.

In this paper we combine these ideas by considering simultaneously stability of  a vector bundle and its underling manifold.  Let $(X,L,E)$ be a triple consisting of a smooth complex manifold $X$ of dimension $n$ with an ample holomorphic line bundle $L$ and a holomorphic vector bundle $E$ of rank $r$.  Picking a basis for the space of sections of $L^{\otimes k}$ and for $E\otimes L^{\otimes k}$ where $k\gg 0$ gives an embedding
\begin{equation}
 \phi\colon X \hra \mathbb P \times \mathbb G\label{eq:embeddingpg}
\end{equation}
into a product of a projective space and a Grassmannian of $r$-planes.    This yields a stability problem (of the associated point in the Hilbert scheme of $\mathbb P\times \mathbb G$) under the natural group action given by the freedom in the choice of bases.  As usual, stability depends on a choice of linearisation, and below we consider a natural class of such choices that depends on a parameter $\alpha\in \mathbb R^{n+2}$.  One expects that such stability to be connected to some kind of canonical metrics, and we show this to be the case.  In fact
\begin{enumerate}
\item For generic choice of $\alpha$, asymptotic stability (as $k$ tends to infinity) is related to asking for both a  Hermitian-Einstein metric on $E$ and a cscK metric in $c_1(L)$.   Thus, from an algebraic view, asymptotic stability should be the simultaneous asymptotic stability of $(X,L)$ as a polarised manifold and slope stability of $E$ as a vector bundle.\vspace{2mm}
\item For particular choices of $\alpha$, asymptotic stability is related to a certain ``coupled equation'' that asks for a Hermitian-Einstein metric on $E$ and a K\"ahler metric in $c_1(L)$ whose scalar curvature is related to second order terms derived from the curvature of the metric on $E$.
\end{enumerate}\medskip

It should be emphasised that for a genuine stability condition one would additionally require that $\alpha$ is chosen so that the corresponding linearisation (or symplectic form) is positive, otherwise stability should be taken in a formal sense.   This will not be relevant for the purposes of this paper since we consider the balanced metrics directly (although it is natural to expect that their existence is equivalent to a stability just as \cite{Luo,W2,Zh}).\medskip


To state a precise theorem, suppose that $h$ is a hermitian metric on $L$ with curvature form $-i\omega$, so the induced metric $h^k$ on $L^{\otimes k}$ has curvature $-i\omega_1:= -ik\omega$.  Similarly if $H$ is a hermitian metric on the bundle $E$ with curvature form $F_H$ then the induced hermitian metric on $\det (E\otimes L^{\otimes k})$ has curvature form $-i\omega_2:= -irk\omega + \tr F_H$. Now, given $\alpha= (\alpha_0,\ldots,\alpha_{n+1})\in \mathbb R^{n+2}$ define an $L^2$-inner product on the space $H^0(L^{\otimes k})$ using the fibre metric $h^k$ and the volume form $dV_1/\int_XdV_1$, where
\begin{eqnarray*}
  dV_1 = \sum_{p=1}^{n+1} p \alpha_{p} \omega_1^{p-1} \wedge \omega_2^{n+1-p}.
\end{eqnarray*}
Similarly we define an $L^2$-metric on $H^0(E\otimes L^{\otimes k})$ from the fibre metric $H\otimes h^k$ but this time using the volume form $dV_2/\int_XdV_2$, where
\begin{equation}
  dV_2 = \sum_{p=0}^{n} (n+1-p) \alpha_p \omega_1^p \wedge \omega_2^{n-p}.
\end{equation}
(These forms arise naturally from the moment map theory; see \secref{subsec:balancedemb}). The non-vanishing of $dV_j$ at any point of $X$ will be justified asymptotically, as $k$ tends to infinity, under natural assumptions on the parameter $\alpha$ \eqref{eq:betasnonzero}.\medskip


We say that a pair $(h,H)$ is \emph{$\alpha$-balanced} (with respect to $k$) if there is a choice of orthonormal bases such that the induced Fubini-Study metrics pull back under  $\phi\colon X\hookrightarrow \mathbb P\times \mathbb G$ to give $(h^k,H\otimes h^k)$.

\begin{theorem}
Fix $\alpha$ and suppose $(h_k,H_k)$ is a sequence of hermitian metrics on $L$ and $E$ respectively which converges (in $C^{\infty}$ say) to $(h,H)$ as $k$ tends to infinity. Suppose furthermore that $(h_k,H_k)$ is $\alpha$-balanced with respect to $k$ for $k$ sufficiently large. Then
\begin{enumerate}
\item For general choice of $\alpha$, the limit satisfies the equations
  \begin{equation}
i \Lambda F_H = \lambda \Id, \qquad S_\omega = \hat S\label{eq:uncoupledintro}
\end{equation}
where $S_{\omega}$ denotes the scalar curvature of $\omega$, $\Lambda$ denotes the trace with respect to $\omega$ and $\lambda$, $\hat S$ are topological constants.
\item For special choice of $\alpha$,  the limit  satisfies the equations
\begin{equation}\label{eq:coupledalpha1intro}
i\(\Lambda F_H - \frac{1}{r}\Lambda\tr F_H\Id\) + \(\frac{1}{2}S_{\omega} - \frac{\beta_1}{\beta_0}i\Lambda \tr F_H\)\Id = \lambda' \Id,
\end{equation}
\begin{equation}\label{eq:coupledalpha2intro}
(\Delta - 4\lambda')i\Lambda \tr F_H - \tr\Lambda^2(F_H^2 + F_H \wedge \tr R_\omega) - \kappa\Lambda^2 (\tr F_H)^2 = c,
\end{equation}
where $i\tr R_\omega$ is the Ricci-curvature of $\omega$ and $\lambda',\beta_i,\kappa,c$ are topological quantities that depend on $\alpha$ \eqref{eq:definitionoftopologicalconstants}.
\end{enumerate}

\end{theorem}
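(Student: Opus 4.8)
The plan is to follow the standard strategy in the spirit of Donaldson's work on balanced metrics: the $\alpha$-balanced condition is an algebraic identity at each finite $k$, and passing to the limit in a suitable asymptotic expansion yields the differential equations. Concretely, the $\alpha$-balanced condition with respect to $k$ says that for an orthonormal basis $\{s_i\}$ of $H^0(L^{\otimes k})$ (with respect to the $L^2$-inner product defined using $h_k^k$ and $dV_1/\int_X dV_1$) one has $\sum_i |s_i|^2_{h_k^k} \equiv \mathrm{const}$, and likewise for an orthonormal basis $\{\sigma_a\}$ of $H^0(E\otimes L^{\otimes k})$ one has $\sum_a \sigma_a \otimes \sigma_a^{*_{H_k\otimes h_k^k}} \equiv \mathrm{const}\cdot \Id_E$. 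These two sums are precisely the Bergman kernels (density of states functions) $\rho_k(h_k)$ of $L^{\otimes k}$ and $\rho_k(H_k, h_k)$ of $E\otimes L^{\otimes k}$, computed with respect to the volume forms $dV_1, dV_2$ respectively, and the balanced condition forces them to be constant.

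**Next I would** invoke the asymptotic expansions. For the line bundle part one uses the Tian-Zelditch-Catlin-Lu expansion, but adapted to the non-standard volume form $dV_1$; since $dV_1 = \sum_{p} p\alpha_p \omega_1^{p-1}\wedge\omega_2^{n+1-p}$ and $\omega_1 = k\omega$, $\omega_2 = rk\omega + \tr F_H$, the leading behaviour is $dV_1 \sim (\text{const})\,k^{n}\omega^n$ with lower-order corrections involving $\tr F_H$ (this is where the hypothesis \eqref{eq:betasnonzero} guaranteeing $dV_j\neq 0$ enters, and where the $\beta_i$ are defined). For the bundle part one uses the density of states expansion for $E\otimes L^{\otimes k}$, whose top two terms I expect to have been computed earlier in the paper as advertised in the abstract: schematically
\begin{equation*}
\rho_k(H,h) = A_0 k^{n} \Id + k^{n-1}\Bigl(A_1 \Id + A_2\, i\Lambda F_H + A_3 S_\omega \Id + \cdots\Bigr) + O(k^{n-2}),
\end{equation*}
with analogous but longer expressions at order $k^{n-2}$ carrying the quadratic curvature terms $\tr\Lambda^2 F_H^2$, $F_H\wedge\tr R_\omega$ and $(\tr F_H)^2$. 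Because $(h_k,H_k)\to(h,H)$ in $C^\infty$, one may substitute $(h_k,H_k)$ into these expansions and take $k\to\infty$.

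**Then** the argument splits according to $\alpha$. Setting $\rho_k(h_k) \equiv \mathrm{const}$ and $\rho_k(H_k,h_k)\equiv\mathrm{const}\cdot\Id$ and extracting the coefficient of each power of $k$ gives a hierarchy of equations in the limit. The coefficient of the top power is automatically satisfied (it fixes the normalising constants, which are the topological quantities $\lambda,\hat S,\lambda',c$, etc., computed by integrating over $X$). For \textbf{general} $\alpha$, the subleading ($k^{n-1}$) coefficient of the bundle expansion gives $i\Lambda F_H + (\text{scalar})\Id = \mathrm{const}\cdot\Id$; combining the trace part with the line-bundle $k^{n-1}$ coefficient (which is the Tian expansion term $\tfrac{1}{2}S_\omega$) decouples the system into $i\Lambda F_H = \lambda\Id$ and $S_\omega = \hat S$ — equation \eqref{eq:uncoupledintro}. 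For \textbf{special} $\alpha$, the particular value of $\alpha$ is chosen so that the $k^{n-1}$ coefficients of the two balanced conditions become \emph{proportional}, so that the $k^{n-1}$ equation no longer forces $i\Lambda F_H$ and $S_\omega$ separately but only the combination in \eqref{eq:coupledalpha1intro}; one must then descend to the $k^{n-2}$ coefficient to get a second, independent equation, which after using \eqref{eq:coupledalpha1intro} and the Kähler identities (to rewrite $\Delta$-terms and the Ricci form $i\tr R_\omega$) yields \eqref{eq:coupledalpha2intro}.

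**The main obstacle** will be the second-order computation: producing \eqref{eq:coupledalpha2intro} requires knowing the $k^{n-2}$ term of the density of states expansion of $E\otimes L^{\otimes k}$ for the twisted volume form $dV_2$, and then carefully bookkeeping the interaction between the $\alpha$-dependent normalisation (which itself has a $\tr F_H$-correction at order $k^{n-1}$, contributing to order $k^{n-2}$ after expansion) and the geometric second-order terms. Tracking that $dV_2$ is not $\omega^n$ but carries curvature corrections, and feeding the already-established first-order equation \eqref{eq:coupledalpha1intro} back in to simplify, is the delicate part; the rest is a matter of organising the expansion coefficients and reading off the identities, together with computing the topological constants by integration.
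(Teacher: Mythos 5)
Your plan follows the paper's proof essentially verbatim: the balanced condition is recast as constancy of the two Bergman functions for the $\alpha$-dependent volume forms $dV_1,dV_2$, the uniform Bergman expansion (including the $B_2$ term of Theorem \ref{th:2}, with the volume-form corrections absorbed as a conformal factor) is applied to the convergent sequence $(h_k,H_k)$, and your general/special dichotomy is exactly the paper's case split on $\beta_1(\beta_0+r\gamma_0)$, with the coupled scalar equation obtained by descending to order $k^{n-2}$ once the first-order coefficients are proportional. The one precision to add is the content of Lemma \ref{lem:fundamentallimittriple}: one cannot literally read off ``a hierarchy of equations'' at every power of $k$, since the coefficients depend on $k$; the descent to order $k^{n-2}$ is legitimate only because the proportionality $\tr B_{1,k}=r\,b_{1,k}$ holds identically for every $k$ (which is exactly what $\beta_1(\beta_0+r\gamma_0)=0$ guarantees, via $\gamma_0-r\gamma_1=\beta_1$), so that the $k^n$ and $k^{n-1}$ terms cancel exactly and the $O(1/k)$ errors at first order cannot contaminate the second-order comparison.
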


Motivated by the work of Luo \cite{Luo}, Wang \cite{W1} and Donaldson \cite{D1},  it is natural to conjecture that the existence of solutions of \eqref{eq:uncoupledintro} implies the existence of balanced metrics and the asymptotic stability of the triple $(X,L,E)$, at least for general $\alpha$ which give rise to a positive linearisation.   The confirmation of this conjecture would lead to a physical interpretation of the GIT moduli space of semistable triples when $c_1(X) =0$, $c_1(E) = 0$, $c_2(X) = c_2(E)$, as in this case a solution of \eqref{eq:uncoupledintro} corresponds to a compactification of Heterotic String Theory in the large volume limit \cite{St}. Constructions of moduli spaces of triples using GIT have been carried out by Gieseker-Morrison \cite{GM} and Pandharipande \cite{Pd} for the case of curves, but apparently there is no such construction for higher dimensional $X$. It is worth to point out that, for smooth curves, GIT stability of a triple in the sense of~\cite{GM,Pd} is equivalent to the stability of the vector bundle over the curve, as expected from Theorem 1.1 (1) (see \cite[Theorem~1.1, Proposition~8.2.1]{Pd}).

We expect the analysis of the equivalent question for the system given by \eqref{eq:coupledalpha1intro} and \eqref{eq:coupledalpha2intro} to be more delicate. The first equation in (2) is an endomorphism valued equation that can be reduced to the Hermitian-Einstein equation after a conformal change; the second is a scalar equation relating the metric on the base to a second order quantity in the curvature of the metric on the bundle. These equations are coupled when $\beta_1 \neq 0$ and of a similar form to those studied by the first author jointly with Luis Alvarez-Consul and Oscar Garcia-Prada \cite{AGG,GF}, although they are not precisely the same.  In fact, one of the main differences between these and \cite{AGG,GF} is that the equations here are invariant under rescaling of the metric for each fixed $\alpha \in\RR^{n+2}$ and depend strongly on the topologies of $X$, $L$ and $E$. We hope to address the analysis of the set of $\alpha$'s which lead to positive linearisations and produce coupled equations in a future work. \medskip

Our method of proof is based on that of Donaldson \cite[Theorem~2]{D1} which is outlined at the start of \secref{sec:ProofTheorem}.  This in turn relies on the density of states expansion of the Bergman kernel \cite{C1,F1,MM1,T1,Y1,Z1}.  For our purpose we need to know the top two terms of this expansion in the case of twisting by a vector bundle.
\begin{theorem}\label{th:2}
  Let $\{s_j\}$ be a basis for $H^0(E\otimes L^{\otimes k})$ that is orthonormal with respect to the fibre metric $H\otimes h^k$ and the fixed volume form $\omega^n/n!$.  Then the Bergman kernel $B_k= \sum_{i} s_j s_j^*$ has a $C^\infty$ asymptotic expansion
$$ (2\pi)^nB_k = B_0 k^n + B_1 k^{n-1} + B_2 k^{n-2} + O(k^{n-3})$$
where $B_i$ are smooth functions on $X$ depending on $H$ and $h$. In particular,
$$ B_0 = Id, \quad B_1 = i\Lambda F_H + \frac{S_{\omega}}{2}Id\quad \text{and}$$
\begin{equation*}
  \begin{split}
    B_2 &=  -\frac{1}{2}\Delta_{\dbar}(i\Lambda F_H) - \frac{1}{2}\Lambda F_H \Lambda F_H - \frac{1}{2}
(F_H)_{j,\overline{k}}(F_H)_{k,\overline{j}}\\
& + \frac{1}{2}S_\omega i \Lambda F_H - \frac{1}{2}(F_H)_{j,\overline{k}}\tr(R_\omega)_{k,\overline{j}}\\
& - \frac{1}{6}\Delta(S_\omega)\Id + \frac{1}{24}\(|R_\omega|^2 - 4|\tr R_\omega|^2 + 3 S_\omega^2\)\Id.
  \end{split}
\end{equation*}
\end{theorem}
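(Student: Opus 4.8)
\emph{Proof strategy.} The plan is to adapt the peak-section method for Bergman kernel asymptotics of Tian, Zelditch and Lu (see \cite{T1}) to sections of $E\otimes L^{\otimes k}$; the same coefficients can also be extracted from the analytic localisation of the Kodaira Laplacian as in \cite{MM1}. First I would localise: fix $x\in X$ and recall that, by the standard Gaussian off-diagonal decay of the Bergman kernel (from the H\"ormander--Demailly $L^2$ estimates, or from \cite{MM1}), the value $B_k(x)\in\End(E_x)$ is determined by the germ at $x$ of the data $(\omega,h,H,E)$ up to an error $O(k^{-\infty})$, so one may work in a fixed coordinate ball. There I would choose good local data: K\"ahler normal coordinates $z=(z^1,\dots,z^n)$ centred at $x$, so that a local K\"ahler potential of $\omega$ has the form $\varphi=|z|^2+(\text{terms of order}\ge 3)$ with the higher Taylor coefficients expressed through $R_\omega$ and its covariant derivatives at $x$; a trivialisation of $L^{\otimes k}$ by the frame with pointwise norm $e^{-k\varphi}$; and a holomorphic frame $(e_1,\dots,e_r)$ of $E$ adapted to $H$, i.e. with $H(e_\mu,e_\nu)=\delta_{\mu\nu}+(\text{quadratic in }z\text{ given by }F_H(x))+(\text{order}\ge 3)$ and higher coefficients governed by $F_H$ and its derivatives. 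For the three leading coefficients these expansions are needed to order four in $z$.

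Next I would construct peak sections: for each multi-index $p\in\mathbb{N}^n$ with $|p|\le N$ (taking $N=4$ suffices) and each frame index $\mu$, apply Tian's peak-section lemma — correcting the approximately holomorphic model $z^p e_\mu\otimes(\text{local frame of }L^{\otimes k})$, cut off near $x$, by solving $\dbar$ with the $L^2$ estimate — to obtain $S_{p,\mu}\in H^0(E\otimes L^{\otimes k})$. Then I would compute the Gram matrix $\langle S_{p,\mu},S_{q,\nu}\rangle$, taken with respect to $H\otimes h^k$ and the fixed volume $\omega^n/n!$, by expanding the integrand via the Taylor data above and evaluating the resulting Gaussian integrals $\int_{\CC^n}z^p\bar z^q e^{-k|z|^2}(\cdots)$: the leading term is $\delta_{pq}\delta_{\mu\nu}$ times $\pi^n p!\,k^{-n-|p|}$, and the subleading terms collect the Taylor coefficients of $\varphi$ and of the frame of $E$ to the required order in $k^{-1}$.

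Finally I would orthonormalise $\{S_{p,\mu}\}$ by Gram--Schmidt and restrict to the diagonal: since $z^p$ vanishes at $x$ for $|p|>0$, the value $B_k(x)=\sum_j s_j(x)\otimes s_j(x)^*$ is built from the $|p|=0$ sections, whose normalisation involves, through Gram--Schmidt, their inner products with the $|p|\le N$ sections. Collecting contributions order by order in $k$ gives $B_0=\Id$ (the leading Gaussian integral), $B_1=i\Lambda F_H+\tfrac12 S_\omega\Id$ (the quadratic coefficients of $H$, respectively $\varphi$), and $B_2$ from the quartic and cross Taylor coefficients: the part proportional to $\Id$, namely $-\tfrac16\Delta S_\omega+\tfrac1{24}(|R_\omega|^2-4|\tr R_\omega|^2+3S_\omega^2)$, reproduces Lu's untwisted computation, while the genuinely new terms $-\tfrac12\Delta_{\dbar}(i\Lambda F_H)$, $-\tfrac12\Lambda F_H\Lambda F_H$, $-\tfrac12(F_H)_{j,\overline k}(F_H)_{k,\overline j}$, $\tfrac12 S_\omega\,i\Lambda F_H$ and $-\tfrac12(F_H)_{j,\overline k}\tr(R_\omega)_{k,\overline j}$ come from the frame expansion of $E$ and its cross terms with $\varphi$.

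I expect the main obstacle to be the bookkeeping in this last step: ensuring the peak-section error terms stay below $O(k^{n-3})$ after summing over $p$ and $\mu$, tracking the Gram--Schmidt corrections to the order needed for the $k^{n-2}$ coefficient, and — the part with no analogue in the scalar case — correctly collecting the cross terms between the curvature $R_\omega$ of the base and the curvature $F_H$ of $E$, which is the source of $-\tfrac12(F_H)_{j,\overline k}\tr(R_\omega)_{k,\overline j}$ and of the couplings. As a consistency check, specialising to $r=1$, $E=\cO$, $F_H=0$ the formula must reduce to the classical expansion $a_0=1$, $a_1=\tfrac12 S_\omega$, $a_2=-\tfrac16\Delta S_\omega+\tfrac1{24}(|R_\omega|^2-4|\tr R_\omega|^2+3S_\omega^2)$; matching there also pins down the normalising factor $(2\pi)^n$.
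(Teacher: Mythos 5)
Your strategy is sound and would reproduce the theorem, but it is a genuinely different route from the one taken in the paper. Here the existence of the $C^\infty$ expansion is quoted from the general theory (Theorem \ref{th:Bergman}, via \cite{MM1}), and the coefficients $B_1,B_2$ are then extracted from the Berman--Berndtsson--Sj\"ostrand recursion \eqref{eq:Recursive}: one Taylor-expands the amplitudes $\Delta_0$ and $\Delta_G'$ along the diagonal (Lemmas \ref{lem:bergman0}--\ref{lem:bergman2}) and evaluates $D_\theta\cdot D_y$ and $(D_\theta\cdot D_y)^2$ at $x=y$, so the whole computation is a closed algebraic recursion in the almost-holomorphic extensions $\psi$, $G$, with no peak sections and no Gram--Schmidt bookkeeping. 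What you propose is instead the Tian--Lu peak-section method twisted by $E$, which is essentially the route of L.~Wang's thesis \cite{WangLi} (the paper notes in Remark \ref{rem:notationdiff} that $B_2$ was obtained there independently); its advantage is that each term of $B_2$ is tied directly to Taylor coefficients of $\varphi$ and of the adapted frame of $E$ in normal coordinates, while the BBS recursion buys a much shorter and less error-prone bookkeeping, which is precisely why the paper adopts it. Your overall architecture (existence and localisation from \cite{MM1}/H\"ormander estimates, coefficients identified pointwise on the diagonal) matches standard practice and is compatible with the $C^\infty$ and uniform-in-metrics statement the paper needs later.

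One quantitative caveat you should fix before carrying this out: expanding the data ``to order four in $z$'' is not enough for $B_2$. Since $B_2$ contains $\Delta S_\omega$ (two derivatives of curvature, i.e.\ sixth-order Taylor data of the K\"ahler potential in normal coordinates) and $\Delta_{\dbar}(i\Lambda F_H)$ (fourth-order data of the frame of $E$), you need $\varphi$ to order six and the hermitian frame to order four, and correspondingly you must track which Gram-matrix entries $\langle S_{0,\mu},S_{p,\nu}\rangle$ with $|p|\leq 3$--$4$ feed into the $k^{n-2}$ coefficient after Gram--Schmidt. This is a parameter adjustment rather than a structural flaw, but with the stated truncation the terms $-\tfrac{1}{6}\Delta S_\omega$ and $-\tfrac{1}{2}\Delta_{\dbar}(i\Lambda F_H)$ would simply not appear.
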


\begin{remark}
After this work was completed we noticed that the $B_2$ term has been calculated independently by L. Wang \cite{WangLi} and H. Xu \cite{Xu} although using different methods (see Remark \ref{rem:notationdiff} about the different notations).
\end{remark}

The important part of this theorem for our purpose is the explicit equation for the $B_2$ term, which follows from the recursion method of Berman-Berndtsson-Sj\"{o}strand \cite{BeBoSJ} after an elementary, but laborious, calculation (see Lu \cite{Lu} for an alternative approach in the non-twisted case).  As far as we can see, the contributions from $B_2$ is the only way that one can relate stability to equations that are second order in the curvature of the bundle, and since this appears naturally among coupled equations we expect this result to be of independent interest.\medskip

\noindent{\bf Outline:} We start in \secref{sec:prelim} with preliminaries and the definition of $\alpha$-balanced metrics.  This is put into an infinite dimensional gauge-theoretic context in \secref{sec:infinite}.  The computation of the Bergman Kernel is contained in \secref{sec:Bergman}, and the proof of the theorem on limits of balanced metrics in \secref{sec:ProofTheorem}. \medskip

\noindent {\bf Acknowledgements:} We wish to thank Luis Alvarez-Consul, Oscar Garcia-Prada, Julien Keller and Richard Thomas for useful discussions.

\section{Balanced conditions for bundles and polarised manifolds}\label{sec:prelim}

\subsection{Preliminaries}
\label{subsec:Finite-dim1}
In this section we study Hamiltonian actions on the space of embeddings of a smooth manifold into a product of symplectic manifolds. When the source manifold is complex and the target manifolds are K\"ahler, we will endow the space of holomorphic embeddings with a family of closed $(1,1)$-forms that will be used in \secref{subsec:balancedemb} to define balanced conditions.

Let $X$ be a $2n$-dimensional real smooth compact manifold, and $(S_1,\omega_1)$ and $(S_2,\omega_2)$ be two symplectic manifolds of dimensions $m_1$ and $m_2$, with $m_1 + m_2 > 2n$. Consider the product $S = S_1 \times S_2$ and let
$$
\cS \subset C^\infty(X,S).
$$
be the space of smooth embeddings $\phi\colon X \to S$. Given integers $p,q$, define a closed $2q$-form on $S$ by
\begin{equation}\label{equation:sigmabalanced}
\sigma_{p,q} = \omega_1^p \wedge\omega_2^{q-p},
\end{equation}
(so by convention $\sigma_{p,q}=0$ if $p<0$ or $p>q$).  Here and in the sequel, we omit pull-backs to simplify the notation when there is no possible confusion.
Now fix a non-negative integer $0 \leq p \leq n+1$. Then, $\sigma_{p,n+1}$ induces a $2$-form $\Omega_p$ on $\cS$ defined by
\begin{equation}\label{eq:Omegasigma}
\Omega_p(V_1,V_2) = \int_X V_2\lrcorner(V_1 \lrcorner \sigma_{p,n+1}),
\end{equation}
where $V_j \in T_\phi\cS \cong \Gamma(X,\phi^*TS)$. A straightforward computation shows that
$$
d\Omega_p(V_1,V_2,V_3) = \int_X V_3\lrcorner(V_2 \lrcorner(V_1 \lrcorner d\sigma_{p,n+1})) = 0,
$$
and therefore $\Omega_p$ is closed.

Let $\cH_j$ be the group of Hamiltonian symplectomorphisms of $(S_j,\omega_j)$. Then the group
$$
\cH = \cH_1 \times \cH_2
$$
acts on $\cS$ on the right by composition, and this action preserves $\Omega_p$.  We note that $\Omega_p$ may not be positive definite, and thus not define a symplectic form, but nevertheless seek to find a moment map for the $\cH$-action on $(\cS,\Omega_p)$; that is an equivariant map
\begin{equation}\label{eq:mmap}
\mu_p\colon \cS \to \LieH^*
\end{equation}
which satisfies
\begin{equation}\label{eq:mmapcondition}
d \langle \mu,f\rangle = Y_f \lrcorner \Omega_p
\end{equation}
for any $f =(f_1,f_2) \in \LieH$, where $Y_f$ denotes the infinitesimal action of $f$ on $\cS$.  To ease the computations we identify $\LieH_j$ with $C^\infty_0(S_j)$, the smooth functions with zero integral with respect to $\omega_j^{m_j}$.
\begin{lemma}\label{lem:mup}
There exists a moment map for the $\cH$-action on $(\cS,\Omega_p)$, given by
\begin{equation}\label{eq:Omegasigmammap}
\langle\mu_p(\phi),f\rangle = p\int_X \phi^* f_1 \sigma_{p-1,n} + (n+1-p)\int_X
\phi^* f_2 \sigma_{p,n}.
\end{equation}
\end{lemma}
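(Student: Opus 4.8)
The plan is to verify \eqref{eq:mmapcondition} directly: evaluate both sides of $d\langle\mu_p,f\rangle=Y_f\lrcorner\Omega_p$ on an arbitrary tangent vector $V\in T_\phi\cS\cong\Gamma(X,\phi^*TS)$, check that the two expressions coincide, and then read off equivariance from the explicit formula. First I fix consistent sign conventions: for $f_j\in C^\infty_0(S_j)$ let $X_{f_j}$ be its Hamiltonian vector field on $S_j$ (say $X_{f_j}\lrcorner\omega_j=df_j$), and recall that, since $\cH=\cH_1\times\cH_2$ acts on $\cS$ through its action on $S=S_1\times S_2$ by composition, the infinitesimal action $Y_f$ of $f=(f_1,f_2)$ at $\phi$ is the restriction along $\phi$ of the field $X_{f_1}+X_{f_2}$, regarded as a section of $\phi^*TS\cong\phi^*TS_1\oplus\phi^*TS_2$ (up to an overall sign that depends on the left/right convention and must be matched with the convention for \eqref{eq:mmapcondition}).

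Next I reduce $Y_f\lrcorner\Omega_p$ to an explicit fibre integral over $X$. Since $\omega_1$ (resp.\ $\omega_2$) is pulled back from the first (resp.\ second) factor, the cross-contractions $X_{f_1}\lrcorner\omega_2$ and $X_{f_2}\lrcorner\omega_1$ vanish; using that interior product is an anti-derivation, that $\omega_j^p$ has even degree (so the sign factors $(-1)^{2p}$ are trivial), and the relation $X_{f_j}\lrcorner\omega_j^p=p\,df_j\wedge\omega_j^{p-1}$, one finds
\begin{equation*}
Y_f\lrcorner\sigma_{p,n+1}=p\,df_1\wedge\sigma_{p-1,n}+(n+1-p)\,df_2\wedge\sigma_{p,n},
\end{equation*}
where $\sigma_{p-1,n}=\omega_1^{p-1}\wedge\omega_2^{n+1-p}$ and $\sigma_{p,n}=\omega_1^p\wedge\omega_2^{n-p}$, and where the degree convention makes this valid also at the endpoints $p=0$ and $p=n+1$. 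Contracting once more with $V$ and integrating over the closed manifold $X$ then shows that $(Y_f\lrcorner\Omega_p)(V)$ equals
\begin{equation*}
p\int_X V\lrcorner(df_1\wedge\sigma_{p-1,n})+(n+1-p)\int_X V\lrcorner(df_2\wedge\sigma_{p,n}).
\end{equation*}

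Then I compute the left-hand side by differentiating the candidate \eqref{eq:Omegasigmammap} along a path $\phi_t$ in $\cS$ with $\phi_0=\phi$ and $\tfrac{d}{dt}\big|_0\phi_t=V$. The key tool is the variation-of-pull-back identity $\tfrac{d}{dt}\big|_0\phi_t^*\alpha=\phi^*(V\lrcorner d\alpha)+d\big(\phi^*(V\lrcorner\alpha)\big)$ for a form $\alpha$ on $S$, obtained by applying Cartan's formula to the pull-back of $\alpha$ under $X\times(-\varepsilon,\varepsilon)\to S$, $(x,t)\mapsto\phi_t(x)$ (here $V\lrcorner$ is contraction of a form on $S$ with the section $V$ of $\phi^*TS$, and the pull-back symbol restricts the resulting form along $\phi$ to $X$). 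Applying this with $\alpha=f_1\sigma_{p-1,n}$ and $\alpha=f_2\sigma_{p,n}$, the exact term integrates to zero over $X$ by Stokes, and since $\sigma_{p-1,n}$ and $\sigma_{p,n}$ are closed one has $d(f_1\sigma_{p-1,n})=df_1\wedge\sigma_{p-1,n}$ and $d(f_2\sigma_{p,n})=df_2\wedge\sigma_{p,n}$; therefore $d\langle\mu_p(\phi),f\rangle(V)$ equals
\begin{equation*}
p\int_X V\lrcorner(df_1\wedge\sigma_{p-1,n})+(n+1-p)\int_X V\lrcorner(df_2\wedge\sigma_{p,n}),
\end{equation*}
which is exactly $(Y_f\lrcorner\Omega_p)(V)$. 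This establishes \eqref{eq:mmapcondition}. Equivariance of $\mu_p$ for the $\cH$-action on $\cS$ and the coadjoint action on $\LieH^*$ then follows immediately from the explicit formula: each $g_j$ is a symplectomorphism of $(S_j,\omega_j)$, so $g^*\sigma_{p-1,n}=\sigma_{p-1,n}$ and $g^*\sigma_{p,n}=\sigma_{p,n}$, and replacing $\phi$ by $g\circ\phi$ sends $\phi^*f_j$ to $\phi^*(f_j\circ g_j)$.

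I expect the main difficulty to be bookkeeping rather than substance: one must carefully track the suppressed pull-backs (each $\sigma_{p,q}$ in the integrals is the $\phi$-pull-back of a form on $S$ that becomes top-degree on the $2n$-manifold $X$ only after a contraction lowering its degree), and one must pin down the sign conventions for the Hamiltonian vector fields and for the infinitesimal action so that \eqref{eq:mmapcondition} holds with the stated sign rather than its negative. Nondegeneracy of $\Omega_p$ is never used, so the fact that $\Omega_p$ need not be symplectic causes no trouble.
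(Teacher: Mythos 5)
Your proposal is correct and follows essentially the same route as the paper: identify $Y_f$ with the pullback of the Hamiltonian field of $f_1+f_2$ on $(S,\omega_1+\omega_2)$, contract into $\sigma_{p,n+1}$ using the product structure to get $p\,df_1\wedge\sigma_{p-1,n}+(n+1-p)\,df_2\wedge\sigma_{p,n}$, and match this with the first variation of the candidate formula (the paper compresses the Cartan-formula-plus-Stokes step you spell out into a single equality). No gaps; your explicit treatment of the variation-of-pullback identity and of equivariance only makes precise what the paper leaves implicit.
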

\begin{proof}
Note first that $Y_f$ is given by the pull-back $\phi^*y_f \in \Gamma(X,\phi^*TS)$, where $y_f$ denotes the Hamiltonian vector field of $f_1 + f_2$ on $(S,\omega_1 + \omega_2)$. Then, given an arbitrary $V \in \Gamma(X,\phi^*TS)$ we have that $\Omega_p(Y_f,V)$ equals
\begin{equation*}
\begin{split}
p&\int_X \phi^*\(V \lrcorner \((y_f \lrcorner \omega_1) \wedge \sigma_{p-1,n}\)\) + (n+1-p)\int_X \phi^* \(V \lrcorner \((y_f \lrcorner \omega_2) \wedge \sigma_{p,n}\)\)\\
& = p\int_X \phi^*\(V \lrcorner d\(f_1 \sigma_{p-1,n}\)\)+ (n+1-p)\int_X \phi^* \(V \lrcorner d\(f_2\sigma_{p,n}\)\)\\
& = d\langle\mu_p,f\rangle(V).
\end{split}
\end{equation*}
as required.
\end{proof}
Suppose now that $X$ is a complex manifold and that $\omega_1$ and $\omega_2$ are K\"ahler. Then, the product complex structure on $S$ induces a complex structure on the space of holomorphic embeddings $\cS_h$. We can use the forms $\Omega_p$ to define a family of closed $(1,1)$-forms on $\cS_h$. For this, expanding $\Omega_p$ as above and using that $\omega_1$ and $\omega_2$ are of type $(1,1)$ one can easily see that \eqref{eq:Omegasigma} defines a $(1,1)$-form on $\cS_h$. Hence, given a parameter $\alpha = (\alpha_0,\ldots,\alpha_{n+1})\in \RR^{n+2}$ we define the family of closed $(1,1)$-forms by
\begin{equation}\label{eq:Omegaalpha}
\Omega_\alpha = \sum_{p=0}^{n+1}\alpha_p\Omega_p.
\end{equation}
As an immediate consequence of Lemma~\ref{lem:mup} we obtain the following.
\begin{proposition}\label{propo:mualpha}
The $\cH$-action on $(\cS,\Omega_\alpha)$ is Hamiltonian with moment map
\begin{equation}\label{eq:mualpha}
\mu_\alpha = \sum_{p=0}^{n+1}\alpha_p\mu_p.
\end{equation}
\end{proposition}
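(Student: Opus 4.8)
The plan is to deduce the Proposition directly from Lemma~\ref{lem:mup}, exploiting the fact that the defining equation \eqref{eq:mmapcondition} for a moment map is linear in the closed $(1,1)$-form. The first thing to notice is that the infinitesimal action $Y_f$ of $f = (f_1,f_2) \in \LieH$ on $\cS$ comes from the $\cH$-action by composition and so is independent of the integer $p$; hence the \emph{same} vector field $Y_f \in T_\phi\cS$ appears in the moment map equation for every $\Omega_p$, $0 \leq p \leq n+1$.

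Fixing $\alpha = (\alpha_0,\dots,\alpha_{n+1}) \in \RR^{n+2}$ and setting $\mu_\alpha = \sum_{p=0}^{n+1}\alpha_p\mu_p$ as in \eqref{eq:mualpha}, I would then simply take the $\alpha_p$-weighted sum of the identities supplied by Lemma~\ref{lem:mup}: for any $f \in \LieH$,
\begin{equation*}
d\langle\mu_\alpha,f\rangle = \sum_{p=0}^{n+1}\alpha_p\, d\langle\mu_p,f\rangle = \sum_{p=0}^{n+1}\alpha_p\,\bigl(Y_f\lrcorner\Omega_p\bigr) = Y_f\lrcorner\Omega_\alpha,
\end{equation*}
the last step being the definition \eqref{eq:Omegaalpha} of $\Omega_\alpha$ together with linearity of the contraction $Y_f \lrcorner(-)$. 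This is exactly condition \eqref{eq:mmapcondition} for $\mu_\alpha$ on $(\cS,\Omega_\alpha)$; I note it is meaningful even though $\Omega_\alpha$ need not be positive definite, so negative weights $\alpha_p$ cause no trouble.

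Finally I would check equivariance. Because the coadjoint action of $\cH$ on $\LieH^*$ is linear, any real-linear combination of $\cH$-equivariant maps $\cS \to \LieH^*$ is again $\cH$-equivariant; since each $\mu_p$ is equivariant by Lemma~\ref{lem:mup}, so is $\mu_\alpha$. (Equivariance can also be read straight off the explicit formula \eqref{eq:Omegasigmammap}, since $\langle\mu_\alpha(\phi\circ g),f\rangle$ is obtained from $\langle\mu_\alpha(\phi),\Ad_{g}f\rangle$ by the change of variables in the integrals over $X$ induced by $g \in \cH$.) Together with the display above this shows $\mu_\alpha$ is a moment map for the $\cH$-action on $(\cS,\Omega_\alpha)$.

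There is essentially no serious obstacle here: the entire content lies in Lemma~\ref{lem:mup}, and the Proposition is the routine observation that the construction of a moment map is linear in the closed two-form. The only point worth a line of care is that ``moment map'' is used throughout in the weak sense of \eqref{eq:mmapcondition}, without assuming $\Omega_\alpha$ is symplectic; this is what legitimises forming arbitrary linear combinations over $\alpha \in \RR^{n+2}$.
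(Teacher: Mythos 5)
Your argument is correct and is exactly what the paper has in mind: the paper states the Proposition as an immediate consequence of Lemma~\ref{lem:mup}, relying precisely on the linearity of the moment map condition \eqref{eq:mmapcondition} in the closed $2$-form, which you have simply spelled out together with the (equally routine) equivariance check.
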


The form $\Omega_\alpha$ is the curvature of a natural line bundle on $\cS_h$ that can be constructed by the Deligne Pairing.   Suppose that $\omega_i$ is the curvature of a hermitian metric on a line bundle $L_i$ over $S_i$.   Then, following \cite{PhSt}, let
\begin{equation}\label{eq:universalSh}
\mathcal U= \{(x,\phi) \in S\times \cS_h \colon x \in \phi(X)\}
\end{equation}
be the universal family.    Let $\pi\colon \mathcal U\to S$ the projection onto the first factor and consider the line bundle on $\cS_h$ given by the Deligne pairing
$$
\cL_p = \langle \underbrace{\pi^*L_1, \ldots, \pi^*L_1}_{p\, \mbox{\tiny times}}, \underbrace{\pi^*L_2,\ldots, \pi^*L_2}_{n+1-p\,\, \mbox{\tiny times }}\rangle.
$$
This pairing was introduced in \cite{D}, but see also \cite{PhSt} or \cite{Zh} for details.  Then, on the smooth locus of $\cS_h$,  there is a natural hermitian metric on $\cL_p$ with curvature given by
$$
\int_{\mathcal U/\cS_h}\omega_1^p\wedge\omega_2^{n+1-p} = \Omega_p
$$
and so $\Omega_\alpha$ is the curvature of the induced hermitian metric on the line bundle
$$
\cL_\alpha = \cL_0^{\alpha_0} \otimes \ldots \otimes \cL_{n+1}^{\alpha_{n+1}}.
$$

\begin{remark}
The set of $\alpha$ for which \eqref{eq:Omegaalpha} defines a K\"ahler form is non-empty. To see this, let $\gamma_1,\gamma_2 \in \RR$ be positive real numbers. Then, the K\"ahler form
$$
\omega_\gamma = \gamma_1 \omega_1 + \gamma_2 \omega_2
$$
on $S$ induces a $2$-form $\Omega_\gamma$ on $\cS$, by replacing $\sigma_{p,n+1}$ with $\omega_\gamma^{n+1}$ in~\eqref{eq:Omegasigma}, which can be expressed as
$$
\Omega_\gamma = \Omega_\alpha
$$
for a suitable choice of $\alpha$. Now, the positivity of $\Omega_\gamma$ follows from the identity
$$
\Omega_\gamma(V,JV) = (n+1)\int_X\omega_\gamma(V,JV)\omega_\gamma^n + (n+1)n\int_X|V \lrcorner\omega_\gamma|^2\omega_\gamma^n \geq 0,
$$
where $J$ denotes the product complex structure on $S$ and $|V \lrcorner\omega_\gamma|^2$ is the norm of the pull-back of $V \lrcorner\omega_\gamma$ to $X$ with respect to the K\"ahler form induced by $\omega_\gamma$.
\end{remark}


\subsection{Balanced embeddings, bundles and polarised manifolds}
\label{subsec:balancedemb}
We next use the family of moment maps considered above to define a notion of $\alpha$-balanced triple $(X,L,E)$.  This notion generalises the notion of balanced polarised manifolds, defined first by Luo~\cite{Luo} and adapted to vector bundles by Wang~\cite{W1,W2}.
Let $X$ be a smooth $n$-dimensional complex manifold. Given positive integers $r,N,M$ we define
$$
S = \PP(\CC^N) \times \mathbb G(r,\CC^M),
$$
where $\PP(\CC^N)$ is the complex projective space of dimension $N-1$ endowed with the Fubini-Study metric $\omega_{P}$ and $\mathbb G(r,\CC^M)$ is the Grassmannian of $r$ dimensional quotients of $\CC^M$ endowed with the Fubini-Study metric $\omega_G$.
Using the same notation as in~\secref{subsec:Finite-dim1}, consider the closed $(1,1)$-form $\Omega_\alpha$ on $\cS_h$ given by \eqref{eq:Omegaalpha}. Since we have a homomorphism
$$
SU(N) \times SU(M) \to \cH_1 \times \cH_2
$$
from the product of the special unitary groups,  the moment map~\eqref{eq:mualpha} induces a moment map for the $SU(N) \times SU(M)$-action on $\cS_h$.  The $\alpha$-balanced embeddings are defined to be the zeros of this moment map
$$
\mu_\alpha\colon \cS_h \to \mathfrak{su}(N)^*\times \mathfrak{su}(M)^*.
$$
\begin{definition}\label{def:balancedemb}
A holomorphic embedding $\phi\colon X \to M$ is $\alpha$-balanced if
$$
\langle\mu_\alpha(\phi),\zeta\rangle = 0,
$$
for all $\zeta \in \mathfrak{su}(N)\times \mathfrak{su}(M)$.
\end{definition}
Taking standard homogeneous coordinates $[Z_1:\ldots:Z_N]$ on $\PP(\CC^N)$ and $A$ on $\mathbb G(r,\CC^M)$, where $A$ denotes an $M \times r$ matrix representing a point on $\mathbb G(r,\CC^M)$, the balanced condition for $\phi \in \cS_h$ is equivalent to (cf.~\cite{Luo,W4})

\begin{equation}\label{eq:balancedcondition}
\begin{split}
\int_X \phi^*\(\frac{ZZ^*}{|Z|^2}\sigma_1\) & = \frac{\int_X \phi^*\sigma_1}{N}\Id,\\
\int_X \phi^*\(A(A^*A)^{-1}A^*\sigma_2\) &= \frac{\int_X \phi^*\sigma_2}{M}\Id,
\end{split}
\end{equation}
where
\begin{equation}
\begin{split}
\sigma_1 & = \sum_{p=1}^{n+1}p\alpha_p\omega_P^{p-1}\wedge\omega_G^{n+1-p},\\
\sigma_2 & = \sum_{p=0}^{n}(n+1-p)\alpha_p\omega_P^{p}\wedge\omega_G^{n-p}.
\end{split}
\end{equation}
Note that $A(A^*A)^{-1}A^*$ measures the pointwise hermitian product of the hyperplane sections on $\mathbb G(r,\CC^M)$ with respect to the Fubini-Study metric (similarly for $ZZ^*/|Z|^2$). \medskip


Now let $L$ be an ample line bundle and $E$ a holomorphic vector bundle over $X$.  For $k \gg 0$ set
\begin{equation}\label{eq:NkMk}
N_k = \dim H^0(L^{\otimes k}) \quad \text{ and } \quad M_k = \dim H^0(E \otimes L^{\otimes k}).
\end{equation}
Any choice of basis $\{t_j\}$ for $H^0(L^{\otimes k})$ for $k\gg 0$ gives an embedding $X\subset \PP(H^0(L^{\otimes k})) \simeq \mathbb P(\CC^{N_k})$.  Analogously, given a basis $\{s_j\}$ for $H^0(E\otimes L^{\otimes k})$ the surjection
$$ \mathbb C^{M_k} \simeq H^0(E\otimes L^{\otimes k}) \twoheadrightarrow E$$
gives an embedding $X\subset \mathbb G(r,\mathbb C^{M_k})$.

\begin{definition}
The triple $(X,L,E)$ is said to be \emph{$\alpha$-balanced with respect to $k$} if there exists basis $\{t_j\}$ of $H^0(L^{\otimes k})$ and $\{s_j\}$ of $H^0(E \otimes L^{\otimes k})$ such that the corresponding embedding in $\PP(\CC^{N_k}) \times \mathbb G(r,\CC^{M_k})$ is $\alpha$-balanced.
\end{definition}
We next characterise the $\alpha$-balanced condition in terms of the existence of balanced metrics. To state this we recall notation from the introduction: let $h,H$ be hermitian metrics on $L$ and $E$ respectively, with curvature forms $-i\omega$ and $F_{H}$.  Set $\omega_{1} = k\omega$ and $\omega_{2} = kr\omega + i\tr F_{H}$ and consider volume forms on $X$ given by
\begin{eqnarray*}
dV_1 &=&  \sum_{p=1}^{n+1} p \alpha_{p+1} \omega_1^{p-1} \wedge \omega_2^{n+1-p},\\
dV_2  &=& \sum_{p=0}^{n} (n+1-p) \alpha_p \omega_1^p \wedge \omega_2^{n-p}.
\end{eqnarray*}
By integrating over $X$ we get an $L^2$ metric on $H^0(L^{\otimes k})$ given by the fibre metric $h^k$ and the volume form $dV_1/\int_XdV_1$. Now, given any orthonormal basis $\{t_j\}$ we define the Bergman function by
\begin{equation}\label{eq:bergmanfunction1}
\rho_k(h,H) = \sum_{j} |t_j|^2
\end{equation}
where the norm on the right is taken fibrewise. Similarly there is an $L^2$ metric on $H^0(E\otimes L^{\otimes k})$ induced by the fibre metric $H\otimes h^k$ and the volume form $dV_2/\int_XdV_2$ and given an orthonormal basis $\{s_j\}$ an associated (endomorphism valued) Bergman function
\begin{equation}\label{eq:bergmanfunction2}
B_k(h,H) = \sum_{j} s_j s_j^{*},
\end{equation}
where the dual is taken with respect to the fibre metric $H\otimes h^k$.  The non-vanishing of $dV_j$ at any point of $X$ will be justified in \secref{sec:ProofTheorem} asymptotically in $k$ under natural assumptions on the parameter $\alpha$, see \eqref{eq:betasnonzero}. The normalisation of the volume forms will be used to compute asymptotic expansions for the relevant Bergman functions (see \eqref{eq:varphijpositive} and Lemma \ref{lem:expansion}).
\begin{proposition}\label{prop:balancedmetrics}
The triple $(X,L,E)$ is $\alpha$-balanced with respect to $k$  if and only if there exists metrics $h$ on $L$ and $H$ on $E$ and orthonormal basis such that the associated Bergman kernels are both constant over $X$; i.e.\
\begin{equation}\label{eq:bergmanconstant}
  \rho_k(h,H) = N_k, \qquad B_k(h,H) = \frac{M_k}{r} \Id.
\end{equation}
\end{proposition}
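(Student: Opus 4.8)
The plan is to translate between the Geometric Invariant Theory notion of balanced --- an embedding $\phi$ satisfying the integral identities \eqref{eq:balancedcondition} --- and the metric notion --- constancy of the Bergman functions \eqref{eq:bergmanfunction1} and \eqref{eq:bergmanfunction2} --- using the familiar dictionary between bases of $H^0(L^{\otimes k})$ and of $H^0(E\otimes L^{\otimes k})$ modulo the unitary groups on the one hand, and hermitian metrics on $L$ and on $E$ together with the embeddings they induce on the other. I would first record this dictionary. A basis $\{t_j\}$ of $H^0(L^{\otimes k})$ gives $\iota\colon X\hra\PP(\CC^{N_k})$ with $\iota^*\cO(1)=L^{\otimes k}$, and the pulled-back Fubini--Study metric $\iota^*h_{FS}$ is characterised by $\sum_j|t_j|^2_{\iota^*h_{FS}}\equiv1$; hence for any metric $h$ on $L$ the curvature $\iota^*\omega_P$ equals $\omega_1$ exactly when $\sum_j|t_j|^2_{h^k}$ is constant on $X$, and when $\{t_j\}$ is moreover $L^2$-orthonormal for $(h^k,dV_1/\int_X dV_1)$ this sum is the Bergman function $\rho_k(h,H)$. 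For the Grassmannian factor one has $\phi^*Q=E\otimes L^{\otimes k}$ with $Q$ the universal quotient bundle, and the pull-back of the quotient metric on $Q$ is characterised by $\sum_j s_js_j^*\equiv\Id$ (norm and adjoint in that metric); hence for any metric $H$ on $E$, $\phi^*\omega_G$ equals $\omega_2$ exactly when $\sum_j s_js_j^*$, computed in $H\otimes h^k$, is a constant multiple of $\Id$, and for an $L^2$-orthonormal basis this endomorphism is $B_k(h,H)$. Both assertions follow from short computations with the evaluation maps.

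With the dictionary in hand the proof is a substitution. Given a balanced embedding $\phi$ built from bases $\{t_j\},\{s_j\}$, I would put $h\defeq(\phi^*h_{FS})^{1/k}$ on $L$ and let $H$ be the metric on $E$ with $H\otimes h^k=\phi^*(\text{quotient metric on }Q)$. By the dictionary $\omega_1=\phi^*\omega_P$ and $\omega_2=\phi^*\omega_G$, so $dV_1=\phi^*\sigma_1$ and $dV_2=\phi^*\sigma_2$, while the entries of $\phi^*\bigl(ZZ^*/|Z|^2\bigr)$ and of $\phi^*\bigl(A(A^*A)^{-1}A^*\bigr)$ are, up to the overall constants fixed by $\sum_j|t_j|^2_{h^k}\equiv\text{const}$ and $\sum_j s_js_j^*\equiv\Id$, the pointwise products $\langle t_i,t_j\rangle_{h^k}$ and $\langle s_i,s_j\rangle_{H\otimes h^k}$. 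Substituting into \eqref{eq:balancedcondition} then shows both that suitable constant rescalings of $\{t_j\}$ and of $\{s_j\}$ are orthonormal for the $L^2$-products attached to $(h^k,dV_1/\int_X dV_1)$ and $(H\otimes h^k,dV_2/\int_X dV_2)$, and that $\rho_k(h,H)=N_k$ and $B_k(h,H)=\tfrac{M_k}{r}\Id$. Conversely, from metrics $h,H$ with $\rho_k(h,H)=N_k$ and $B_k(h,H)=\tfrac{M_k}{r}\Id$, any orthonormal bases yield an embedding $\phi$; constancy of the two Bergman functions gives $\phi^*\omega_P=\omega_1$ and $\phi^*\omega_G=\omega_2$, hence $\phi^*\sigma_j=dV_j$, and orthonormality of the bases then becomes precisely \eqref{eq:balancedcondition}, so $\phi$ is $\alpha$-balanced.

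The conceptual content is modest; the work is in the bookkeeping, and the step I expect to be most delicate is keeping the normalising constants straight. The volume forms $dV_1,dV_2$ are normalised to probability measures, the Fubini--Study metrics carry their own normalisations, and on the Grassmannian one must correctly match the $M_k\times M_k$ rank-$r$ projection $A(A^*A)^{-1}A^*$, of trace $r$, with the endomorphism $\sum_j s_js_j^*$ of $E\otimes L^{\otimes k}$; it is exactly these factors of $N_k$ and of $M_k/r$ that turn ``constant'' into the precise values in \eqref{eq:bergmanconstant}. A secondary point that wants care is that $\phi^*\omega_P=\omega_1$, and likewise $\phi^*\omega_G=\omega_2$, is a consequence of the balanced condition and not an input to it: the pulled-back K\"ahler form differs from $\omega_1$ by $i\partial\bar\partial\log\bigl(\sum_j|t_j|^2_{h^k}\bigr)$, so one genuinely needs the Bergman functions constant before $dV_1$ and $dV_2$ can be identified with $\phi^*\sigma_1$ and $\phi^*\sigma_2$.
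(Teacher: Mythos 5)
Your argument is correct and follows essentially the same route as the paper's proof: pull back the Fubini--Study (resp.\ quotient) metric to define $h$ and $H$, observe $\phi^*\omega_P=\omega_1$ and $\phi^*\omega_G=\omega_2$ so that $dV_j=\phi^*\sigma_j$, and then read the balanced integral conditions \eqref{eq:balancedcondition} as orthonormality of (rescaled) bases together with constancy of the Bergman functions, with the converse obtained by reversing the dictionary. Your extra care about the normalising constants and about the $i\partial\bar\partial\log$ correction in the converse direction simply fills in details the paper leaves implicit.
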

\begin{proof}
For the only if part, denote $\phi = \phi_1\times\phi_2$ and define
\begin{align*}
h &= \phi_1^*h_{P}^{1/k},\\
H &= \phi_2^*H_{G} \otimes h_k^{-l},
\end{align*}
where $h_P$ and $H_G$ denote the Fubini-Study metrics on the universal quotient bundles of $\PP(\CC^{N_k})$ and $\mathbb G(r,\CC^{M_k})$ respectively. Then, by the $\alpha$-balanced condition the pullbacks $\{t_j\}$ and $\{s_j\}$ of the hyperplane sections are orthonormal with respect to the relevant $L^2$-metrics, since $\phi^* \omega_P = k\omega$ and $\phi^*\omega_G = i\tr F_{H_G} = (kr \omega + i\tr F_{H})$, and it is clear that satisfy~\eqref{eq:bergmanconstant} (up to rescaling). The converse is proved similarly.
\end{proof}
\begin{remark}
  Using the above one sees that the balanced condition can be recast in terms of pullback of the appropriate Fubini-Study metric, as described in the introduction.
\end{remark}

\section{Bergman Kernel asymptotics}\label{sec:Bergman}
In this section we calculate higher order terms of the asymptotic expansion of the Bergman Kernel for $H^0(X,E\otimes L^{\otimes k})$ using the iterative formula in~\cite{BeBoSJ}. Recall that the Bergman Kernel is the Kernel of integration of the orthogonal projection
$$
\Pi\colon \Gamma(E\otimes L^{\otimes k})_{L^2} \to H^0(E\otimes L^{\otimes k})
$$
from the space of $L^2$ sections to the space of holomorphic sections, with respect to the volume form $\omega^n/n!$ determined by
\begin{equation}\label{eq:omega}
\omega = iF_h
\end{equation}
and the metric $H \otimes h^{\otimes k}$. It can identified with a section of the vector bundle $(E\otimes L^{\otimes k})^* \boxtimes E\otimes L^{\otimes k}$ over $X \times X$ given by
$$
B_k(x,y) = \sum_j s_j(x)(\cdot,s_j(y))_{H \otimes h^{\otimes k}},
$$
where $\{s_j\}$ is an orthonormal basis of $H^0(X,E \otimes L^{\otimes k})$, and admits an asymptotic expansion of the form
\begin{equation}\label{eq:Bexpansionxy}
(2\pi)^nB_k(x,y) = \Id k^n + B_1(x,y) k^{n-1} + B_2(x,y) k^{n-2} + \ldots
\end{equation}
as $k$ goes to infinity. The precise result we need (see e.g. \cite[Theorem 4.1.1]{MM1}) concerns the previous expansion over the diagonal, where $B_k$ induces an endomorphism of $E \otimes L^{\otimes k}$ given by
\begin{equation}\label{eq:Bexpansionxx}
B_k(x) = B_k(x,x) = \sum_j s_j(x)(\cdot,s_j(x))_{H \otimes h^{\otimes k}}.
\end{equation}
\begin{theorem}\label{th:Bergman}
The Bergman kernel has a $C^\infty$ asymptotic expansion over the diagonal
$$
(2\pi)^nB_k = B_0 k^n + B_1 k^{n-1} + B_2 k^{n-2} + O(k^{n-3})
$$
where $B_0 = \Id$ and $B_i$ are endomorphism valued smooth functions on $X$ depending on $H$ and $h$. More precisely, for any $l,N \geq 0$ there exists a constant $K(l,N,H,h)$ such that
$$
\Bigg{\|}(2\pi)^nB_k - \sum_{j=0}^N B_ik^{n-j}\Bigg{\|}_{C^l} \leq K(l,N,H,h)k^{n-N-1}.
$$
Moreover, the expansion is uniform in that for any $l,N$ there is an integer $m$ such that if $(H,h)$ runs over a set of metrics which are bounded in $C^m$, and with $(H,h)$ bounded below, the constants $K(l,N,H,h)$ are bounded by a constant $K(l,N)$ independent of $(H,h)$.
\end{theorem}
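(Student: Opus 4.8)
The plan is to take the existence, $C^\infty$-character and uniformity of the expansion from the literature, and then to compute $B_0$, $B_1$ and $B_2$ explicitly by running the recursion of Berman--Berndtsson--Sj\"{o}strand \cite{BeBoSJ}.

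First I would recall that an asymptotic expansion $(2\pi)^n B_k = \sum_j B_j k^{n-j}$ with a $C^l$-remainder of size $k^{n-N-1}$ is standard: for the untwisted case it goes back to Tian, Catlin and Zelditch \cite{T1,C1,Z1} with the subleading terms computed by Lu \cite{Lu}, and the version for sections of $E\otimes L^{\otimes k}$ is contained in Ma--Marinescu \cite[Theorem 4.1.1]{MM1} and in \cite{BeBoSJ}. That the constants $K(l,N,H,h)$ depend only on a $C^m$-bound and a positive lower bound for $(H,h)$ is visible in any of these constructions, since the local parametrix is built from finitely many derivatives of the metric data and the error is controlled by elliptic and Sobolev estimates with the same dependence; I would take this as input and concentrate on identifying the first three coefficients.

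To pin down $B_0,B_1,B_2$ I would fix $x_0\in X$, choose K\"ahler normal coordinates $z=(z_1,\dots,z_n)$ for $\omega$ centred at $x_0$ and trivialise $L$ and $E$ near $x_0$ by holomorphic frames in which the fibre metrics are normalised; then the local weight of $h^{\otimes k}$ is $k\varphi$ with $\varphi(z)=|z|^2+O(|z|^3)$, and the Taylor coefficients of $\varphi$ of order $\geq 3$, respectively of the connection matrix of $H$, encode $R_\omega$, respectively $F_H$, and their covariant derivatives at $x_0$. After the rescaling $z\mapsto z/\sqrt{k}$ the kernel converges to the model Bergman kernel of the Bargmann--Fock space $(\CC^n,e^{-|z|^2})$ tensored with $\End(E_{x_0})$, and \cite{BeBoSJ} produces a formal expansion $(2\pi)^n B_k(x_0)\sim k^n\sum_{j\geq 0}b_j(x_0)k^{-j}$ in which $b_0=\Id$ and each $b_j$ is obtained from the $b_i$ with $i<j$ by applying an explicit differential operator built from the subleading Taylor data and then evaluating a Gaussian integral over $\CC^n$; equivalently each $b_j$ is a universal polynomial in $F_H$, $R_\omega$ and their covariant derivatives of the appropriate weight. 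At order $k^{n-1}$ only the cubic term of $\varphi$ (giving $\tfrac{1}{2}S_\omega\,\Id$, as in the Tian--Yau--Zelditch expansion) and the linear term of the connection of $H$ (giving $i\Lambda F_H$) survive, so $B_1=i\Lambda F_H+\tfrac{S_\omega}{2}\Id$. At order $k^{n-2}$ I would group the contributions as: the purely base part, which must reproduce Lu's scalar formula $-\tfrac{1}{6}\Delta S_\omega+\tfrac{1}{24}(|R_\omega|^2-4|\tr R_\omega|^2+3S_\omega^2)$ times $\Id$; the purely bundle part $-\tfrac{1}{2}\Delta_{\dbar}(i\Lambda F_H)-\tfrac{1}{2}\Lambda F_H\,\Lambda F_H-\tfrac{1}{2}(F_H)_{j,\overline{k}}(F_H)_{k,\overline{j}}$, coming from the quadratic Taylor term of the connection and the square of its linear term; and the mixed part $\tfrac{1}{2}S_\omega\,i\Lambda F_H-\tfrac{1}{2}(F_H)_{j,\overline{k}}\tr(R_\omega)_{k,\overline{j}}$, coming from products of the cubic term of $\varphi$ with the linear term of the connection. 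Each numerical coefficient is fixed by computing the relevant Gaussian moments on $\CC^n$ by Wick's theorem, keeping careful track of the ordering of the $\End(E)$-valued factors.

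The hard part will be precisely this last computation: the number of terms contributing at order $k^{n-2}$ is large, the Gaussian integrals are delicate, and since $E$ has arbitrary rank the matrix ordering has to be respected throughout --- this is the ``elementary but laborious'' calculation referred to in the introduction. To guard against slips I would check that for $E=\cO_X$ the formula reduces to Lu's, that all curvature terms drop out in the flat model, that integrating the trace of $(2\pi)^n B_k$ against $\omega^n/n!$ reproduces the top three coefficients of the Hirzebruch--Riemann--Roch polynomial for $\dim H^0(E\otimes L^{\otimes k})$, and that the answer agrees with the independent computations of L.~Wang \cite{WangLi} and H.~Xu \cite{Xu} (cf. Remark \ref{rem:notationdiff}). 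Finally, the $C^\infty$ statement and the uniform remainder bounds follow by differentiating the Berman--Berndtsson--Sj\"{o}strand construction and tracking the metric dependence of the constants in the underlying estimates, as already noted.
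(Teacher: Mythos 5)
Your proposal is correct and matches the paper's treatment of this statement: the paper does not prove Theorem~\ref{th:Bergman} itself but quotes it from the literature (pointing to \cite[Theorem 4.1.1]{MM1}, with the Tian--Catlin--Zelditch line and \cite{BeBoSJ} in the background), exactly as you do, including taking the uniformity of the constants over $C^m$-bounded, bounded-below families $(H,h)$ as part of that standard package. The explicit identification of $B_1$ and $B_2$ that occupies most of your write-up is not part of this statement --- it is Theorem~\ref{th:2}, which the paper proves by Taylor-expanding $\Delta_0$ and $\Delta_G'$ and applying the Berman--Berndtsson--Sj\"ostrand recursion directly on the diagonal (no rescaling to the Bargmann--Fock model and no Gaussian/Wick integrals), so your sketch of that computation is a different, though standard, route and is not needed here.
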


To calculate the first coefficients of the asymptotic expansion of $B_k$ we use the \emph{local asymptotic Bergman Kernels} constructed in~\cite{BeBoSJ}. To state the result that we will use (\cite[Theorem 3.1]{BeBoSJ}), fix $x_0 \in X$, local holomorphic coordinates $x \in U \subset \CC^r$ centred at $x_0$ and trivialisations of $L$ and $E$ such that
$$
(u_1,u_2)_h = u_1 \overline{u_2} e^{-\phi}, \qquad (s_1,s_2)_H = \overline{s_2}^T H s_1
$$
where $\phi$ is a smooth function on $\CC^n$ and $H$ a hermitian matrix-valued smooth function of rank $r$, the rank of $E$, and we use column notation for the local sections $s_j$ of $E$. By~\eqref{eq:omega}, we have
$$
\omega = i\partial\dbar\phi = ig_{j,\overline k} dx^j \wedge d\overline{x}^k,
$$
where we sum over repeated indices. To simplify some of the formulae, this summation convention will be assumed in the sequel when there is no possible confusion. We denote by $g$ the matrix $(g_{j,\overline k})$. Let
$$
\Lambda\colon \Omega^{p,q} \to \Omega^{p-1,q-1}
$$
be the contraction operator on forms. Denoting $(g^{j,\overline k})^T = g^{-1}$, $\Lambda$ acts on $(1,1)$ forms by
$$
\Lambda\(a_{j,\overline{k}} dx^j \wedge d\overline{x}^k\) = -i g^{j,\overline k}a_{j,\overline k}.
$$

Let $\psi(x,y)$ and $G(x,y)$ be almost holomorphic extensions of $\phi$ and $H$ on
$$
\overline{\Delta} = \{(x,y): y = \overline{x}\} \subset U \times U.
$$
By definition, $\psi\colon U\times U \to \CC$ is smooth, satisfies $\psi(x,\overline{x}) = \phi(x)$ and
\begin{equation}\label{eq:almosthol}
D^\alpha(\dbar \psi)_{|\overline{\Delta}} = 0
\end{equation}
for all multiindex $\alpha$, and similarly for $G$. Let $\theta\colon U \times U \times U \to \CC^n$ be the smooth function of of $3n$ variables defined by
\begin{equation}\label{eq:thetaj}
\theta^j(x,y,z) = \int_0^1 \partial_j \psi(tx + (1-t)y,z)dt,
\end{equation}
where $\partial_j\psi=\partial_j\psi(x,y)$ denotes partial holomorphic differentiation with respect to $x_j$. Using the identity
\begin{equation}\label{eq:theta_z}
\theta(y,y,z) = \partial_y \psi(y,z),
\end{equation}
it follows that the map
\begin{equation}\label{eq:coordinatechange}
(x,y,z) \to (x,y,\theta)
\end{equation}
defines a smooth change of coordinates around the origin in $\CC^{3n}$, as $\theta(0) = \partial_x\phi(0)=0$ and
$$
d_z\theta(0) = \partial_z \theta(0) = \partial_z\partial_y \psi(0) = \dbar_x\partial_x\phi (0) = \Id,
$$
where $d_z = \partial_z + \dbar_z$ and the first equality follows from \eqref{eq:almosthol}. Define functions
\begin{align*}
\Delta_0(x,y,\theta) & = \det \partial_y\partial_z \psi(y,z)/\det \partial_z \theta(x,y,z),\\
\Delta_G'(x,y,\theta) & = G(x,z)^{-1}G(y,z)
\end{align*}
where $z = z(x,y,\theta)$, and let
$$
\Delta_G(x,y,\theta) = \Delta_0 \Delta_G'.
$$

\begin{theorem}(Berman-Berndtsson-Sj\"{o}strand)
The coefficients in the expansion~\eqref{eq:Bexpansionxx} satisfy the following recursive formula for $m> 0$
\begin{equation}\label{eq:Recursive}
\sum_{l=0}^m\frac{(D_\theta \cdot D_y)^l}{l!}(B_{m-l}\Delta_G)_{|x=y} = 0,
\end{equation}
$D_\theta \cdot D_y = \sum_{j=1}^n \partial_{\theta^j}\partial_{y^j}$, $B_k = B_k(x,z)$, $x$ is considered fixed and $z = z(y,\theta)$.
\end{theorem}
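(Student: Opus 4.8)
The plan is to obtain the recursion \eqref{eq:Recursive} as the order-by-order form of the single characterising property of the Bergman kernel: that the associated integral operator reproduces holomorphic sections. Since Theorem~\ref{th:Bergman} already grants a $C^\infty$ asymptotic expansion \eqref{eq:Bexpansionxx} together with the uniform off-diagonal decay of $B_k$, I may treat the coefficients $B_m(x,z)$ --- holomorphic in $x$ and extended in the anti-holomorphic slot $z$ via the almost-holomorphic extensions --- as given smooth $\End(E)$-valued objects, and it remains only to show that they satisfy the stated identity.

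First I would localise the reproducing property. The Bergman projection gives exactly $u(x)=\int_X B_k(x,w)\,u(w)\,e^{-k\phi(w)}\,\frac{\omega^n}{n!}$ for every holomorphic section $u$; by the uniform Gaussian decay of the kernel away from the diagonal the integral concentrates in an arbitrarily small ball about $x$ up to $O(k^{-\infty})$, so I may pass to the fixed chart and trivialisations in which $(u_1,u_2)_h=u_1\overline{u_2}e^{-\phi}$ and $(s_1,s_2)_H=\overline{s_2}^{\,T}Hs_1$. There the local kernel has the peaked form $B_k(x,w)=(2\pi)^{-n}e^{k\psi(x,\bar w)}\sum_m B_m(x,\bar w)k^{n-m}+O(k^{-\infty})$, with $\psi$ the almost-holomorphic extension of $\phi$.

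Next I would linearise the phase and read off $\Delta_G$. Writing $y$ for the holomorphic coordinate of the integration point $w$ and $z$ for $\bar w$, the kernel factor $e^{k\psi(x,z)}$ together with the weight $e^{-k\phi(w)}=e^{-k\psi(y,z)}$ produces the phase $k(\psi(x,z)-\psi(y,z))$, which by the definition \eqref{eq:thetaj} of $\theta$ equals $k\sum_j(x^j-y^j)\theta^j(x,y,z)=k\langle x-y,\theta\rangle$, linear in $\theta$. The change of coordinates \eqref{eq:coordinatechange} is a local diffeomorphism near the origin by \eqref{eq:theta_z} and $d_z\theta(0)=\Id$, and replacing $z$ by $\theta$ contributes the Jacobian $\det\partial_z\theta$, which together with the normalising factor $\det\partial_y\partial_z\psi$ assembles into $\Delta_0$; the fibre metric $H\otimes h^{\otimes k}$ contributes, in the order dictated by the pairing $\overline{s_2}^{\,T}Hs_1$, the matrix factors combining into $\Delta_G'=G(x,z)^{-1}G(y,z)$. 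Thus the effective amplitude of the reproducing integral is the symbol $\sum_m B_m k^{-m}$ multiplied by $\Delta_G=\Delta_0\Delta_G'$, and the identity takes the model form
$$u(x)=\Big(\frac{k}{2\pi}\Big)^n\!\int e^{k\langle x-y,\theta\rangle}\Big(\sum_{m\ge 0}B_m k^{-m}\Big)\Delta_G\;u(y)\,dy\,d\theta+O(k^{-\infty}).$$
The phase $\langle x-y,\theta\rangle$ has a single non-degenerate critical point with off-diagonal Hessian on $x=y$, so the asymptotic evaluation of this integral is governed by the operator $\sum_{l\ge 0}\frac{k^{-l}}{l!}(D_\theta\cdot D_y)^l$ applied to the full amplitude and restricted to $x=y$; the holomorphicity of $u$ is what lets the derivatives falling on $u$ resum so that its contribution collapses to $u(x)$. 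Matching powers of $k$ then decouples $u$ from the symbol: the coefficient of $k^0$ gives the normalisation $B_0\Delta_G|_{x=y}=\Id$ (consistent with $\Delta_G|_{x=y}=\Id$ and $B_0=\Id$), while the vanishing of the coefficient of $k^{-m}$ for each $m>0$ is exactly \eqref{eq:Recursive}.

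I expect the main obstacle to be the rigorous stationary-phase step: justifying the contour deformation that lets the linearised phase $k\langle x-y,\theta\rangle$ control the integral, showing that the almost-holomorphic errors $\dbar\psi$ and $\dbar G$ --- which vanish to infinite order on $\overline{\Delta}$ by \eqref{eq:almosthol} --- contribute only $O(k^{-\infty})$, and checking that the off-diagonal Hessian reproduces precisely the operator $(D_\theta\cdot D_y)^l/l!$ with the stated combinatorial weights. A secondary but genuine difficulty is the non-commutative bookkeeping for the twisted bundle: the $\End(E)$-valued coefficients $B_m$ and the factors of $\Delta_G'=G(x,z)^{-1}G(y,z)$ must be kept in the correct order throughout so that \eqref{eq:Recursive} holds as an identity of endomorphism-valued functions. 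The reduction of the global reproducing integral to this local model is supplied by Theorem~\ref{th:Bergman} and the standard theory and need not be reproven.
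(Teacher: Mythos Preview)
The paper does not prove this theorem at all: it is stated as a citation of \cite[Theorem~3.1]{BeBoSJ} and then used as a black box to compute $B_1$ and $B_2$. There is therefore no ``paper's own proof'' to compare against; your proposal is an attempt to reconstruct the argument from the original Berman--Berndtsson--Sj\"ostrand paper rather than anything done in the present one.

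That said, your sketch follows the correct line of the original argument: localise the reproducing property, write the kernel in the form $e^{k\psi(x,z)}\sum_m B_m k^{n-m}$, linearise the phase via the substitution $z\mapsto\theta$ defined in \eqref{eq:thetaj}, identify the resulting amplitude as $(\sum B_m k^{-m})\Delta_G$, and read off the recursion from the stationary-phase expansion of an oscillatory integral with linear phase $\langle x-y,\theta\rangle$. Your identification of the obstacles --- the contour deformation, the $O(k^{-\infty})$ contribution of the almost-holomorphic errors, and the non-commutative ordering of the $\End(E)$-valued factors --- is accurate; these are exactly the technical points handled in \cite{BeBoSJ}. For the purposes of the present paper, however, none of this is needed: the recursion is simply quoted, and the work lies in evaluating \eqref{eq:b1} and \eqref{eq:b2} via the Taylor expansions of Lemmas~\ref{lem:bergman0}--\ref{lem:bergman2}.
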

We use now \eqref{eq:Recursive} to calculate $B_1$ and $B_2$ in~\eqref{eq:Bexpansionxx}, given by
\begin{equation}\label{eq:b1}
B_1(x,z(x,x,\theta)) = - D_\theta \cdot D_y (\Delta_G)_{|x=y}
\end{equation}
\begin{equation}\label{eq:b2}
B_2(x,z(x,x,\theta)) = \(- D_\theta \cdot D_y(B_1\Delta_G) - \frac{1}{2}(D_\theta \cdot D_y)^2(\Delta_G)\)_{|x=y}.
\end{equation}

For the calculations we define $\Psi = \Psi(y,z)$ as the $n \times n$ matrix of partial derivatives $\partial_z\partial_y\psi(y,z)$, which satisfies $\Psi(x,\overline{x}) = g^T(x)$. Considering $G$ as a function on $(y,z)$ variables, we have
\begin{equation}\label{eq:etaR}
\Psi^{-1}\partial_y \Psi(x,\overline{x}) = \eta(x), \qquad \partial_z(\Psi^{-1}\partial_y \Psi)(x,\overline{x}) = R(x),
\end{equation}
\begin{equation}\label{eq:ThetaF}
G^{-1}\partial_y G(x,\overline{x}) = \Theta(x), \qquad \partial_z(G^{-1}\partial_yG)(x,\overline{x}) = F(x),
\end{equation}
where $\eta$ and $R = \dbar \eta$ denote the Chern connection of $\omega$ and its curvature and $\Theta$ and $F = \dbar \Theta$ the Chern connection of $H$ and its curvature. Without loss of generality, we assume that
\begin{equation}\label{eq:metricasympt}
e^{-\phi} = 1 + O(|x|^2), \qquad g(x) = \Id + O(|x|^2), \qquad H(x) = \Id + O(|x|^2),
\end{equation}
so that $\eta(0) = (g^T)^{-1}\partial g^T(0) = 0$ and $\Theta(0) = H^{-1}\partial H(0)=0$. Further, we will use the following properties of the change of coordinates \eqref{eq:coordinatechange}, which follow from elementary calculations using \eqref{eq:metricasympt}.
\begin{lemma}\label{lem:propertiesz}

\begin{align}
\partial_\theta z & = (\partial_z\theta)^{-1}\nonumber\\
\partial_y z (0) &= 0\\
\partial^2_{\theta,y}z(0) &= \partial^2_{\theta,\theta}z(0) = 0\nonumber
\end{align}

\end{lemma}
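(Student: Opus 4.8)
The plan is to derive all three identities by implicit differentiation of the relation that defines $z = z(x,y,\theta)$, namely $\theta(x,y,z(x,y,\theta)) = \theta$ (equivalently $z(x,y,\theta(x,y,z)) = z$), combined with the explicit formula \eqref{eq:thetaj} for $\theta$ and the normalisation \eqref{eq:metricasympt}. The first identity is immediate: differentiating $\theta(x,y,z(x,y,\theta)) = \theta$ with respect to $\theta$, holding $x$ and $y$ fixed, the chain rule gives $(\partial_z\theta)\,\partial_\theta z = \Id$, so $\partial_\theta z = (\partial_z\theta)^{-1}$ wherever $\partial_z\theta$ is invertible; since $\partial_z\theta(0) = \Id$ (computed just before the statement) this holds on a neighbourhood of the origin, and in particular $\partial_\theta z(0) = \Id$.

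For the remaining two identities I would first read off from \eqref{eq:thetaj} the relevant low-order Taylor coefficients of $\theta$ at the origin. Differentiating under the integral sign, $\partial_y\theta$, $\partial^2_{z,z}\theta$ and $\partial^2_{y,z}\theta$ are expressed through second and third derivatives of $\psi$ in its two groups of variables; evaluating at the origin and carrying out the $t$-integration one finds, up to harmless numerical factors, that $\partial_y\theta(0)$ equals the pure holomorphic $2$-jet of $\psi$ in its first slot, that $\partial^2_{z,z}\theta(0)$ equals the second-slot derivative of $\Psi$ at $0$, and that $\partial^2_{y,z}\theta(0)$ equals (a multiple of) the first-slot derivative of $\Psi$ at $0$, with $\Psi$ as in \eqref{eq:etaR}. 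All three vanish: restricting to the diagonal, $\psi(x,\overline{x}) = \phi(x)$ and $\Psi(x,\overline{x}) = g^T(x)$, so the chain rule identifies these quantities at $0$ with $\partial\partial\phi(0)$, $\dbar g^T(0)$ and $\partial g^T(0)$ respectively; the last two vanish because $g = \Id + O(|x|^2)$, and the first vanishes once the local holomorphic frame of $L$ is chosen so that $\phi$ has vanishing pure holomorphic $2$-jet at $x_0$ — equivalently $\phi(x) = |x|^2 + O(|x|^4)$ — which is consistent with \eqref{eq:metricasympt} and leaves $\omega$ unchanged.

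With these vanishings in hand the remaining identities follow from one further round of implicit differentiation. Differentiating $\theta(x,y,z(x,y,\theta)) = \theta$ with respect to $y$ gives $\partial_y\theta + (\partial_z\theta)\,\partial_y z = 0$, whence at the origin $\partial_y z(0) = -(\partial_z\theta(0))^{-1}\partial_y\theta(0) = 0$. Differentiating $(\partial_z\theta)\,\partial_\theta z = \Id$ once more with respect to $\theta$ and evaluating at $0$, where $\partial_z\theta = \partial_\theta z = \Id$, gives $\partial^2_{\theta,\theta}z(0) = -\partial^2_{z,z}\theta(0) = 0$; differentiating $\partial_y\theta + (\partial_z\theta)\,\partial_y z = 0$ with respect to $\theta$, evaluating at $0$ and using $\partial_y z(0) = 0$ to discard the term carrying $\partial^2_{z,z}\theta$, gives $\partial^2_{\theta,y}z(0) = -\partial^2_{z,y}\theta(0) = 0$. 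The computations are elementary throughout, and the step I would flag as the main obstacle is the bookkeeping: matching each second derivative of $z$ at $0$ to the correct combination of derivatives of $\theta$ (and, through \eqref{eq:thetaj}, of $\psi$), and checking that \eqref{eq:metricasympt}, supplemented by the frame normalisation above, genuinely annihilates every derivative of $\psi$ that occurs — without the pure-holomorphic-$2$-jet normalisation, $\partial_y\theta(0)$, and hence $\partial_y z(0)$, need not vanish.
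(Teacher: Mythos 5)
Your proof is correct and follows essentially the route the paper intends: the paper states Lemma~\ref{lem:propertiesz} with no more justification than that it follows ``from elementary calculations using \eqref{eq:metricasympt}'', and your implicit differentiation of $\theta(x,y,z(x,y,\theta))=\theta$ combined with the low-order Taylor coefficients of \eqref{eq:thetaj} is exactly that calculation. Your observation that $\partial_y z(0)=0$ additionally requires normalising away the pure holomorphic $2$-jet of $\phi$ by a holomorphic change of frame (the conditions in \eqref{eq:metricasympt} by themselves only control mixed derivatives of $\phi$, hence only yield $\partial^2_{\theta,\theta}z(0)=\partial^2_{\theta,y}z(0)=0$) is accurate and sharper than the paper's wording; the only slip is that ``equivalently $\phi=|x|^2+O(|x|^4)$'' is a slightly stronger normalisation than vanishing of the pure $2$-jet, but it is equally achievable without loss of generality and does not affect the argument.
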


For simplicity we will assume that $\phi$ and $H$ are real analytic and so $\psi$, $\Psi$, $G$ and $\theta$ are holomorphic. The general case follows easily combining the computations below with condition \eqref{eq:almosthol}. Hence, the following Taylor expansions in $y$ around $x=y$ hold.
\begin{lemma}\label{lem:bergman0}
\begin{equation}\label{eq:taylorDelta0}
\begin{split}
\Delta_0 &= 1 + c_1 + c_2 + c_3 + c_4 + \ldots
\end{split}
\end{equation}
where
\begin{equation}\label{eq:Delta0ci}
\begin{split}
c_1 & = \frac{1}{2}\tr\Psi^{-1}\partial_{y^k}\Psi(x,z)(y^k-x^k)\\
c_2 & = \frac{1}{3}\tr\partial_{y^k}(\Psi^{-1}\partial_{y^m}\Psi)(x,z)(y^k-x^k)(y^m-x^m)\\
c_3 & = \frac{1}{8}\tr\Psi^{-1}(\partial_{y^k}\Psi)\tr\Psi^{-1}(\partial_{y^m}\Psi)(x,z)(y^k-x^k)(y^m-x^m)\\
c_4 & = - \frac{1}{24}\tr\Psi^{-1}(\partial_{y^k}\Psi)\Psi^{-1}(\partial_{y^m}\Psi)(x,z)(y^k-x^k)(y^m-x^m)
\end{split}
\end{equation}
and
\begin{equation}\label{eq:taylorDeltaG'}
\begin{split}
\Delta_G' &= \Id + d_1 + d_2 + d_3 + \ldots
\end{split}
\end{equation}
where
\begin{equation}\label{eq:DeltaG'di}
\begin{split}
d_1 & = (G^{-1}\partial_{y^k}G)(x,z)(y^k - x^k)\\
d_2 & = \frac{1}{2}(G^{-1}\partial_{y^k}G)(G^{-1}\partial_{y^m}G)(x,z)(y^k-x^k)(y^m-x^m)\\
d_3 & = \frac{1}{2}\partial_{y^k}(G^{-1}\partial_{y^m}G)(x,z)(y^k-x^k)(y^m-x^m)
\end{split}
\end{equation}
\end{lemma}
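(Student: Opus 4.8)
The plan is to obtain both expansions by straightforward Taylor expansion in the variable $y$ about $y=x$, treating $z$ as an inert parameter; the only real work is matrix bookkeeping to recognise the resulting coefficients in the compact forms stated. Since $\phi$ and $H$ are assumed real analytic, $\psi,G,\Psi$ and $\theta$ are genuinely holomorphic, so these are ordinary convergent power series with no almost-holomorphic remainder, and the general case then follows by inserting \eqref{eq:almosthol} in the usual way. Note that the normalisation \eqref{eq:metricasympt} plays no role here, since the coefficients $c_i,d_i$ are kept with $\Psi,G$ evaluated at the general point $(x,z)$.

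For $\Delta_G'$ the expansion is essentially immediate: as $\Delta_G'(x,y,z)=G(x,z)^{-1}G(y,z)$ depends on $y$ only through the second factor, I would Taylor-expand $G(y,z)$ about $y=x$ to get
\[
\Delta_G'=\Id+G^{-1}(\partial_{y^k}G)(y^k-x^k)+\tfrac12 G^{-1}(\partial_{y^k}\partial_{y^m}G)(y^k-x^k)(y^m-x^m)+\cdots
\]
with all coefficients at $(x,z)$, and then substitute the identity $G^{-1}\partial_{y^k}\partial_{y^m}G=\partial_{y^k}(G^{-1}\partial_{y^m}G)+(G^{-1}\partial_{y^k}G)(G^{-1}\partial_{y^m}G)$ into the second order term to split it as $d_2+d_3$.

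For $\Delta_0=\det(\partial_y\partial_z\psi(y,z))/\det(\partial_z\theta(x,y,z))$ I would factor both determinants through $\det\Psi(x,z)$. The numerator is $\det\Psi(y,z)=\det\Psi(x,z)\cdot\det(\Id+A)$ with $A=\sum_{l\ge1}\tfrac1{l!}\Psi^{-1}(\partial_{y^{k_1}}\!\cdots\partial_{y^{k_l}}\Psi)\prod_i(y^{k_i}-x^{k_i})$ at $(x,z)$. For the denominator I would differentiate the integral representation \eqref{eq:thetaj} under the integral sign, $\partial_{z^m}\theta^j(x,y,z)=\int_0^1(\partial_{z^m}\partial_j\psi)(tx+(1-t)y,z)\,dt$, expand the integrand about $x$ using $tx+(1-t)y-x=(1-t)(y-x)$, and integrate in $t$ with $\int_0^1(1-t)^l\,dt=\tfrac1{l+1}$; this gives $\partial_z\theta(x,y,z)=\Psi(x,z)(\Id+Q)$ where the order-$l$ part of $Q$ equals the order-$l$ part of $A$ with $\tfrac1{l!}$ replaced by $\tfrac1{(l+1)!}$, so that $Q_1=\tfrac12A_1$ and $Q_2=\tfrac13A_2$. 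Hence $\Delta_0=\det(\Id+A)/\det(\Id+Q)=\exp\big(\tr\log(\Id+A)-\tr\log(\Id+Q)\big)$, and expanding logarithm and exponential to second order in $(y-x)$ gives $\Delta_0=1+\tfrac12\tr A_1+\big(\tfrac23\tr A_2-\tfrac38\tr A_1^2+\tfrac18(\tr A_1)^2\big)+\cdots$. The first term is $c_1$ and $\tfrac18(\tr A_1)^2$ is $c_3$; the identity $\tr\Psi^{-1}\partial_{y^k}\partial_{y^m}\Psi=\tr\partial_{y^k}(\Psi^{-1}\partial_{y^m}\Psi)+\tr\Psi^{-1}(\partial_{y^k}\Psi)\Psi^{-1}(\partial_{y^m}\Psi)$ together with the arithmetic $-\tfrac38=-\tfrac13-\tfrac1{24}$ then distributes the remaining second order terms into $c_2$ and $c_4$ exactly as stated.

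The whole computation is elementary but laborious, as the paper itself flags for the related calculations. The one step I expect to need care is the denominator: one must use the integral representation of $\theta$ to see that $\det\partial_z\theta$ is the $t$-averaged version of the numerator $\det\Psi(y,z)$, which is precisely what produces the specific rational coefficients $\tfrac12,\tfrac13,\tfrac18,-\tfrac1{24}$ -- a naive expansion of $\theta$ that ignored the integral would give the wrong fractions. A secondary point is simply keeping matrix order straight (the $\Delta_0$ expansion reduces to scalar traces and so is abelian, whereas $\Delta_G'$ is genuinely matrix valued) and noting that only terms through second order in $(y-x)$ are needed, since these feed \eqref{eq:b1}--\eqref{eq:b2}, which apply at most two derivatives.
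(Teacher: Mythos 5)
Your proposal is correct and follows essentially the same route as the paper: Taylor expansion of the integrand in \eqref{eq:thetaj} to get $\partial_z\theta(x,y,z)=\Psi(x,z)+\tfrac12\partial_{y^k}\Psi\,(y^k-x^k)+\tfrac16\partial_{y^k}\partial_{y^m}\Psi\,(y^k-x^k)(y^m-x^m)+\cdots$, expansion of the resulting ratio of determinants to second order, and the identity $\Psi^{-1}\partial_{y^m}\partial_{y^k}\Psi=\partial_{y^m}(\Psi^{-1}\partial_{y^k}\Psi)+(\Psi^{-1}\partial_{y^m}\Psi)(\Psi^{-1}\partial_{y^k}\Psi)$ (and its analogue for $G$) to redistribute the terms into $c_2,c_4$ and $d_2,d_3$. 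Your use of $\exp\operatorname{tr}\log$ for the determinant ratio, versus the paper's direct expansion of $\det(\Id+P)^{-1}$ with $P$ coming from $\Psi^{-1}(y,z)\,\partial_z\theta(x,y,z)$, is only a difference of bookkeeping and yields the same coefficients $\tfrac12,\tfrac13,\tfrac18,-\tfrac1{24}$.
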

\begin{proof}
Note first that the Taylor expansion in $y$ around $x=y$ of the integrand in~\eqref{eq:thetaj} is
\begin{align*}
\partial_j\psi(tx+(1-t)y,z) & = \partial_j \psi(x,z) + (1-t)\partial^2_{k,j}\psi(x,z)(y^k-x^k)\\
& + (1-t)^2 \frac{1}{2}\partial^3_{m,k,j}\psi(x,z)(y^k-x^k)(y^m-x^m) + \ldots,
\end{align*}
and therefore
\begin{equation}\label{eq:partialztheta}
\begin{split}
\partial_z\theta(x,y,z) & = \Psi(x,z) + \frac{1}{2}(\partial_{y^k}\Psi(x,z))(y^k-x^k)\\
& + \frac{1}{6}(\partial_{y^k}\partial_{y^m}\Psi(x,z))(y^k-x^k)(y^m-x^m) + \ldots.
\end{split}
\end{equation}
Expanding now $\Psi^{-1}(y,z)$ we obtain
\begin{equation}\label{eq:taylorgthetaz}
\begin{split}
\Psi^{-1}\partial_z\theta(x,y,z)&= \Id - \frac{1}{2}\Psi^{-1}\partial_{y^k}\Psi(x,z)(y^k-x^k)\\
& + \frac{1}{2}\Psi^{-1}(\partial_{y^k}\Psi)\Psi^{-1}(\partial_{y^m}\Psi)(x,z)(y^k-x^k)(y^m-x^m)\\
& - \frac{1}{3}\Psi^{-1}\partial_{y^k}\partial_{y^m}\Psi(x,z)(y^k-x^k)(y^m-x^m) + \ldots
\end{split}
\end{equation}
To prove \eqref{eq:taylorDelta0}, note now that $\Delta_0 = \det(\Id + P)^{-1}$, for a suitable matrix $P$. Combining the expansions for the determinant and the inverse of a matrix of the form $\Id + P$, we find
\begin{align*}
\Delta_0 &= 1 + \frac{1}{2}\tr\Psi^{-1}\partial_{y^k}\Psi(x,z)(y^k-x^k)\\
& - \frac{1}{2}\tr\Psi^{-1}(\partial_{y^k}\Psi)\Psi^{-1}(\partial_{y^m}\Psi)(x,z)(y^k-x^k)(y^m-x^m)\\
& + \frac{1}{3}\tr\Psi^{-1}\partial_{y^k}\partial_{y^m}\Psi(x,z)(y^k-x^k)(y^m-x^m)\\
& + \frac{1}{8}\tr\Psi^{-1}(\partial_{y^k}\Psi)\tr\Psi^{-1}(\partial_{y^m}\Psi)(x,z)(y^k-x^k)(y^m-x^m)\\
& + \frac{1}{8}\tr\Psi^{-1}(\partial_{y^k}\Psi)\Psi^{-1}(\partial_{y^m}\Psi)(x,z)(y^k-x^k)(y^m-x^m) + \ldots
\end{align*}
and hence one sees that \eqref{eq:taylorDelta0} follows from
$$
\Psi^{-1}\partial_{y^m}\partial_{y^k}\Psi = \partial_{y^m}(\Psi^{-1}\partial_{y^k}\Psi) + (\Psi^{-1}\partial_{y^m}\Psi)(\Psi^{-1}\partial_{y^k}\Psi).
$$
Similarly, \eqref{eq:taylorDeltaG'} follows from a similar calculation and the Taylor expansion of $\Delta_G'$ combined with
$$
G^{-1}\partial_{y^m}\partial_{y^k}G = \partial_{y^m}(G^{-1}\partial_{y^k}G) + (G^{-1}\partial_{y^m}G)(G^{-1}\partial_{y^k}G).
$$
\end{proof}
From the previous expansions we obtain the following, where $S = \Lambda i \tr R$ is the scalar curvature of $\omega$ and we use the same notation as in \eqref{eq:etaR}, \eqref{eq:ThetaF}.
\begin{lemma}\label{lem:bergman1}
\begin{equation}\label{eq:DeltaGtheta}
\begin{split}
\partial_{\theta^j}\Delta_{0|y=x} & = 0 \\
\partial_{\theta^j}\Delta'_{G|y=x} &= 0,
\end{split}
\end{equation}
\begin{equation}\label{eq:Delta0yDeltaGRF}
\begin{split}
\partial_{y^j}\Delta_0(x,x,\overline x) & = \frac{1}{2}\tr \eta_j(x),\\
\partial_{y^j}\Delta_G'(x,x,\overline x) & = \Theta_j(x),\\
\partial_{z^k}\partial_{y^j}\Delta_0(x,x,\overline x) & = - \frac{1}{2}\tr R_{j,\overline k}(0),\\
\partial_{z^k}\partial_{y^j}\Delta_G'(x,x,\overline x) & = - F_{j,\overline k}(0),
\end{split}
\end{equation}
\begin{equation}\label{eq:SLambdaFxoverlinex}
\begin{split}
D_\theta\cdot D_y(\Delta_0)(x,x,\overline x) & = - \frac{1}{2}S(x),\\
D_\theta\cdot D_y(\Delta_G')(x,x,\overline x) & = - i\Lambda F(x).
\end{split}
\end{equation}
\end{lemma}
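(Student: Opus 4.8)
The plan is to read off all six identities from the Taylor expansions of $\Delta_0$ and $\Delta_G'$ in Lemma~\ref{lem:bergman0}, together with the coordinate-change facts~\eqref{eq:theta_z}, Lemma~\ref{lem:propertiesz} and the normalisation~\eqref{eq:metricasympt}; throughout one keeps in mind both that $\Delta_0,\Delta_G'$ are naturally functions of $(x,y,z)$ and that $z=z(x,y,\theta)$ enters them only through the third slot, and one always evaluates at $y=x$ (and, when a curvature appears, at the centre $x_0=0$, where $\eta(0)=\Theta(0)=0$). First I would prove~\eqref{eq:DeltaGtheta} by showing that $\Delta_0$ and $\Delta_G'$ are constant along the diagonal: clearly $\Delta_G'(x,x,z)=G(x,z)^{-1}G(x,z)=\Id$, and by~\eqref{eq:theta_z} the denominator $\det\partial_z\theta(x,x,z)=\det\Psi(x,z)$ agrees with the numerator $\det\partial_y\partial_z\psi(y,z)|_{y=x}$ of $\Delta_0$, so $\Delta_0(x,x,z)=1$. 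Being independent of $z$, hence of $\theta$, on $\{x=y\}$, these functions have vanishing $\partial_{\theta^j}$ there; equivalently $\partial_{z^m}\Delta_0$ and $\partial_{z^m}\Delta_G'$ vanish on $\{x=y\}$, a fact re-used below.

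Next, for~\eqref{eq:Delta0yDeltaGRF}, the point is that a single $\partial_{y^j}$ evaluated on $\{x=y\}$ only sees the linear terms $c_1,d_1$ of Lemma~\ref{lem:bergman0}: since the coefficients appearing there depend only on $(x,z)$, the $y$-derivatives of $c_2,c_3,c_4$ (resp.\ $d_2,d_3$) retain a factor $(y-x)$ and vanish at $y=x$, leaving $\partial_{y^j}\Delta_0|_{y=x}=\tfrac12\tr\Psi^{-1}\partial_{y^j}\Psi(x,z)$ and $\partial_{y^j}\Delta_G'|_{y=x}=G^{-1}\partial_{y^j}G(x,z)$; setting $z=\overline{x}$ and invoking the first equalities in~\eqref{eq:etaR}, \eqref{eq:ThetaF} gives the first two lines. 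Applying one further $\partial_{z^k}$ and then setting $z=\overline{x}$ produces $\tfrac12\tr\big[\partial_z(\Psi^{-1}\partial_y\Psi)\big]_{j,\overline{k}}$ and $\big[\partial_z(G^{-1}\partial_y G)\big]_{j,\overline{k}}$, which by the second equalities of~\eqref{eq:etaR}, \eqref{eq:ThetaF} equal $-\tfrac12\tr R_{j,\overline{k}}$ and $-F_{j,\overline{k}}$ --- the minus sign being the one forced by the ordering in $R=\dbar\eta$, $F=\dbar\Theta$, while~\eqref{eq:metricasympt} is what makes the cross terms $(\partial_z\Psi^{-1})(\partial_y\Psi)$, $(\partial_z G^{-1})(\partial_y G)$ drop out at the centre.

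For~\eqref{eq:SLambdaFxoverlinex} I would write $D_\theta\cdot D_y=\sum_j\partial_{\theta^j}\partial_{y^j}$ and convert each $\partial_{\theta^j}$ into a $\partial_{z^j}$. Restricted to $\{x=y\}$ the chain-rule terms that pass through $z=z(x,y,\theta)$ all carry a factor $\partial_z\Delta_0$ (resp.\ $\partial_z\Delta_G'$), which vanishes there by the first paragraph, so $\partial_{\theta^j}\partial_{y^j}\Delta_0|_{y=x}=\partial_{z^m}\big(\partial_{y^j}\Delta_0|_{y=x}\big)\,\partial_{\theta^j}z^m$; at the centre $\partial_{\theta^j}z^m(0)=\big((\partial_z\theta)^{-1}\big)^{m}_{j}(0)=\delta^{m}_{j}$ by Lemma~\ref{lem:propertiesz} together with $\partial_z\theta(0)=\Id$. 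Summing the diagonal $k=j$ of~\eqref{eq:Delta0yDeltaGRF} and using $\Lambda(a_{j,\overline{k}}\,dx^j\wedge d\overline{x}^k)=-ig^{j,\overline{k}}a_{j,\overline{k}}$ with $g(0)=\Id$, so that $\sum_j\tr R_{j,\overline{j}}=S$ and $\sum_j F_{j,\overline{j}}=i\Lambda F$ at the centre, then yields $D_\theta\cdot D_y(\Delta_0)=-\tfrac12 S$ and $D_\theta\cdot D_y(\Delta_G')=-i\Lambda F$.

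I expect the only genuine difficulty to be bookkeeping: staying disciplined about which of $y,z,\theta$ is held fixed under each partial derivative (hence about when one is in the $(x,y,z)$ versus the $(x,y,\theta)$ description), tracking the transpose/index placement and curvature-sign conventions of~\eqref{eq:etaR}--\eqref{eq:ThetaF}, and checking that no Taylor coefficient beyond $c_1,d_1$ contributes after at most two derivatives. As a consistency check one can verify, via~\eqref{eq:b1} and the vanishing in~\eqref{eq:DeltaGtheta}, that the two lines of~\eqref{eq:SLambdaFxoverlinex} reproduce $B_1=i\Lambda F_H+\tfrac{S}{2}\Id$.
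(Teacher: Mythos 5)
Your proposal is correct and follows essentially the same route as the paper: read off the low-order coefficients from the Taylor expansions of Lemma~\ref{lem:bergman0}, identify them via \eqref{eq:etaR}--\eqref{eq:ThetaF}, and convert $\partial_{\theta^j}$ into $\partial_{z^k}$ using $\partial_\theta z=(\partial_z\theta)^{-1}$ from Lemma~\ref{lem:propertiesz}. The only (cosmetic) difference is that you evaluate \eqref{eq:SLambdaFxoverlinex} at the centre in normal coordinates where $g(0)=\Id$, whereas the paper contracts with $\Psi^{-1}_{j,k}=g^{j,\overline{k}}$ at a general point; both yield the same tensorial identities.
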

\begin{proof}
Formulae \eqref{eq:DeltaGtheta} follow from Lemma \ref{lem:bergman0}. Using the same Lemma, it follows that
\begin{align*}
\partial_{y^j}\Delta_0(x,x,z) & = \frac{1}{2}\tr (\Psi^{-1}\partial_{y^j}\Psi)(x,z),\\
\partial_{y^j}\Delta_G'(x,x,z) &= (G^{-1}\partial_{y^j}G)(x,z),
\end{align*}
which combined with \eqref{eq:etaR} and \eqref{eq:ThetaF} leads to \eqref{eq:Delta0yDeltaGRF}. Finally, using $\partial_\theta z = (\partial_z\theta)^{-1}$ (see Lemma \ref{lem:propertiesz}), we have that the LHS of the first equation in \eqref{eq:SLambdaFxoverlinex} equals
\begin{equation}\label{eq:Sxoverlinex2}
\begin{split}
\frac{1}{2}\partial_{\theta^j}\tr (\Psi^{-1}\partial_{y^j} \Psi)(x,\overline x) & = \frac{1}{2}\tr(\partial_{z^k}(\Psi^{-1}\partial_{y^j} \Psi))\partial_{\theta^j}z^k(x,\overline x)\\
& = \frac{1}{2}\Psi^{-1}_{j,k}\tr\partial_{z^k}(\Psi^{-1}\partial_{y^j} \Psi)(x,\overline x)\\
& = \frac{1}{2} g^{j,\overline k}\tr R_{\overline k,j}(x) = - \frac{1}{2}S(x),
\end{split}
\end{equation}
and that the LHS of the second equation in \eqref{eq:SLambdaFxoverlinex} equals
\begin{equation}\label{eq:LambdaFxoverlinex2}
\begin{split}
\partial_{\theta^j}(G^{-1}\partial_{y^j}G)(x,\overline x) &= \Psi^{-1}_{j,k}\partial_{z^k}(G^{-1}\partial_{y^j} G)(x,\overline x)\\
& = g^{j,\overline k}F_{\overline k,j}(x) = - g^{j,\overline k}F_{j,\overline k}(x) = - i\Lambda F(x).
\end{split}
\end{equation}
\end{proof}

From Lemma \ref{lem:bergman1} and \eqref{eq:b1} we obtain
\begin{align*}
B_1(x,z(x,x,\theta)) & = - D_\theta \cdot D_y(\Delta_0)_{|x=y}\Id - D_\theta \cdot D_y(\Delta_G')_{|x=y}\\
& - (\partial_{\theta^j}\Delta_0)(\partial_{y^j}(G^{-1}(x,z)G(y,z)))_{|x=y}\\
& - (\partial_{y^j}\Delta_0)(\partial_{\theta^j}(G^{-1}(x,z)G(y,z)))_{|x=y}\\
& = - D_\theta \cdot D_y(\Delta_0)_{|x=y}\Id - D_\theta \cdot D_y(\Delta_G')_{|x=y},
\end{align*}
which gives
$$
B_1(0) = \(i\Lambda F + \frac{S}{2}\Id\)(0).
$$
To compute~\eqref{eq:b2} we will use the following formulae, where $\Delta$ (without subscripts) denotes the Laplacian \begin{equation}\label{eq:Laplacian}
\Delta = 2i\Lambda \dbar\partial = -2g^{j,\overline{k}}\partial_j\dbar_k.
\end{equation}
\begin{lemma}\label{lem:bergman2}

\begin{equation}\label{eq:Delta0DeltaG'second}
\partial^2_{\theta^k,\theta^j}\Delta_0(0) = 0 \quad\text{and}\quad \partial^2_{\theta^k,\theta^j}\Delta_G'(0) = 0
\end{equation}\vspace{-4mm}
\begin{align}
D_\theta \cdot D_y(B_1)(0) &= 0\label{eq:DthetaDyB1}\\
\Psi^{-1}_{m,l}\partial_{z^l}\partial_{y^m}(\Psi^{-1}_{j,k}\tr\partial_{z^k}(\Psi^{-1}\partial_{y^j} \Psi))(0) &= \frac{1}{2}\Delta(S)(0),\label{eq:LaplacianS}\\\Psi^{-1}_{m,l}\partial_{z^l}\partial_{y^m}(\Psi^{-1}_{j,k}\partial_{z^k}(G^{-1}\partial_{y^j}G))(0) &= \frac{1}{2}\Delta (i\Lambda F)(0),\label{eq:LaplacianF}
\end{align}

\end{lemma}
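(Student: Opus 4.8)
The plan is to treat the four displayed identities separately. Equations \eqref{eq:Delta0DeltaG'second} and \eqref{eq:DthetaDyB1} are formal consequences of the shape of the Taylor expansions in Lemma~\ref{lem:bergman0} together with the properties of the coordinate change \eqref{eq:coordinatechange} recorded in Lemma~\ref{lem:propertiesz}. Equations \eqref{eq:LaplacianS} and \eqref{eq:LaplacianF} will follow once one observes that, after the single $\theta$-contraction already carried out in the proof of Lemma~\ref{lem:bergman1}, the surviving quantities are precisely (minus) the holomorphic extensions of $S$ and of $i\Lambda F$, so that a second application of $\partial_y\partial_z$ reproduces the Laplacian.

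I would argue \eqref{eq:Delta0DeltaG'second} as follows. By Lemma~\ref{lem:bergman0} every term in the expansion of $\Delta_0$ (resp.\ $\Delta'_G$) other than the constant $1$ (resp.\ $\Id$) carries an explicit factor $(y^k-x^k)$ to a positive power, all other factors being functions of $(x,z)$ alone; since in the recursion $\theta$-differentiation acts on such a term only through $z=z(x,y,\theta)$, applying $\partial^2_{\theta^k,\theta^j}$ does not reduce the power of $(y-x)$, so the result vanishes on $y=x$ and in particular at the origin. For \eqref{eq:DthetaDyB1} I would use that, with $x$ held fixed, $B_1$ is a function of $(y,\theta)$ through $z=z(x,y,\theta)$ only, so the chain rule gives $\partial_{y^j}B_1=(\partial_{z^l}B_1)\,\partial_{y^j}z^l$ and $\partial_{\theta^k}\partial_{y^j}B_1=(\partial_{z^m}\partial_{z^l}B_1)\,\partial_{\theta^k}z^m\,\partial_{y^j}z^l+(\partial_{z^l}B_1)\,\partial^2_{\theta^k,y^j}z^l$; by Lemma~\ref{lem:propertiesz} both $\partial_{y^j}z^l$ and $\partial^2_{\theta^k,y^j}z^l$ vanish at the origin, so $\partial_{\theta^k}\partial_{y^j}B_1(0)=0$, and summing over $j=k$ gives the claim.

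For \eqref{eq:LaplacianS} and \eqref{eq:LaplacianF} I would set
\[
\Sigma(y,z)=\Psi^{-1}_{j,k}\tr\partial_{z^k}(\Psi^{-1}\partial_{y^j}\Psi),\qquad T(y,z)=\Psi^{-1}_{j,k}\partial_{z^k}(G^{-1}\partial_{y^j}G).
\]
The computations \eqref{eq:Sxoverlinex2} and \eqref{eq:LambdaFxoverlinex2} in the proof of Lemma~\ref{lem:bergman1} already show $\Sigma(x,\overline{x})=-S(x)$ and $T(x,\overline{x})=-i\Lambda F(x)$. Under the standing assumption that $\phi$ and $H$ are real analytic, $\Sigma$ and $T$ are holomorphic in $(y,z)$, so by uniqueness of analytic continuation off the maximal totally real submanifold $\overline{\Delta}$ we get $\Sigma=-\widetilde{S}$ and $T=-\widetilde{i\Lambda F}$, where the tilde denotes (entrywise) holomorphic extension; in the general smooth case \eqref{eq:almosthol} forces agreement to infinite order along $\overline{\Delta}$, which is all that the derivative computation below uses. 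Since $\Psi(0)=g^T(0)=\Id$ by \eqref{eq:metricasympt}, the left-hand side of \eqref{eq:LaplacianS} equals $\sum_m\partial_{z^m}\partial_{y^m}\Sigma(0)=-\sum_m\partial_{y^m}\partial_{z^m}\widetilde{S}(0)$, and similarly for \eqref{eq:LaplacianF} with $T$. Finally, for any real-analytic $f$ with holomorphic extension $\widetilde{f}$, writing $f$ as a convergent power series in $(y,\overline{y})$ and replacing $\overline{y}$ by $z$ gives $\partial_{y^m}\partial_{z^m}\widetilde{f}(0)=\partial_{y^m}\dbar_{y^m}f(0)$; together with \eqref{eq:Laplacian} and $g(0)=\Id$ this yields $\sum_m\partial_{y^m}\partial_{z^m}\widetilde{f}(0)=-\frac{1}{2}\Delta f(0)$, and hence \eqref{eq:LaplacianS} and \eqref{eq:LaplacianF}.

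I do not expect any single step to be hard; the main obstacle is the bookkeeping---keeping straight which of $x,y,z,\theta$ are independent variables and which are determined by the coordinate change \eqref{eq:coordinatechange}, and recognising that the two curvature-type expressions appearing in \eqref{eq:LaplacianS}--\eqref{eq:LaplacianF} are honest holomorphic extensions, so that $\partial_y\partial_z$ applied to them is exactly $\partial\dbar$ of the underlying scalar, resp.\ endomorphism-valued, curvature quantity. The passage from the real-analytic to the general smooth case via the almost-holomorphicity \eqref{eq:almosthol} is routine but should be spelled out.
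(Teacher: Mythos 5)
Your proposal is correct and follows essentially the same route as the paper: \eqref{eq:Delta0DeltaG'second} from the structure of the Taylor expansions in Lemma~\ref{lem:bergman0}, \eqref{eq:DthetaDyB1} from the chain rule through $z=z(x,y,\theta)$ together with Lemma~\ref{lem:propertiesz}, and \eqref{eq:LaplacianS}--\eqref{eq:LaplacianF} by recognising, via \eqref{eq:Sxoverlinex2} and \eqref{eq:LambdaFxoverlinex2}, that the inner expressions are the (almost) holomorphic extensions of $-S$ and $-i\Lambda F$, so that $\partial_y\partial_z$ at the origin reproduces $-g^{m,\overline{l}}\dbar_l\partial_m=\tfrac{1}{2}\Delta$. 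You merely spell out details the paper leaves implicit (including the smooth versus real-analytic case, which the paper handles by its standing assumption and \eqref{eq:almosthol}).
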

\begin{proof}
Formulae \eqref{eq:Delta0DeltaG'second} follow from~\eqref{eq:taylorDelta0} and~\eqref{eq:taylorDeltaG'}. Formula \eqref{eq:DthetaDyB1} follows from Lemma \ref{lem:propertiesz} and the equality
$$
D_\theta \cdot D_y(B_1)(0) = (\partial_{z^m}\partial_{z^k}B_1)(\partial_{\theta^j}z^m)(\partial_{y^j}z^k)(0).
$$
Using ~\eqref{eq:Sxoverlinex2} it follows that the LHS of \eqref{eq:LaplacianS} equals
$$
- g^{m,\overline{l}}\dbar_l\partial_m(S)(0) = \frac{1}{2}\Delta(S)(0).
$$
Finally, it follows from \eqref{eq:LambdaFxoverlinex2} that the LHS of \eqref{eq:LaplacianF} equals
$$
- g^{m,\overline{l}}\dbar_l\partial_m(i\Lambda F)(0) = \frac{1}{2}\Delta(i\Lambda F)(0).
$$
\end{proof}
Hence, the previous lemma combined with~\eqref{eq:DeltaGtheta} and \eqref{eq:Delta0yDeltaGRF} leads to
\begin{align*}
D_\theta \cdot D_y(B_1\Delta_G)(0) & = D_\theta \cdot D_y(B_1)(0) + B_1(0) D_\theta \cdot D_y(\Delta_G)(0)\\
& + \((\partial_{\theta^j} B_{1})(\partial_{y^j} \Delta_{G}) + (\partial_{\theta^j}\Delta_{G})(\partial_{y^j}B_1)\)(0)\\
& = - B_1^2(0).
\end{align*}
To conclude, we calculate the terms in
\begin{align*}
(D_\theta \cdot D_y)^2(\Delta_G)(0) & = D_\theta\cdot D_y \Bigg{(}(D_\theta \cdot D_y \Delta_0)\Delta_G' + \Delta_0 (D_\theta \cdot D_y \Delta_G')\\
& + (\partial_{\theta^j}\Delta_0)(\partial_{y^j}(\Delta_G') + (\partial_{y^j}\Delta_0)(\partial_{\theta^j}(\Delta_G'))\Bigg{)}(0)\\
& = \Bigg{(}(D_\theta \cdot D_y)^2(\Delta_0)\Id + (D_\theta \cdot D_y)^2(\Delta_G')\\
& + 2(D_\theta \cdot D_y\Delta_0)(D_\theta \cdot D_y\Delta_G')\\
& + 2(\partial_{\theta^j}\partial_{y^k}\Delta_0)(\partial_{\theta^k}\partial_{y^j}\Delta_G')\Bigg{)}(0),
\end{align*}
where for the last equality we have used~\eqref{eq:DeltaGtheta},~\eqref{eq:Delta0yDeltaGRF} and \eqref{eq:Delta0DeltaG'second}. For simplicity of the formulae, in the sequel we omit the evaluation of the functions at $0$. From Lemma \ref{lem:bergman1} we obtain
\begin{align*}
(D_\theta \cdot D_y\Delta_0)(D_\theta \cdot D_y\Delta_G') & = \frac{1}{2}S \cdot i \Lambda F\\
(\partial_{\theta^j}\partial_{y^k}\Delta_0)(\partial_{\theta^k}\partial_{y^j}\Delta_G') & = \frac{1}{2}F_{j,\overline{k}}\tr R_{k,\overline{j}},
\end{align*}
where $i\tr R$ is the Ricci form of $\omega$. To compute $(D_\theta D_y)^2\Delta_0$ at the origin, using Lemma \ref{lem:propertiesz} and Lemma \ref{lem:bergman1} we calculate
\begin{align*}
(D_\theta \cdot D_y)^2(c_1) & = \partial^2_{\theta^j,\theta^k}(\partial^2_{y^j,y^k}c_1)\\
& = \partial^2_{\theta^j,\theta^k}(\tr(\partial_{z^l}(\Psi^{-1}\partial_{y^j}\Psi))(x,z)\partial_{y^k}z^l)\\
& = \tr(\partial_{z^l}(\Psi^{-1}\partial_{y^j}\Psi))(0)\partial^2_{\theta^k,y^k}(\partial_{\theta^j}z^l)\\
& = - (\tr R)_{j,\overline{l}}\partial^2_{\theta^k,y^k}((\partial_z\theta)^{-1}_{l,j})\\
& = \frac{1}{2}(\tr R_{j,\overline{l}})(\partial_{\theta^k}(\Psi^{-1}\partial_{y^k}\Psi(y,z)))_{l,j}\\
& = - \frac{1}{2}(\tr R_{j,\overline{l}})(R_{k,\overline{k}})_{l,j} = - \frac{1}{2}|\tr R|^2,
\end{align*}
where $z = z(y,\theta)$ and we have used the expansion \eqref{eq:partialztheta} of to compute the partial derivatives of the $(l,j)$ component of the matrix $(\partial_z\theta)^{-1}(y,z)$ for the fifth equality. Similarly, using Lemmas \ref{lem:propertiesz}, \ref{lem:bergman1} and \ref{lem:bergman2}, we obtain (after some computation that is relegated to Appendix \ref{sec:appendix})
\begin{align*}
(D_\theta \cdot D_y)^2(c_2) & = \frac{1}{3}\Delta(S) + \frac{2}{3}|\tr R|^2,\\
(D_\theta \cdot D_y)^2(c_3) & = \frac{1}{4}S^2 + \frac{1}{4}|\tr R|^2,\\
(D_\theta \cdot D_y)^2(c_4) & = - \frac{1}{12}|\tr R|^2 - \frac{1}{12}|R|^2,\\
(D_\theta \cdot D_y)^2(d_1) & = - F_{j,\overline{l}}(R_{k,\overline{k}})_{l,j},\\
(D_\theta \cdot D_y)^2(d_2) & = \frac{1}{2}\Delta(i\Lambda F) + (R_{j,\overline{j}})_{k,m}F_{m,\overline{k}},\\
(D_\theta \cdot D_y)^2(d_2) & = - \Lambda F \Lambda F + F_{j,\overline{k}}F_{k,\overline{j}}
\end{align*}
and therefore
\begin{align*}
(D_\theta \cdot D_y)^2(\Delta_0) & = \frac{1}{3}\Delta(S) - \frac{1}{12}(|R|^2 - 4|\tr R|^2 - 3 S^2),\\
(D_\theta \cdot D_y)^2(\Delta_G') & = \frac{1}{2}\Delta(i\Lambda F) - \Lambda F \Lambda F + F_{j,\overline{k}}F_{k,\overline{j}}.
\end{align*}
From the formulae above we conclude that
\begin{align*}
B_2 & = \(B_1^2 - \frac{1}{2}(D_\theta \cdot D_y)(\Delta_G)\)\\
& = -\frac{1}{2}\Delta_{\dbar}(i\Lambda F) - \frac{1}{2}\Lambda F \Lambda F - \frac{1}{2}
F_{j,\overline{k}}F_{k,\overline{j}}\\
& + \frac{1}{2}S i \Lambda F - \frac{1}{2}F_{j,\overline{k}}\tr R_{k,\overline{j}}\\
& - \frac{1}{6}\Delta(S)\Id + \frac{1}{24}\(|R|^2 - 4|\tr R|^2 + 3 S^2\)\Id,
\end{align*}
where $\Delta_{\dbar}$ is the $\dbar$ Laplacian acting on smooth endomorphisms of $E$
\begin{equation}\label{eq:LaplacianH}
\Delta_{\dbar} = -i\Lambda\partial_H\dbar,
\end{equation}
determined by $\omega$, $H$ and the holomorphic structure on the bundle (see e.g. \cite[\S 1.2]{D-1}). Here we use that $2\Delta_{\dbar}$ and $\Delta$ have the same expression at the origin. This completes the proof of Theorem \ref{th:2}.

\begin{remark}\label{rem:notationdiff}
As mentioned, this expression for $B_2$ has been independently derived by Wang and by Xu.  Note that our notation for the Laplacians $\Delta$ and $\Delta_{\dbar}$ differs respectively by a factor of $-2$ and by a factor of $-1$ with respect to \cite[Formula (4.6)]{Lu}, \cite[Formula (4.6)]{WangLi} and \cite[Theorem~4.2]{Xu}. Furthermore, our $\Lambda$ differs by a factor of $i$ with respect to \cite[Theorem~4.2]{Xu}.
\end{remark}

For the proof of our main result in \secref{sec:ProofTheorem} we will use the compact formula
\begin{equation}\label{eq:trB2}
\begin{split}
\tr B_2 &= - \frac{1}{4}\Delta(i\Lambda \tr F) -\frac{1}{4}\tr\Lambda^2(F + \frac{1}{2}(\tr R)\Id)^2\\
& - \frac{r}{6}\Delta(S) + \frac{r}{48}\tr \Lambda^2 R^2,
\end{split}
\end{equation}
which follows easily from the identities
$$
\tr \gamma\wedge\beta\wedge\frac{\omega^{n-2}}{(n-2)!} = \frac{1}{2}(\Lambda^2 \tr \gamma\wedge\beta) \frac{\omega^n}{n!} = \tr(\gamma_{j,\overline{k}}\beta_{k,\overline{j}} + \Lambda\gamma\Lambda\beta)\frac{\omega^n}{n!},
$$
where $\gamma,\beta$ are arbitrary $(1,1)$ forms with values in the skew-adjoint endomorphisms of a unitary bundle. Note that \eqref{eq:trB2} recovers the second order term for the Riemann-Roch formula
$$
\int_X \tr B_k \frac{\omega^n}{n!} = \dim H^0(X,E\otimes L^{\otimes k}) = \int_X ch(L^{\otimes k}) ch(E) Td(X)
$$
by integration over $X$.

\section{Limits of balanced metrics}
\label{sec:ProofTheorem}

We now prove our main theorem and find the equations satisfied by limits of $\alpha$-balanced metrics.  Before doing so we give an account of the case of balanced metrics on manifolds which is simpler but illustrates the fundamental ideas.
\begin{lemma}\label{lem:fundamentallimitdonaldson}
  Suppose $b_{1,k}$ are real-valued functions on $X$, which converge to a limit $b_1$ pointwise as $k$ tends to infinity. Suppose moreover there is a pointwise expansion
$$ k^n + b_{1,k}k^{n-1}+ O\left(k^{n-2}\right)=p_k$$
where $p_k$ is a polynomial in $k$ that is constant (over $X$). Then $b_1=const$.
\end{lemma}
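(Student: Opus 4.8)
The plan is to argue pointwise: fix $x\in X$, extract the coefficients of the polynomial $p_k$ from the asymptotics \emph{at that single point}, and then compare with the hypothesis $b_{1,k}(x)\to b_1(x)$. The statement that $p_k$ is ``constant over $X$'' means precisely that there are real numbers $a_0,\dots,a_d$, \emph{independent of $x$}, with $p_k=\sum_{j=0}^d a_j k^j$ for all $k$. So the whole point is to identify $b_1(x)$ with one of these $x$-independent coefficients.

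First I would evaluate the given expansion at a fixed $x$. Since $b_{1,k}(x)$ converges as $k\to\infty$, it is bounded in $k$, so the hypothesis reads $p_k=k^n+O(k^{n-1})$ with the implicit constant depending on $x$. But the left-hand side is a fixed polynomial in $k$, and a polynomial whose values grow like $k^n+O(k^{n-1})$ must have degree $\leq n$ with leading coefficient $1$; hence $d\leq n$ and $a_n=1$.

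Next I would subtract $k^n$ and divide by $k^{n-1}$. The expansion becomes
$$b_{1,k}(x)+O(k^{-1})=a_{n-1}+O(k^{-1}),$$
so letting $k\to\infty$ gives $\lim_k b_{1,k}(x)=a_{n-1}$. On the other hand, by hypothesis $\lim_k b_{1,k}(x)=b_1(x)$. Therefore $b_1(x)=a_{n-1}$ for every $x\in X$, and since $a_{n-1}$ does not depend on $x$, the function $b_1$ is constant, as claimed.

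There is no real analytic obstacle here; the only point requiring a word of care is that the error term $O(k^{n-2})$ in the hypothesis is assumed only pointwise, so the constants implicit in the various $O$'s may depend on $x$. This is harmless, because every step above is carried out at one fixed $x$ at a time, and the identification $b_1(x)=a_{n-1}$ is obtained separately for each $x$ before the $x$-independence of $a_{n-1}$ is invoked to conclude.
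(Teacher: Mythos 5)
Your argument is correct and is essentially the paper's own proof: both write $p_k=k^n+a_{n-1}k^{n-1}+\cdots$, read the hypothesis pointwise as $b_{1,k}=a_{n-1}+O(1/k)$, and let $k\to\infty$ to conclude $b_1\equiv a_{n-1}$. The only difference is that you additionally verify from the pointwise asymptotics that $p_k$ has degree $n$ and leading coefficient $1$, a harmless extra step the paper leaves implicit.
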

\begin{proof}
  This is trivial, for writing $p_k = k^n + a_1 k^{n-1} + \cdots$, the hypothesis becomes $b_{1,k} = a_1 + O(1/k)$ pointwise and taking the limit as $k$ tends to infinity gives the result.
\end{proof}
Donaldson's key observation is that one can apply the above to the Bergman kernel expansion applied to a convergent sequence of metrics.  In fact, letting $\rho_k(h)$ denote the Bergman kernel of a metric $h$, the asymptotic expansion holds uniformly over a compact set of metrics (see Theorem~\ref{th:Bergman}).  Thus if $h_k$ is a sequence of hermitian metrics on $L$ converging to $h$ in $C^\infty$ such that $h_k$ is balanced with respect to $k$, then we have
$$ p_k = \rho_k(h_k) = k^n + \frac{S_k}{2} k^{n-1} + O(k^{n-2}),$$
pointwise for suitable $p_k$, which by the above Lemma implies that the scalar curvature of the limit metric $S_h$ is constant\medskip.

For triples $(X,L,E)$ we have seen in \eqref{prop:balancedmetrics} that the balanced condition can be reinterpreted in terms of two Bergman functions being constant.  For this there is a generalisation of the above lemma, which has an extra feature in the special case that the leading order terms of the two are related.  Fix polynomials
\begin{equation}\label{eq:polynomials}
\begin{split}
p_k &= k^n + a_1 k^{n-1} + a_1 k^{n-2} + \cdots \\
\tilde{p}_k &= k^n + \tilde{a}_1 k^{n-1} + \tilde{a}_2 k^{n-2} + \cdots
\end{split}
\end{equation}

\begin{lemma}\label{lem:fundamentallimittriple}
Suppose $b_{i,k}$ are real-valued functions, and $B_{i,k}$ are endormophism valued functions of rank $r$ on a manifold $X$ such that
  \begin{eqnarray*}
k^n + b_{1,k}k^{n-1} + b_{2,k}k^{n-2} + O\left(k^{n-3}\right) &=& p_k\\
k^n\Id + B_{1,k}k^{n-1} + B_{2,k}k^{n-2} + O\left(k^{n-3}\right) &=& \tilde{p_k} \Id
\end{eqnarray*}
pointwise on $X$. Assume also that, pointwise,  $b_{i,k}$ has a limit $b_i$ and $B_{i,k}$ a limit $B_i$ as $k$ tends to infinity.
\begin{enumerate}
\item The limits satisfy
$$ B_1=const \cdot\Id\quad\text{and}\quad b_1=const.$$
\item Suppose furthermore that for real constants $\chi_1$, $\chi_2$ we have
\begin{equation}
\chi_1 \tr B_{1,k} = \chi_2  b_{1,k}\quad \text{ for all } k.\label{eq:abstractcouplecondition}
\end{equation}
Then $\chi_1r\tilde{a}_1=\chi_2a_1$ and the limits in fact satisfy
$$B_1=const \cdot \Id \quad \text{and} \quad \chi_1 \tr(B_2) = \chi_2 b_2 + const.$$
\end{enumerate}
\end{lemma}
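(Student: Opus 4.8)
The plan is to compare the two asymptotic identities coefficient by coefficient in $k$, keeping in mind from the outset the one genuinely subtle point: the order $k^{n-2}$ coefficients $b_{2,k}$ and $\tr B_{2,k}$ do \emph{not} in general converge to $a_2$ and $r\tilde{a}_2$. Indeed, since $b_{1,k}$ is only known to approach $a_1$ at rate $O(1/k)$, the term $(b_{1,k}-a_1)k$ is $O(1)$ and contributes at order $k^{n-2}$; in fact its limit is exactly $a_2-b_2$. The role of the extra hypothesis \eqref{eq:abstractcouplecondition} is precisely that, in the particular combination $\chi_1\tr B_{2,k}-\chi_2 b_{2,k}$, these stray contributions cancel against one another.

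For part (1) there is nothing new. Dividing both hypotheses by $k^{n-1}$ and applying the argument of Lemma~\ref{lem:fundamentallimitdonaldson} (to the scalar identity, and componentwise in a local frame to the endomorphism-valued one) gives $b_{1,k}=a_1+O(1/k)$ and $B_{1,k}=\tilde{a}_1\Id+O(1/k)$ pointwise; letting $k\to\infty$ and using the assumed convergence yields $b_1=a_1$ and $B_1=\tilde{a}_1\Id$, both constant over $X$.

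For part (2), subtract $p_k$ from the first hypothesis and $\tilde{p}_k\Id$ from the second, take the trace of the latter, and divide both by $k^{n-2}$; using that $b_{2,k}$ and $\tr B_{2,k}$ are bounded this produces, pointwise,
\[
(b_{1,k}-a_1)k+(b_{2,k}-a_2)=O(1/k),\qquad (\tr B_{1,k}-r\tilde{a}_1)k+(\tr B_{2,k}-r\tilde{a}_2)=O(1/k).
\]
Multiplying the second identity by $\chi_1$, the first by $\chi_2$, and subtracting, the hypothesis $\chi_1\tr B_{1,k}=\chi_2 b_{1,k}$ cancels every occurrence of $b_{1,k}$ and $\tr B_{1,k}$, leaving
\[
(\chi_2 a_1-\chi_1 r\tilde{a}_1)\,k+\bigl(\chi_1(\tr B_{2,k}-r\tilde{a}_2)-\chi_2(b_{2,k}-a_2)\bigr)=O(1/k).
\]
Since $b_{2,k}$ and $B_{2,k}$ converge by assumption, the bracketed term has a finite limit while the right-hand side tends to $0$; hence the coefficient of $k$ must vanish, which is exactly the asserted relation $\chi_1 r\tilde{a}_1=\chi_2 a_1$. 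Feeding this back and passing to the limit gives $\chi_1\tr B_2-\chi_2 b_2=\chi_1 r\tilde{a}_2-\chi_2 a_2$, a constant, which together with $B_1=\tilde{a}_1\Id$ from part (1) is the conclusion.

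The only place to be careful — and the step I would flag — is resisting the temptation to argue coefficientwise and declare $b_{2,k}\to a_2$ and $\tr B_{2,k}\to r\tilde{a}_2$ separately; this is false in general, and it is precisely the forced vanishing of the order-$k$ remainder (which simultaneously delivers $\chi_1 r\tilde{a}_1=\chi_2 a_1$) that makes the coupled statement about $B_2$ and $b_2$ come out. Everything beyond that is routine bookkeeping in powers of $k$.
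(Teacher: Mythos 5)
Your argument is correct and is essentially the paper's own proof: both rest on rearranging the two expansions so that the coupling hypothesis $\chi_1\tr B_{1,k}=\chi_2 b_{1,k}$ forces equality of the order-$k^{n-1}$ coefficients (giving $\chi_1 r\tilde{a}_1=\chi_2 a_1$) and then passing to the limit at the next order to obtain $\chi_1\tr B_2=\chi_2 b_2+\mathrm{const}$, with the correct observation that $b_{2,k}$ and $\tr B_{2,k}$ need not converge to $a_2$ and $r\tilde{a}_2$ individually. The only difference is cosmetic: the paper solves for $B_{1,k}$ and $b_{1,k}$ and substitutes into the coupling identity, whereas you subtract the polynomials first and cancel, which is the same bookkeeping.
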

\begin{proof}
We have
$B_{1,k} = \tilde{a}_1\Id + \tilde{a}_2/k\Id - B_{2,k}/k + O(k^{-2})$   and so taking the limit gives $B_1$ is constant, and similarly for $b_1$.  So assume \eqref{eq:abstractcouplecondition} holds.  Then
\begin{eqnarray*}
\chi_1\left(r \tilde{a}_1 + \frac{r \tilde{a}_2}{k} - \frac{\tr B_{2,k}}{k} + O(k^{-2})\right) &=& \chi_1 \tr B_{1,k} = \chi_2 b_{1,k} \\&=& \chi_2 (a_1 + \frac{a_2}{k} - \frac{b_{2,k}}{k} + O(k^{-2}))
\end{eqnarray*}
So, the top terms are equal and taking the limit as $k$ tends to infinity yields the desired result.
\end{proof}
We apply this to the Bergman kernels appearing in the balanced condition.  To do this we need to calculate the relevant coefficients which we do now.  To start with we collect the relevant topological constants by defining
\begin{eqnarray}\label{eq:definitionoftopologicalconstants}
 \beta_i &=& \frac{(n-i)!}{r^i} \sum_{p=0}^{n+1} p \binom{n+1-p}{i}\alpha_pr^{n+1-p},\\
 \gamma_i &=& \frac{(n-i)!}{r^i} \sum_{p=0}^{n+1} (n+1-p) \binom{n-p}{i}\alpha_pr^{n-p},\\
 \lambda' &=& \frac{\hat S}{2} - r\lambda \frac{\beta_1}{\beta_0},\\
 \kappa &=& 4r\(\frac{\gamma_2}{\gamma_0} - \frac{\beta_2}{\beta_0}\).
\end{eqnarray}

\begin{theorem}
Fix constants $\alpha_i$ so that $\beta_0$ and $\gamma_0$ are both non-zero. Let $(h_k,H_k)$ be a sequence of metrics on $(X,L,E)$ which converge in $C^\infty$ to $(h,H)$ as $k$ tends to infinity. Suppose furthermore that $(h_k,H_k)$ is $\alpha$-balanced with respect to $k$ for $k$ sufficiently large. Then
\begin{enumerate}
  \item if $\beta_1(\beta_0 + r\gamma_0) \neq 0$ the limit is a solution of
  \begin{equation}\label{eq:decoupledalpha}
i \Lambda F_H = \lambda \Id, \qquad S_\omega = \hat S.
\end{equation}
\item if $\beta_1(\beta_0 + r\gamma_0) = 0$ then the limit is a solution of
\begin{equation}\label{eq:coupledalpha1}
i\(\Lambda F_H - \frac{1}{r}\Lambda\tr F_H\Id\) + \(\frac{1}{2}S_{\omega} - \frac{\beta_1}{\beta_0}i\Lambda \tr F_H\)\Id = \lambda' \Id,
\end{equation}
\begin{equation}\label{eq:coupledalpha2}
(\Delta - 4\lambda')i\Lambda \tr F_H - \tr\Lambda^2(F_H^2 + F_H \wedge \tr R_\omega) - \kappa\Lambda^2 (\tr F_H)^2 = c,
\end{equation}
\end{enumerate}
where $c$ is a real constant.
\end{theorem}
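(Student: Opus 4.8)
The plan is to combine the balanced-metric characterisation of Proposition~\ref{prop:balancedmetrics} with the abstract limit Lemma~\ref{lem:fundamentallimittriple}, feeding in the Bergman expansions of Theorem~\ref{th:2}. First I would observe that the two Bergman functions appearing in the balanced condition \eqref{eq:bergmanconstant} are \emph{not} the ones normalised by $\omega^n/n!$ used in Theorem~\ref{th:2}; they are built from the volume forms $dV_1/\int_X dV_1$ and $dV_2/\int_X dV_2$. So the first step is a bookkeeping reduction: writing $dV_1 = \phi_1 \,\omega^n/n!$ and $dV_2 = \phi_2\,\omega^n/n!$ for smooth positive functions $\phi_j$ (positive for $k\gg 0$ under the hypothesis that $\beta_0,\gamma_0\neq 0$, which is exactly condition~\eqref{eq:betasnonzero}), one has $\rho_k(h,H) = \phi_1^{-1} B_k^{L}$ and $B_k(h,H) = \phi_2^{-1} B_k^{E\otimes L^k}$ up to the global normalising integrals, where $B_k^{L}$ and $B_k^{E\otimes L^k}$ are the $\omega^n/n!$-normalised kernels. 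Expanding $\phi_1,\phi_2$ in powers of $k$ (they are polynomial in $k$ with leading terms governed by $\beta_0$, $\gamma_0$ and subleading terms by $\beta_1$, $\gamma_1$, $S_\omega$, $i\Lambda\tr F_H$) and dividing, I get pointwise asymptotic expansions
\begin{equation*}
k^{-n}\rho_k(h_k,H_k) = 1 + b_{1,k}k^{-1} + b_{2,k}k^{-2} + O(k^{-3}), \quad k^{-n}B_k(h_k,H_k) = \Id + B_{1,k}k^{-1} + B_{2,k}k^{-2} + O(k^{-3}),
\end{equation*}
where $b_{1,k}, b_{2,k}, B_{1,k}, B_{2,k}$ are explicit in the curvature data of $h_k,H_k$, $S_{\omega_k}$, $R_{\omega_k}$, $\tr R_{\omega_k}$, and in the $\alpha$-constants through the $\beta_i,\gamma_i$. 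The balanced condition says the left-hand sides are constant multiples of $N_k$ and $M_k/r$, i.e.\ polynomials in $k$ constant over $X$, so Lemma~\ref{lem:fundamentallimittriple} applies with $p_k = N_k k^{-n}$ and $\tilde p_k = (M_k/r) k^{-n}$.

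Next I would extract the limits. Part~(1) of Lemma~\ref{lem:fundamentallimittriple} immediately gives that $B_1 := \lim B_{1,k}$ is a constant multiple of $\Id$ and $b_1 := \lim b_{1,k}$ is constant. Unwinding the definitions of $b_{1,k}$ and $B_{1,k}$ (which involve the $B_1$ term of Theorem~\ref{th:2}, namely $i\Lambda F_H + \tfrac{S_\omega}{2}\Id$, together with the $\phi_1,\phi_2$ corrections $\sim \beta_1/\beta_0$ and $\gamma_1/\gamma_0$), one computes that $B_1 = \mathrm{const}\cdot\Id$ forces a linear combination of $i(\Lambda F_H - \tfrac1r \Lambda\tr F_H\,\Id)$ with a multiple of $\Id$ to vanish, and $b_1 = \mathrm{const}$ forces a scalar equation relating $S_\omega$ and $i\Lambda\tr F_H$. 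The dichotomy is then exactly whether these two conditions are \emph{independent}. When $\beta_1(\beta_0 + r\gamma_0)\neq 0$ the traceless part and the trace part decouple cleanly and one reads off directly $i\Lambda F_H = \lambda\Id$ and $S_\omega = \hat S$, giving~\eqref{eq:decoupledalpha}; here $\lambda$ and $\hat S$ are pinned down topologically by integrating against $\omega^n/n!$ and comparing with the leading Riemann--Roch coefficients (equivalently $N_k = \int_X \ch(L^k)\Td(X)$, $M_k = \int_X \ch(E)\ch(L^k)\Td(X)$).

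In the degenerate case $\beta_1(\beta_0 + r\gamma_0) = 0$ the first-order equations are no longer independent: the relation \eqref{eq:abstractcouplecondition} of the Lemma holds, with $\chi_1,\chi_2$ chosen so that $\chi_1\tr B_{1,k} = \chi_2 b_{1,k}$ identically (this is where the vanishing of $\beta_1(\beta_0+r\gamma_0)$ enters — it is precisely the compatibility needed for such a proportionality to exist between the first-order data of the two Bergman functions after the $\phi_j$-rescaling). Part~(2) of Lemma~\ref{lem:fundamentallimittriple} then upgrades the conclusion: $B_1 = \mathrm{const}\cdot\Id$ still gives the endomorphism equation~\eqref{eq:coupledalpha1} (with $\lambda'$ as in~\eqref{eq:definitionoftopologicalconstants}), and in addition $\chi_1\tr B_2 = \chi_2 b_2 + \mathrm{const}$, a genuine \emph{second-order} constraint. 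Substituting the explicit $B_2$ of Theorem~\ref{th:2} — via the compact trace formula \eqref{eq:trB2} for $\tr B_2$ — together with the expansion of the second-order parts $b_{2,k}$, $B_{2,k}$ coming from the $O(k^{n-2})$ terms of $\phi_1^{-1}B_k^L$ and $\phi_2^{-1}B_k^{E\otimes L^k}$, and using the already-established first-order equation to simplify, yields after rearrangement the scalar equation~\eqref{eq:coupledalpha2}, with $\kappa$ and $c$ the stated topological/real constants. I expect the main obstacle to be purely computational: carefully tracking the $\phi_j$-rescaling through both orders so that the second-order coefficients $b_{2,k}$ and $B_{2,k}$ come out correctly, and verifying that the messy combination of $\beta_i,\gamma_i$, $S_\omega^2$, $|R_\omega|^2$, $|\tr R_\omega|^2$, $\tr\Lambda^2(F_H^2 + F_H\wedge\tr R_\omega)$ and $\Lambda^2(\tr F_H)^2$ terms collapses precisely into the coefficient $\kappa = 4r(\gamma_2/\gamma_0 - \beta_2/\beta_0)$ and the Laplacian term $(\Delta - 4\lambda')i\Lambda\tr F_H$; the conceptual structure (Proposition~\ref{prop:balancedmetrics} $+$ Lemma~\ref{lem:fundamentallimittriple} $+$ Theorem~\ref{th:2}) is otherwise straightforward.
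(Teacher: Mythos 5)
Your proposal follows essentially the same route as the paper: reduce the balanced condition (Proposition~\ref{prop:balancedmetrics}) to the constancy of two Bergman functions, rescale by the $k$-expansions of the volume-form factors $\varphi_1,\varphi_2$ (with $\beta_0,\gamma_0\neq 0$ guaranteeing positivity for $k\gg 0$), and feed the resulting pointwise expansions into Lemma~\ref{lem:fundamentallimittriple}, with the dichotomy governed by whether $\chi_1\tr B_{1,k}=\chi_2 b_{1,k}$ (the paper's choice $\chi_1=1$, $\chi_2=r$) holds, i.e.\ by the vanishing of $\beta_1(\beta_0+r\gamma_0)$, and with the $\tr B_2$ formula \eqref{eq:trB2} producing the second-order coupled equation. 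This is the paper's own argument, so the proposal is correct in both structure and the key computational inputs.
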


Proceeding to the proof, recall that the top forms $dV_i$ used to define the balanced condition depend on $k$.  To control this, define functions $\varphi_i = \varphi_i(k)$ by
\begin{align*}
\varphi_1 \omega_k^{[n]} = dV_1 &:= \sum_{p=1}^{n+1} p\alpha_p(k\omega_k)^{p-1}\wedge(kr\omega_k + i\tr F_k)^{n+1-p},\\
\varphi_2 \omega_k^{[n]} = dV_2 &:= \sum_{p=0}^n(n+1-p)\alpha_p(k\omega_k)^p\wedge(kr\omega_k + i\tr F_k)^{n-p}.
\end{align*}

\begin{lemma}\label{lem:varphiexpansion}
We have
\begin{eqnarray*}\label{eq:lemvarphiexpansion1}
\varphi_1 &=& \sum_{j=0}^n f_jk^{n-j},\\
\varphi_2 &=& \sum_{j=0}^n g_jk^{n-j},
\end{eqnarray*}
where
\begin{eqnarray*}
  f_0 &=& \beta_0, \quad f_1 = \beta_1 \Lambda_ki\tr F_k,\quad \text{ and}\quad  f_2 =\beta_2 \Lambda^2_k(i\tr F_k)^2,\\
g_0 &=& \gamma_0, \quad g_1 = \gamma_1 \Lambda_k i\tr F_k,\quad \text{ and}\quad g_2=\gamma_2 \Lambda^2_k(i\tr F_k)^2.
\end{eqnarray*}
\end{lemma}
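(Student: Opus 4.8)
The plan is to obtain both expansions by expanding the defining wedge products of $dV_1$ and $dV_2$ with the binomial theorem and then rewriting the resulting top-degree forms as multiples of $\omega_k^{[n]}$. Since $\omega_k$ and $i\tr F_k$ are of type $(1,1)$, they are of even degree and commute in the exterior algebra, so
$$
(kr\omega_k + i\tr F_k)^{m} = \sum_{i=0}^{m}\binom{m}{i}(kr)^{m-i}\,\omega_k^{m-i}\wedge(i\tr F_k)^{i}.
$$
Inserting this with $m = n+1-p$ into the definition of $dV_1$, together with $(k\omega_k)^{p-1} = k^{p-1}\omega_k^{p-1}$, one sees that every summand contributes to the power $k^{n-i}$ regardless of $p$, and collecting terms gives
$$
dV_1 = \sum_{i=0}^{n} k^{n-i}\Big(\,\sum_{p} p\,\tbinom{n+1-p}{i}\alpha_p r^{n+1-p-i}\Big)\,(i\tr F_k)^{i}\wedge\omega_k^{n-i},
$$
and likewise for $dV_2$ with $m = n-p$, the weight $n+1-p$ in place of $p$, and $\binom{n-p}{i}$ in place of $\binom{n+1-p}{i}$. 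In particular, for fixed metrics, $dV_j$ is an exact degree-$n$ polynomial in $k$ with form-valued coefficients (these coefficients still vary with $k$ through $h_k,H_k$, so the conclusion is a finite identity rather than an asymptotic statement; only the leading term is a genuine constant).

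The second ingredient is the pointwise identity $\Phi\wedge\omega_k^{n-i}/(n-i)! = \big(\Lambda_k^{i}\Phi/i!\big)\,\omega_k^{[n]}$ for an arbitrary $(i,i)$-form $\Phi$, which for $i=0,1$ is immediate and for $i=2$ is the scalar case of the identity recorded just after \eqref{eq:trB2}; these are the only cases needed for the three leading coefficients $f_0,f_1,f_2$ and $g_0,g_1,g_2$. Applying it with $\Phi = (i\tr F_k)^{i}$ and dividing through by $\omega_k^{[n]}$ turns the $k^{n-i}$ coefficient of $\varphi_1$ into $(n-i)!\,\tfrac{1}{i!}\big(\sum_p p\,\binom{n+1-p}{i}\alpha_p r^{n+1-p-i}\big)\,\Lambda_k^{i}\big((i\tr F_k)^{i}\big)$. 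Recognising from \eqref{eq:definitionoftopologicalconstants} that $\sum_p p\,\binom{n+1-p}{i}\alpha_p r^{n+1-p-i} = \beta_i/(n-i)!$ — and, for $\varphi_2$, that $\sum_p (n+1-p)\,\binom{n-p}{i}\alpha_p r^{n-p-i} = \gamma_i/(n-i)!$ — the factorials cancel and one reads off $f_i = \beta_i\,\Lambda_k^{i}(i\tr F_k)^{i}$ and $g_i = \gamma_i\,\Lambda_k^{i}(i\tr F_k)^{i}$ for $i=0,1,2$.

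There is no real obstacle: the argument is mechanical once the binomial expansion and the contraction identity are in hand. The only delicate point is the bookkeeping of numerical constants — the binomial factors $\binom{n+1-p}{i}$, the $(n-i)!$ produced by the contraction identity, and the $1/i!$ from iterated contraction — and verifying that they assemble precisely into the normalisation $(n-i)!/r^{i}$ that \eqref{eq:definitionoftopologicalconstants} builds into $\beta_i$ and $\gamma_i$ (which is exactly why those definitions carry that prefactor); this is also where the $1/i!$ gets absorbed into the meaning of the symbol $\Lambda_k^{i}(\,\cdot\,)^{i}$ used in the statement.
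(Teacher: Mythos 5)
Your proof is correct and is essentially the paper's own argument: the paper merely records that the lemma ``follows from an elementary calculation using the binomial expansion'', which is precisely what you have carried out, supplemented by the standard pointwise identity $\Phi\wedge\omega_k^{n-i}/(n-i)!=\bigl(\Lambda_k^{i}\Phi/i!\bigr)\,\omega_k^{[n]}$ (the $i=2$ case being the identity the paper states after \eqref{eq:trB2}). The one point to be aware of is the residual $1/i!$ you flag at $i=2$: with the paper's literal definition of $\Lambda$ your computation yields $f_2=\tfrac{1}{2}\beta_2\,\Lambda_k^2(i\tr F_k)^2$ and $g_2=\tfrac{1}{2}\gamma_2\,\Lambda_k^2(i\tr F_k)^2$, so either the symbol $\Lambda_k^2(\cdot)^2$ in the statement is understood to absorb the $1/2!$, as you suggest, or the stated $f_2,g_2$ (and hence the constant $\kappa$ in \eqref{eq:definitionoftopologicalconstants}) carry a harmless factor-of-two normalisation discrepancy -- your derivation itself is sound.
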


The proof follows from an elementary calculation using the binomial expansion. To use the Bergman kernel expansion from ~\secref{sec:Bergman}, we observe that since $(h_k,H_k)$ converges and
\begin{equation}\label{eq:betasnonzero}
\beta_0 \neq 0, \qquad \gamma_0 \neq 0,
\end{equation}
it follows from Lemma \ref{lem:varphiexpansion} that $dV_j$ are volume forms for large $k \gg 0$, so the balance condition for $(h_k,H_k)$ makes sense. Moreover,
\begin{equation}\label{eq:varphijpositive}
\varphi_j' = \frac{\varphi_j}{\int_XdV_j} >0.
\end{equation}

\begin{lemma}\label{lem:expansion}
There exists $C^\infty$ expansions
\begin{eqnarray*}
\rho_k(h_k,H_k)\varphi_1'(k) &=& \frac{k^n}{(2\pi)^n}(1 + c_1k^{-1} + c_2 k^{-2} + O(k^{-3}))\\
B_k(h_k,H_k)\varphi_2'(k) &=& \frac{k^n}{(2\pi)^n}(\Id + D_1 k^{-1} + D_2 k^{-2} + O(k^{-3}))
\end{eqnarray*}
\end{lemma}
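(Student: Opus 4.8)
The plan is to derive Lemma~\ref{lem:expansion} from Theorem~\ref{th:Bergman} (applied with $E$ trivial of rank one for the first expansion, and directly for the second) together with Lemma~\ref{lem:varphiexpansion}. The one genuine point is that the volume forms $dV_j/\int_XdV_j$ entering the definitions of $\rho_k(h_k,H_k)$ and $B_k(h_k,H_k)$ are \emph{not} the natural forms $\omega_k^{[n]}=\omega_k^n/n!$ of $\omega_k=iF_{h_k}$, which is the volume form for which Theorem~\ref{th:Bergman} is stated.

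First I would pin down the normalising factors $\varphi_j'$. Since $(h_k,H_k)\to(h,H)$ in $C^\infty$, the forms $\omega_k$ and $i\tr F_{H_k}$ are bounded in every $C^m$ and converge; by Lemma~\ref{lem:varphiexpansion} (whose pattern continues: $f_i=\beta_i\Lambda_k^i(i\tr F_{H_k})^i$ and $g_i=\gamma_i\Lambda_k^i(i\tr F_{H_k})^i$) the functions $\varphi_j$ are polynomials in $k$ with fixed smooth coefficients, $f_0=\beta_0$, $g_0=\gamma_0$. Integrating $\varphi_j\,\omega_k^{[n]}=dV_j$ over $X$ and using that each $\int_X\Lambda_k^i(i\tr F_{H_k})^i\,\omega_k^{[n]}=\int_X(i\tr F_{H_k})^i\wedge\omega_k^{[n-i]}$ is a pairing of fixed cohomology classes, hence independent of $k$, shows $\int_XdV_j$ is a polynomial in $k$ of degree $n$ with leading coefficient $\beta_0\Vol$, resp.\ $\gamma_0\Vol$, where $\Vol=\int_X\omega^{[n]}$. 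Hence $\varphi_j'=\varphi_j/\int_XdV_j$ has a $C^\infty$ expansion $\varphi_j'=\frac1{\Vol}\bigl(1+O(k^{-1})\bigr)$ whose sub-leading coefficients are explicit in $\Lambda_k i\tr F_{H_k}$, $\beta_i$, $\gamma_i$; in particular $\varphi_j'>0$ and $dV_j$ is a genuine volume form for $k\gg0$, as asserted around \eqref{eq:betasnonzero}--\eqref{eq:varphijpositive}.

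The key device is to absorb the conformal factor into the metric on $L$. Set $\hat h_{j,k}=(\varphi_j')^{1/k}h_k$; since $\tfrac1k\log\varphi_j'=-\tfrac{\log\Vol}{k}+O(k^{-2})\to0$ in $C^\infty$, the metrics $\hat h_{j,k}$ converge to $h$ in $C^\infty$, with curvatures $\hat\omega_{j,k}:=iF_{\hat h_{j,k}}=\omega_k-\tfrac1k i\partial\dbar\log\varphi_j'=\omega_k+O(k^{-2})$. Because $\hat h_{j,k}^{\,k}=\varphi_j'\,h_k^{\,k}$, the $L^2$ inner product built from $(\hat h_{1,k}^{\,k},\omega_k^{[n]})$ is exactly the one defining $\rho_k(h_k,H_k)$, and evaluating the fibrewise norms with $\hat h_{1,k}^{\,k}$ gives the pointwise identity $\rho_k(h_k,H_k)\,\varphi_1'=$ [Bergman function of $H^0(L^{\otimes k})$ for $(\hat h_{1,k}^{\,k},\omega_k^{[n]})$]; likewise $B_k(h_k,H_k)\,\varphi_2'=$ [Bergman endomorphism of $H^0(E\otimes L^{\otimes k})$ for $(H_k\otimes\hat h_{2,k}^{\,k},\omega_k^{[n]})$]. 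Now apply Theorem~\ref{th:Bergman} to the convergent families $(\hat h_{j,k},H_k)$: this yields uniform $C^\infty$ expansions with respect to the natural forms $\hat\omega_{j,k}^{[n]}$, and since $\omega_k^{[n]}/\hat\omega_{j,k}^{[n]}=1+O(k^{-2})$ in $C^\infty$, the passage to the volume form $\omega_k^{[n]}$ changes the Bergman functions only by an explicit amount carried by the $k^{n-2}$ and lower coefficients (equivalently, repeating the rescaling finitely often drives the residual volume mismatch below $k^{-N}$ for any $N$). Multiplying by $\varphi_j'$ and re-collecting powers of $k$ using its expansion from the previous paragraph produces the two expansions of the statement, with leading term $k^n/(2\pi)^n$; the coefficients $c_i$, $D_i$ come out as polynomials in $S_{\omega_k}$, $\Lambda_k F_{H_k}$, $\Lambda_k i\tr F_{H_k}$, $R_{\omega_k}$ and $\beta_i,\gamma_i$, in the shape needed by the subsequent lemmas.

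The main obstacle is precisely this reconciliation of volume forms: Theorem~\ref{th:Bergman} is attached to the Riemannian volume of the curvature of the metric one chooses on $L$, whereas the balanced condition involves $dV_j/\int_XdV_j$, which differs from $\omega_k^{[n]}$ by the non-constant factor $\varphi_j'$. The rescaling trick reduces this to a perturbation of the volume form that is $C^\infty$-small of order $k^{-2}$ (because $\varphi_j'$ is $C^\infty$-close to a constant), which is harmless; what remains is bookkeeping — tracking the uniformity of all remainders over the compact families $\{(\hat h_{j,k},H_k)\}$, which is exactly the ``uniform in $k$'' clause of Theorem~\ref{th:Bergman}. (An alternative is to quote a change-of-volume-form version of the Bergman expansion directly and then substitute the $k$-expansion of $\varphi_j'$; this leads to the same coefficients.)
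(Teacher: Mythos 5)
Your starting point coincides with the paper's: the observation that, since the $L^2$ inner products are built from $h_k^k$ (resp.\ $H_k\otimes h_k^k$) and the normalised volume form $\varphi_j'\,\omega_k^{[n]}$, the products $\rho_k(h_k,H_k)\varphi_1'$ and $B_k(h_k,H_k)\varphi_2'$ are themselves honest Bergman functions for a modified fibre metric taken with respect to $\omega_k^{[n]}$. Where you diverge is in how the density is absorbed. The paper leaves the metric on $L$ untouched and treats $\varphi_1'$ as a ($k$-dependent but $C^m$-bounded, bounded below) hermitian metric on a trivial rank-one twist (resp.\ replaces $H_k$ by $H_k\varphi_2'$), so Theorem \ref{th:Bergman} applies verbatim with the same $h_k$ and hence the same reference volume form $\omega_k^{[n]}$; the entire correction enters through the twist curvature $\dbar\partial\log\varphi_j=k^{-1}\beta_0^{-1}\dbar\partial f_1+O(k^{-2})$ (resp.\ with $\gamma_0,g_1$), which is then expanded via Lemma \ref{lem:varphiexpansion}. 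No volume-form mismatch ever arises. You instead take the $k$-th root, $\hat h_{j,k}=(\varphi_j')^{1/k}h_k$, which is what creates the mismatch you then have to argue away: Theorem \ref{th:Bergman} applied to $\hat h_{j,k}$ is an expansion with respect to $\hat\omega_{j,k}^{[n]}$, not $\omega_k^{[n]}$.

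The gap is in how you dispose of that mismatch. The assertion that ``the passage to the volume form $\omega_k^{[n]}$ changes the Bergman functions only by an explicit amount carried by the $k^{n-2}$ and lower coefficients'' is exactly what needs proof: the Bergman function is not local in the volume form, so a relative $O(k^{-2})$ change of measure does not obviously produce a computable pointwise change. It can be justified by writing $\omega_k^{[n]}=u_k\,\hat\omega_{j,k}^{[n]}$ and reusing the identity you already invoked, namely that rescaling the measure by $u_k$ is the same as twisting the fibre metric by $u_k$ and dividing the resulting Bergman function by $u_k$; the new twist has curvature $\dbar\partial\log u_k=O(k^{-3})$ and is harmless, but the prefactor $u_k^{-1}=1+\tfrac{1}{2\beta_0 k^{2}}\Delta_k f_1+O(k^{-3})$ is \emph{not} an error term: it is precisely the $\tfrac{1}{2\beta_0}\Delta_k(f_1)$ contribution to $c_2$ (and its analogue in $\tr D_2$) that the main theorem later uses, and at that point one has in effect reconstructed the paper's argument. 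For the same reason the parenthetical ``repeating the rescaling finitely often drives the residual volume mismatch below $k^{-N}$'' is not a substitute: each rescaling step leaves such an explicit pointwise prefactor behind, the first of which sits exactly at order $k^{n-2}$, and after stopping one would still need a separate comparison lemma for Bergman kernels of $O(k^{-N})$-close inner products (which the paper never needs, and which at the $C^\infty$ rather than $C^0$ level is not given by the extremal characterisation). So the strategy is viable, but as written the crucial $k^{n-2}$ bookkeeping is asserted rather than carried out; the clean way to complete it is the paper's device of keeping $h_k$ fixed and putting $\varphi_j'$ on the twisting bundle, so that the reference volume form never changes.
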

where
\begin{eqnarray*}
  c_1 &=& \frac{S_k}{2}, \\
  c_2 &=& \frac{1}{2\beta_0}\Delta_k(f_1) - \frac{1}{16}\Lambda^2_k (\tr R_k)^2 - \frac{1}{6}\Delta_k(S_k)  + \frac{1}{48}\tr \Lambda_k^2 R_k^2, \\
 D_1 &=& i\Lambda_kF_k + \frac{S_k}{2}\Id, \\
\tr D_2 &=& \frac{r}{2\gamma_0}\Delta_k(g_1) - \frac{1}{4}\Delta_k(i\Lambda_k \tr F_k)- \frac{1}{4}\tr\Lambda^2_k(F_k + \frac{1}{2}(\tr R_k) Id)^2\\
& & - \frac{r}{6}\Delta_k S_k + \frac{r}{48}\tr \Lambda_k^2 R_k^2.
\end{eqnarray*}
\begin{proof}
If $\{t_j\}$ is a basis of $H^0(L^{\otimes k})$ used to compute $\rho_k(h_k,H_k)$ \eqref{eq:bergmanfunction1} then it is orthonormal with respect to the $L^2$ metric induced by $\omega^{[n]}$ and $h_k^k \varphi'_1$. Now, by Theorem~\ref{th:Bergman} there exists a $C^\infty$ expansion
\begin{equation}
\rho_k(h_k,H_k)\varphi'_1 = \sum_j |t_j|^2_{h_k^k \varphi_1'} = \frac{k^n}{(2\pi)^n}(\Id + \tilde{b}_1k^{-1} + \tilde{b}_2 k^{-2} + O(k^{-3}))
\end{equation}
where
\begin{equation}\label{eq:thmbB12}
\begin{split}
\tilde{b}_1 & = i\Lambda_k(\dbar\partial \log \varphi_1) + \frac{S_k}{2} = \frac{1}{2}\Delta_k \log \varphi_1 + \frac{S_k}{2},\\
\tilde{b}_2 & = - \frac{1}{4}\Delta_k(i\Lambda_k \dbar\partial\log \varphi_1) - \frac{1}{4}\Lambda^2_k(\dbar\partial \log \varphi_1 + \frac{1}{2}\tr R_k)^2\\
& - \frac{1}{6}\Delta_k S_k + \frac{1}{48}\tr \Lambda_k^2 R_k^2
\end{split}
\end{equation}
Since $(h_k,H_k)$ converge in $C^\infty$, the functions $f_j=f_j(k)$ in Lemma~\ref{lem:varphiexpansion} can be bounded in $C^m$ by a constant independent of $k$ and therefore we have a $C^\infty$ expansion
$$
\dbar\partial \log \varphi_1 = k^{-1} \beta_0^{-1}\dbar\partial(f_1) + O(k^{-2}),
$$
which gives the first part.  Similarly for the endomorphism part we obtain
$$B_k(h_k,H_k)\varphi_2' = \sum_j (s_js_j^*)_{H_k \otimes h_k^k\varphi_2'} = \frac{k^n}{(2\pi)^n}(\Id + \tilde{B}_1k^{-1} + \tilde{B}_2 k^{-2} + O(k^{-3}))$$
where
\begin{equation}
  \begin{split}
    \tilde{B}_1 & = i\Lambda_kF_k + \frac{1}{2}\Delta_k \log \varphi_2\Id + \frac{S_k}{2}\Id,\\
\tr \tilde{B}_2 & = - \frac{1}{4}\Delta_k(i\Lambda_k \tr F_k + \frac{r}{2}\Delta_k \log \varphi_2)\\
& - \frac{1}{4}\tr\Lambda^2_k(F_k + \dbar\partial \log \varphi_2\Id + \frac{1}{2}(\tr R_k)Id)^2 - \frac{r}{6}\Delta_k S_k + \frac{r}{48}\tr \Lambda_k^2 R_k^2
  \end{split}
\end{equation}
which, by an analogous expansion for $\dbar\partial \log \varphi_2$, gives the desired conclusion.
\end{proof}

\begin{proof}[Proof of Theorem]  By the balanced hypothesis and Lemma \ref{lem:expansion} we have pointwise asymptotics
\begin{equation}\label{eq:thm0.1}
(k^n + c_1k^{n-1} + c_2 k^{n-2} + O(k^{n-3}))\varphi_1^{-1} = \frac{(2\pi)^nN_k}{\int_XdV_1},
\end{equation}
\begin{equation}\label{eq:thm0.2}
(\Id k^n + D_1k^{n-1} + D_2 k^{n-2} + O(k^{n-3}))\varphi_2^{-1} = \frac{(2\pi)^nM_k}{r\int_XdV_2} \Id,
\end{equation}
where $N_k$, $M_k$ are defined in \eqref{eq:NkMk} and the $c_i,D_i$ are as in Lemma \ref{lem:expansion}.
By elementary manipulation this becomes
\begin{eqnarray}
  k^n + b_1 k^{n-1} + b_2 k^{n-2} + O(k^{n-3})&=&p_k \label{eq:elementarymanip1}\\
  k^n + B_1 k^{n-1} + B_2 k^{n-2} + O(k^{n-3})&=&\tilde p_k\Id\label{eq:elementarymanip2}
\end{eqnarray}
where $p_k$, $\tilde p_k$ are polynomials (as in \eqref{eq:polynomials}), and
$$ b_1 = c_1 -f_0^{-1}f_1, \quad b_2 = -f_0^{-1}f_1b_1 + f_0^{-1} (f_0c_2-f_2),$$
with a similar expression for the $B_i$, only $c_i,f_i$ replaced by $D_i,g_i$ respectively.  Hence we are in a position to apply Lemma \ref{lem:fundamentallimittriple}. Taking $\chi_1$, $\chi_2 \in \RR$ yields
$$
\chi_1\tr B_1 - \chi_2 b_1 = \frac{\chi_1r-\chi_2}{2}S_k + \frac{\beta_1(\chi_1\beta_0 + \chi_2\gamma_0)}{\gamma_0\beta_0}i\tr \Lambda_k F_k,
$$
where we have used the identity $\gamma_0-r\gamma_1=\beta_1$. Taking $\chi_1 = 1$, $\chi_2 = r$, the coupling condition \eqref{eq:abstractcouplecondition} depends on the vanishing of $\beta_1(\beta_0+r\gamma_0)$, resulting in two cases:  \medskip

{\bf Case 1: } $\beta_1(\beta_0+r\gamma_0)\neq 0$. Then we can apply the first part of Lemma~\ref{lem:fundamentallimittriple}, we see that the limit metrics $(h,H)$ satisfy the equations $b_1=const$ and $B_1=const\cdot Id$ which become
\begin{eqnarray*}
\beta_0 \frac{S_{\omega}}{2} - \beta_1 \tr \Lambda iF_{H} &=& const\\
\gamma_0 \left(i\Lambda F_H + \frac{S_{\omega}}{2} \Id \right) - \gamma_1 \tr(i\Lambda F_H) \Id &=& const\cdot \Id
\end{eqnarray*}
Taking the trace of the second equation, and using the condition $\beta_1(\beta_0+r\gamma_0)\neq 0$ it is easy to verify this system is equivalent to
  \begin{equation}
i \Lambda F_H = \lambda \Id, \qquad S_\omega = \hat{S}\label{eq:uncoupled}
\end{equation}
as required. \medskip

{\bf Case  2:} $\beta_1(\beta_0+r\gamma_0) =0$.  Then we can apply the second part of Lemma~\ref{lem:fundamentallimittriple} to deduce the limits satisfy the equations precisely as in the statement of the theorem.

\end{proof}

\section{Infinite dimensional picture}\label{sec:infinite}
In the spirit of~\cite{D1} and~\cite{W4}, in this section we give a different interpretation of the balanced condition~\eqref{eq:balancedcondition} defined in \secref{subsec:balancedemb}, in terms of a \emph{two-step} symplectic quotient on an infinite dimensional manifold. The main novelty of our interpretation is that the two symplectic quotients are not performed with respect to the same symplectic form, and so they are not a \emph{double quotient} in the sense of~\cite{D1}. For the first quotient, we consider a weighted sum of the symplectic structures considered in~\cite{D1} and~\cite{W4} and a \emph{extended gauge group} as the group of symmetries, as defined in~\cite{GF}. The second quotient is then taken with respect to a finite dimensional special unitary group and a small perturbation of the reduced symplectic form.
\subsection{Hamiltonian action of the extended gauge group}\label{subsec:preinfinite}
Let $(E,H)$ be a smooth hermitian vector bundle over a symplectic manifold $(X,\omega)$ endowed with a prequantization $(L,h,\nabla^L)$, with $\omega = i F_{\nabla^L}$. In this section we calculate a moment map for the action of a \emph{extended gauge group}, canonically attached to the previous data, on the space of sections of $E\otimes L^{\otimes k}$. We will find a convenient expression for the moment map evaluated on a holomorphic section that will be used in our two-step reduction in~\secref{subsec:twostepred}. For this, we will generalise previous moment map calculations in~\cite{D1} and~\cite{W4}.

First recall some general facts about extended gauge groups, following~\cite{GF}. Let $\cH$ be the group of Hamiltonian symplectomorphisms of $(X,\omega)$. Then, the extended gauge group $\cX$ of $(E,H)$ over $(X,\omega)$ is the group of hermitian automorphisms of the bundle $E$ which project onto $\cH$. It defines a non trivial extension
$$
1 \to \cG \to \cX \to \cH \to 1,
$$
of $\cH$ by the gauge group $\cG$ of $(E,H)$. To deal with the $L^{\otimes k}$ twist of the bundle we consider the fibre product
\begin{equation}\label{eq:cXL}
\cXL \defeq \cX \times_{\cH} \cHL,
\end{equation}
where $\cHL$ denotes the group of hermitian automorphisms of $L$ which preserve the connection. The infinite dimensional Lie group~\eqref{eq:cXL} is the group of symmetries of our moment map construction.
To describe the Lie algebra of~\eqref{eq:cXL}, note that $\cXL$ fits in the short exact sequence
$$
1 \to \cG \times \RR \to \cXL \to \cH \to 1.
$$
Hence, using a unitary connection $A$ on $E$, any $\zeta\in\LieXL$ can be uniquely written as
\begin{equation}\label{eq:zeta1}
\zeta = (\theta_A \zeta + \theta_A^\perp \check \zeta,if + \theta_{\nabla^{L}}^\perp \check \zeta),
\end{equation}
where $\check \zeta \in \LieH$ denotes the Hamiltonian vector field of $f \in C^\infty(X)$ covered by $\zeta$ and $\theta_A \zeta$ is the skew adjoint endomorphism of $E$ given by the vertical part of $\zeta$. The symbol $\theta^\perp$ denotes horizontal lift of vector fields. Let $k \in \ZZ$ and consider the unitary connection on $E\otimes L^{\otimes k}$ given by
\begin{equation}\label{eq:nabla}
\nabla = \nabla_A \otimes \Id + \Id \otimes \nabla^{L^{\otimes k}}.
\end{equation}
Then, the group $\cXL$ acts naturally on $E\otimes L^{\otimes k}$ and the vector field induced by any $\zeta\in\LieXL$, that we denote by the same symbol, can be decomposed as
\begin{equation}\label{eq:zeta0}
\zeta = \theta_\nabla \zeta + \theta_\nabla^\perp \check \zeta
\end{equation}
with respect to $\nabla$, where $\theta_\nabla \zeta$ admits the following explicit description
\begin{equation}\label{eq:zeta2}
\theta_\nabla \zeta = \theta_A \zeta \otimes 1 + \Id_E \otimes ikf.
\end{equation}
We will now calculate a moment map for the natural left action of $\cXL$ on the space of smooth sections $\Gamma(E\otimes L^{\otimes k})$ of $E\otimes L^{\otimes k}$. For this, consider the $1$-form $\sigma$ on $\Gamma(E\otimes L^{\otimes k})$ given by
$$
\sigma(\dot s) = -\frac{1}{2}\operatorname{Re}\int_X (i \dot s,s)\omega^{[n]}
$$
where $(\dot s_1,s)$ denotes the hermitian product on the fibres of $E \otimes L^{\otimes k}$ and $\omega^{[n]}=\omega^n/n!$.   Define the exact $2$-form
$$
\Omega = d\sigma.
$$
\begin{lemma}
\label{lem:Omega}
The $1$-form $\sigma$ is invariant under the action of $\cXL$ and
\begin{equation}\label{eq:Omegainfinite}
\Omega(\dot s_1,\dot s_2) = \operatorname{Re}\int_X (i \dot s_1,\dot s_2)\omega^{[n]}.
\end{equation}
\end{lemma}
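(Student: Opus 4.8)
The plan is to treat $\Gamma(E\otimes L^{\otimes k})$ as a vector space, identifying each tangent space with the space itself so that any $\dot s$ becomes the associated constant vector field, and then to establish the two assertions separately: the invariance of $\sigma$ is a direct unravelling of the definitions, while the formula for $\Omega$ is a one-line application of the Cartan formula $d\sigma(V_1,V_2)=V_1\sigma(V_2)-V_2\sigma(V_1)-\sigma([V_1,V_2])$ together with the $\RR$-linearity of the coefficients of $\sigma$.

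For the invariance I would first record that an element $g\in\cXL$ covering $\varphi\in\cH$ induces on $E\otimes L^{\otimes k}$ a complex-linear, fibrewise unitary automorphism (again denoted $g$), and acts on sections by $s\mapsto g\circ s\circ\varphi^{-1}$; since this is linear in $s$, its differential at every point is given by the same formula on $\dot s$. Unitarity of $g$ on the fibres, together with the fact that $g$ commutes with multiplication by $i$, gives the pointwise identity $\big(i\,(g\cdot\dot s),\,g\cdot s\big)=\big(i\dot s,\,s\big)\circ\varphi^{-1}$. Substituting $x=\varphi(y)$ in the integral and using that $\varphi$, being a Hamiltonian symplectomorphism of $(X,\omega)$, satisfies $\varphi^{*}\omega^{[n]}=\omega^{[n]}$, yields $(g^{*}\sigma)_{s}(\dot s)=-\tfrac12\operatorname{Re}\int_X\big(i\dot s,s\big)\omega^{[n]}=\sigma_{s}(\dot s)$, i.e. $g^{*}\sigma=\sigma$. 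I would note in passing that specialising this argument to the gauge subgroup $\cG\times\RR$ of $\cXL$, and to $\cH$, recovers the invariance statements of \cite{D1} and \cite{W4}.

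For $\Omega=d\sigma$, fix $\dot s_1,\dot s_2$ and extend them to constant vector fields, so that $[\dot s_1,\dot s_2]=0$ and the Cartan formula reduces to $\Omega(\dot s_1,\dot s_2)=\dot s_1\big(\sigma(\dot s_2)\big)-\dot s_2\big(\sigma(\dot s_1)\big)$. The function $s\mapsto\sigma_{s}(\dot s_j)=-\tfrac12\operatorname{Re}\int_{X}(i\dot s_j,s)\,\omega^{[n]}$ is $\RR$-linear in $s$, so its directional derivative along $\dot s_{3-j}$ is obtained simply by substituting $\dot s_{3-j}$ for $s$; hence $\Omega(\dot s_1,\dot s_2)=-\tfrac12\operatorname{Re}\int_{X}\big[(i\dot s_2,\dot s_1)-(i\dot s_1,\dot s_2)\big]\omega^{[n]}$. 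Finally, conjugate-symmetry of the fibre pairing gives $\operatorname{Re}(i\dot s_2,\dot s_1)=-\operatorname{Re}(i\dot s_1,\dot s_2)$ pointwise, so the bracket contributes $-2\operatorname{Re}(i\dot s_1,\dot s_2)$ to the real part of the integrand, and the prefactor $-\tfrac12$ turns this into $+1$, giving exactly \eqref{eq:Omegainfinite}.

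I do not expect a genuine obstacle: this is a bookkeeping lemma preparing the two-step reduction of the following subsection. The only points requiring care are the sign and slot conventions — in which argument the hermitian pairing $(\cdot,\cdot)$ is complex-linear, the orientation in $d\sigma(V_1,V_2)=V_1\sigma(V_2)-V_2\sigma(V_1)-\sigma([V_1,V_2])$, and the bookkeeping of $\operatorname{Re}$ versus $\operatorname{Im}$ — which must all be pinned down so that the factor $-\tfrac12$ in the definition of $\sigma$ produces the factor $+1$ in $\Omega$, and, for the invariance, keeping straight the direction of the change of variables relating $\varphi$ and $\varphi^{-1}$.
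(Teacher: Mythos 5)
Your proof is correct and follows essentially the same route as the paper: the invariance is the direct unitarity/equivariance/change-of-variables argument the paper dismisses as immediate from the definition of $\cXL$, and the formula for $\Omega$ is obtained exactly as in the paper by taking constant vector fields, applying the Cartan formula with $[\dot s_1,\dot s_2]=0$, using linearity of $\sigma_s(\dot s_j)$ in $s$, and the identity $\operatorname{Re}(i\dot s_2,\dot s_1)=-\operatorname{Re}(i\dot s_1,\dot s_2)$.
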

\begin{proof}
The invariance follows trivially from the definition of $\cXL$. To compute~\eqref{eq:Omegainfinite} we can consider constant vector fields $\dot s_j \in \Gamma(E\otimes L^{\otimes k})$, with $j = 1,2$. Hence, $[\dot s_1,\dot s_2] = 0$ and
\begin{align*}
\Omega(\dot s_1,\dot s_2) & = \dot s_1(\sigma(\dot s_2)) - \dot s_1(\sigma(\dot s_2))\\
& = -\frac{1}{2}\operatorname{Re}\int_X (i \dot s_2,\dot s_1)\omega^{[n]} + \frac{1}{2}\operatorname{Re}\int_X (i \dot s_1,\dot s_2)\omega^{[n]}\\
& = \operatorname{Re}\int_X (i \dot s_1,\dot s_2)\omega^{[n]}.
\end{align*}
\end{proof}
Formula~\eqref{eq:Omegainfinite} shows that $\Omega$ defines symplectic form which has been previously considered in~\cite{D1,W4}. As an elementary consequence we obtain the existence of the desired moment map.
\begin{proposition}
\label{prop:mmapsections}
The $\cXL$-action on $(\Gamma(E\otimes L^{\otimes k}),\Omega)$ is Hamiltonian, with moment map
$$
\langle\mu,\zeta\rangle = - Y_{\zeta} \lrcorner \sigma
$$
where $Y_\zeta$ denotes the infinitesimal action of $\zeta$.
\end{proposition}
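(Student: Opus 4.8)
The plan is the standard construction of a moment map from an exact invariant symplectic form, for which Lemma~\ref{lem:Omega} supplies exactly the two ingredients needed: the form is exact, $\Omega = d\sigma$, and its primitive $\sigma$ is invariant under the $\cXL$-action. So for each $\zeta \in \LieXL$ I would set $\langle \mu, \zeta\rangle \defeq -\, Y_\zeta \lrcorner \sigma$, a smooth function on $\Gamma(E\otimes L^{\otimes k})$, and verify the defining properties of a moment map for the pair $(\Gamma(E\otimes L^{\otimes k}),\Omega)$.

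For the infinitesimal condition I would invoke Cartan's formula $\mathcal{L}_{Y_\zeta}\sigma = d(Y_\zeta \lrcorner \sigma) + Y_\zeta \lrcorner d\sigma$. Since $\sigma$ is $\cXL$-invariant by Lemma~\ref{lem:Omega}, the left-hand side vanishes, hence $d(Y_\zeta \lrcorner \sigma) = -\, Y_\zeta \lrcorner \Omega$, that is $d\langle \mu, \zeta\rangle = Y_\zeta \lrcorner \Omega$, which is the moment map equation in the sense of~\eqref{eq:mmapcondition}. Equivariance then follows again from the invariance of $\sigma$ together with the naturality of the infinitesimal action: for $g \in \cXL$ one has $g_* Y_\zeta = Y_{\Ad_g \zeta}$ as vector fields on $\Gamma(E\otimes L^{\otimes k})$, so $g^*(Y_\zeta \lrcorner \sigma) = Y_{\Ad_{g^{-1}}\zeta}\lrcorner\sigma$ and therefore $\langle \mu(g\cdot s),\zeta\rangle = \langle \mu(s),\Ad_{g^{-1}}\zeta\rangle$; equivalently one may record this infinitesimally as the identity $\mathcal{L}_{Y_\zeta}(Y_\eta\lrcorner\sigma) = Y_{[\zeta,\eta]}\lrcorner\sigma$. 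This proves the Proposition as stated.

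Since this argument is purely formal, the genuine work — and where I would expect to spend effort — lies not in the existence statement but in the "convenient expression" for $\langle\mu,\zeta\rangle$ promised in the text, which one needs when evaluating on a holomorphic section. For that I would make the infinitesimal action explicit using the decomposition~\eqref{eq:zeta0}--\eqref{eq:zeta2}, namely $Y_\zeta(s) = \theta_\nabla\zeta \cdot s - \nabla_{\theta_\nabla^\perp\check\zeta}\, s$, substitute into $\sigma(\dot s) = -\tfrac12\operatorname{Re}\int_X(i\dot s, s)\,\omega^{[n]}$, and then use the holomorphicity of $s$ together with integration by parts (as in \cite{D1,W4}) to trade the $\nabla$-term for a curvature term, obtaining a formula for $\langle\mu,\zeta\rangle$ in terms of $\theta_\nabla\zeta$ and the contraction of $F_\nabla$ with $\check\zeta$. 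The only point requiring care throughout, as always in this infinite-dimensional setting, is to check that $Y_\zeta$ is a genuine (densely defined, smooth) vector field on $\Gamma(E\otimes L^{\otimes k})$ and that $\mu(s)$ pairs continuously and linearly against $\LieXL$, so that it indeed defines an element of $\LieXL^*$; this is routine and handled exactly as in the cited references.
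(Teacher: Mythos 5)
Your argument is correct and matches the paper's treatment: the paper states this Proposition as an elementary consequence of Lemma~\ref{lem:Omega}, with exactly the Cartan-formula-plus-invariance reasoning you give, and the ``convenient expression'' you sketch at the end is precisely the computation the paper carries out afterwards in \eqref{eq:muexplicit}--\eqref{eq:muexplicithol}. No gaps.
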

To compute an explicit expression for $\mu$, we choose a unitary connection $A$ on $(E,H)$ and consider the unitary connection $\nabla$ on $E\otimes L^{\otimes k}$ given by~\eqref{eq:nabla}. Then, using~\eqref{eq:zeta0}, a straightforward computation shows that the infinitesimal action of $\zeta \in \LieXL$ on $s \in \Gamma(E\otimes L^{\otimes k})$ is given by
$$
Y_{\zeta|s} = - \check{\zeta} \lrcorner \nabla s + \theta_\nabla \zeta \cdot s,
$$
which combined with~\eqref{eq:zeta2} and the equalities
$$
\check{\zeta} \lrcorner \nabla s \; \omega^{[n]} = - df \wedge \nabla s \wedge \omega^{[n-1]},
$$
$$
d(\nabla s,s) = d(d(|s|^2) - (s,\nabla s)) = - d(s,\nabla s) = - \overline{d(\nabla s,s)},
$$
leads to the desired expression
\begin{equation}\label{eq:muexplicit}
\begin{split}
\mu & = \frac{1}{2}\operatorname{Re}\int_X (-i\check \zeta \lrcorner \nabla s + i\theta_\nabla \zeta \cdot s,s)\omega^{[n]}\\
& = \frac{i}{2}\int_X (\theta_\nabla \zeta \cdot s,s)\omega^{[n]} - \frac{i}{2}\int_X f d(\nabla s,s)\wedge\omega^{[n-1]}\\
& = \frac{i}{2}\int_X \tr \theta_A\zeta \otimes 1 \cdot (ss^*) \omega^{[n]} + \frac{i}{2}\int_X f\(ik|s|^2\omega^{[n]} - d(\nabla s,s)\wedge\omega^{[n-1]}\).
\end{split}
\end{equation}
Suppose now that $(X,\omega)$ admits a compatible complex structure such that
$$
F_A^{0,2}= 0,
$$
so $E$ inherits a structure of holomorphic vector bundle over $X$. The moment map~\eqref{eq:muexplicit} admits a more convenient expression when $s \in \Gamma(E\otimes L^{\otimes k})$ is a holomorphic section with respect to the induced holomorphic structure. To see this, note that (cf.~\cite[Lemma~9]{D1})
$$
d(\nabla s,s) = \nabla s \wedge \nabla \overline{s} + (F_\nabla s,s),
$$
where $\overline{s}$ is the section of the dual bundle $E^*\otimes L^{-k}$ defined using the standard anti-linear isomorphism between $E\otimes L^{\otimes k}$ and $E^*\otimes L^{-k}$, and
$$
\Delta |s|^2 = 2 i \Lambda \dbar \partial |s|^2 = 2i(\Lambda(\nabla s \wedge \nabla \overline{s}) + (\Lambda F_\nabla s,s)),
$$
so \eqref{eq:muexplicit} can be rewritten as
\begin{equation}\label{eq:muexplicithol}
\begin{split}
\mu & = \frac{i}{2}\int_X \tr \theta_A\zeta \otimes 1 \cdot (ss^*) \omega^{[n]} - \frac{1}{4}\int_X f\(\Delta(|s|^2) + 2k|s|^2\)\omega^{[n]}.
\end{split}
\end{equation}
\subsection{Two-step reduction}\label{subsec:twostepred}
Let $\cJ$ denote the space of complex structures on $X$ compatible with $\omega$. Let $\cA$ be the space of unitary connections on $(E,H)$ and $\cP \subset \cJ \times \cA$ the subspace cut out by the compatibility condition
$$
F_A^{0,2_J} = 0.
$$
Note that any point in $\cP$ defines a structure of polarised manifold on $(X,L)$ and a structure of holomorphic vector bundle on $E$ over $X$. Given $k,N,M$ positive integers, consider the space of holomorphic tuples
\begin{equation}\label{eq:T}
\cT \subset \cP \times \Gamma(L^{\otimes k})^N \times \Gamma(E \otimes L^{\otimes k})^M,
\end{equation}
given by those $(J,A,t,s)$ such that
$$
t = (t_1, \ldots, t_N) \qquad \textrm{and} \qquad s = (s_1, \ldots, s_M)
$$
are basis of the spaces of holomorphic sections on $L^{\otimes k}$ and $E \otimes L^{\otimes k}$, with respect to the Dolbeault operators induced by $J$, $\nabla^L$ and $A$.
To do the first reduction we endow $\cT$ with a K\"ahler structure. For this, recall first from~\cite[\S 2]{GF} that $\cP$ has a natural complex structure (that we will not use explicitly). The complex structure on the last two factors in~\eqref{eq:T} is given simply by multiplication by $i$ and the complex structure on $\cT$ is then induced by the product structure. Following~\cite{D1}, we consider the holomorphic projection
\begin{equation}\label{eq:Tembedded}
\cT \to \Gamma(L^{\otimes k})^N \times \Gamma(E \otimes L^{\otimes k})^M
\end{equation}
and pull-back the product K\"ahler structure on the target space induced on each factor by~\eqref{eq:Omegainfinite}, weighting the second factor by a positive real constant $\epsilon > 0$. Arguing as in~\cite{D1}, for sufficiently large $k$ one can show that~\eqref{eq:T} is a holomorphic embedding (away from singularities) and that the pull-back $2$-form is indeed a K\"ahler structure, explicitly given by
\begin{equation}\label{eq:OmegaT}
\Omega_\cT(\dot \tau_1,\dot \tau_2) = \sum_j\operatorname{Re}\int_X (i \dot t_{1,j},\dot t_{2,j})\omega^{[n]} + \epsilon \sum_j \operatorname{Re}\int_X (i \dot s_{1,j},\dot s_{2,j})\omega^{[n]}
\end{equation}
where $\dot \tau_l = (\dot J_l,\dot A_l,\dot t_l,\dot s_l)$ with $l = 1,2$.
The left action of $\cXL$ on each of the factors in~\eqref{eq:T} induces a well-defined action on $\cT$ which preserves the K\"ahler structure and, as a direct consequence of the moment map computation in~\secref{subsec:preinfinite}, it follows that the $\cXL$-action is Hamiltonian. In order to apply this fact, we need to twist~\eqref{eq:muexplicithol} by an element in the center of the Lie algebra of gauge group $\LieG$, given by $i\nu\Id$ for $\nu \in \RR$.
\begin{lemma}\label{lem:mmap}
Given real constants $c, \nu \in \RR$, the following expression defines a moment map for the $\cXL$-action on $\cT$
\begin{equation}\label{eq:mulambda}
\begin{split}
\mu_{\nu} & = \frac{i\epsilon}{2}\int_X\tr \theta_A \zeta \otimes 1 \cdot \(\sum_js_js_j^* - \nu \Id\)\omega^{[n]}\\
& - \frac{1}{4}\int_X f \(\Delta\(\sum_j|t_j|^2+\epsilon\sum_j|s_j|^2\) + 2k\sum_j|t_j|^2 + 2k\epsilon\sum_j|s_j|^2 - c\)\omega^{[n]}\\
& + \frac{i\nu\epsilon}{2}\int_X f\tr\Lambda F_A\omega^{[n]}.
\end{split}
\end{equation}
\end{lemma}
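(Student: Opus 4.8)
The plan is to obtain $\mu_\nu$ from the moment maps already computed, in three stages: assemble a moment map on the ambient product of section spaces, restrict it to $\cT$, and then account for the constant $c$ and the twist by the central element $i\nu\Id\in\LieG$. For the first stage I would use that the moment map for a diagonal Hamiltonian action on a product of symplectic manifolds is the sum of the moment maps on the factors; applied to $\Gamma(L^{\otimes k})^N\times\Gamma(E\otimes L^{\otimes k})^M$ with the product symplectic form built from \eqref{eq:Omegainfinite} (the second factor weighted by $\epsilon$), together with Proposition~\ref{prop:mmapsections} and the holomorphic formula \eqref{eq:muexplicithol} applied to each $s_j$ — and its specialisation to the line bundle $L^{\otimes k}$, where the endomorphism term is absent, applied to each $t_j$ — this produces a moment map for the $\cXL$-action on the ambient product equal to the first two terms of \eqref{eq:mulambda} with $\nu=c=0$. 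For the second stage, the holomorphic projection \eqref{eq:Tembedded} realises $\cT$ as a $\cXL$-equivariant submanifold on which $\Omega_\cT$ is the pull-back of the ambient product form (symplectic for $k\gg0$ by the discussion after \eqref{eq:OmegaT}); since $\iota_*Y_\zeta$ agrees with the ambient infinitesimal action along $\cT$, the pull-back of the ambient moment map satisfies \eqref{eq:mmapcondition} and is equivariant, hence is itself a moment map for the $\cXL$-action on $(\cT,\Omega_\cT)$.

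It remains to check that passing to $\mu_\nu$ adds only a coadjoint-fixed, locally constant element of $\LieXL^*$, which cannot affect the moment map property. The $c$-contribution is the genuine constant $\frac{c}{4}\int_X f\,\omega^{[n]}$, coadjoint invariant because $\cH$ preserves $\omega^{[n]}$, so adding it is the familiar shift ambiguity of moment maps. The $\nu$-contribution is the delicate point: replacing $\sum_j s_js_j^*$ by $\sum_j s_js_j^*-\nu\Id$ in the first term inserts $-\frac{i\epsilon\nu}{2}\int_X\tr(\theta_A\zeta)\,\omega^{[n]}$, which is \emph{not} constant over $\cT$ because $\theta_A\zeta$ depends on the connection $A$ through the decomposition \eqref{eq:zeta1} — and this is precisely what the term $\frac{i\nu\epsilon}{2}\int_X f\tr\Lambda F_A\,\omega^{[n]}$ is there to cancel. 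Varying $A$ by $\dot A$ one has $\delta(\theta_A\zeta)=\check\zeta\lrcorner\dot A$ and $\delta F_A=d_A\dot A$ with $\tr d_A\dot A=d\tr\dot A$; an integration by parts using $\check\zeta\lrcorner\omega=df$ then shows the $\cA$-variations of the two terms cancel, so their sum is closed on $\cP\times\Gamma(L^{\otimes k})^N\times\Gamma(E\otimes L^{\otimes k})^M$ restricted to $\cT$, and its equivariance follows since $\tr$ is conjugation invariant and $f$, $\tr\Lambda F_A$ transform in the expected way under $\cXL$.

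The step I expect to be the main obstacle is this last cancellation. Because $\Omega_\cT$ in \eqref{eq:OmegaT} pairs only the section components of tangent vectors, any residual $A$-dependence carried into the moment map by the central twist would violate \eqref{eq:mmapcondition} along the connection directions unless it is killed by a compensating curvature term; identifying that term and verifying the cancellation through the integration-by-parts identity is where the genuine content lies — the product and restriction stages being formal once Proposition~\ref{prop:mmapsections} and \eqref{eq:muexplicithol} are in hand.
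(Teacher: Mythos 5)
Your proposal is correct and follows essentially the paper's own argument: the $\nu=c=0$ part is obtained from Proposition~\ref{prop:mmapsections} and \eqref{eq:muexplicithol} (restricted equivariantly to $\cT$), the $c$-term is the character shift $\zeta\mapsto\int_X f\,\omega^{[n]}$, and the key step is exactly the one you single out — showing that the two $\nu$-dependent terms have cancelling $\cA$-variations via $\delta(\theta_A\zeta)=\check\zeta\lrcorner\dot A$, $\tr d_A\dot A=d\tr\dot A$ and Stokes, which is the paper's proof that $\mu'=\frac{i\nu\epsilon}{2}\int_X\tr(\theta_\nabla\zeta-f\Lambda F_\nabla)\omega^{[n]}$ is constant on $\cA$. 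The only (cosmetic) difference is that the paper packages the $\nu$-terms using the twisted connection $\nabla$ on $E\otimes L^{\otimes k}$ and then unwinds via \eqref{eq:Fnabla}, absorbing the resulting multiple of $\int_X f\,\omega^{[n]}$ into the character.
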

\begin{proof}
Note first that $\mu_0$ is a moment map for the $\cXL$-action by~\eqref{eq:muexplicithol}, since the map $\zeta \to \int_X f\omega^{[n]}$ defines a character of $\LieXL$ constant over $\cT$. We prove now that
$$
\mu' := \frac{i\nu\epsilon}{2}\int_X \tr (\theta_\nabla\zeta - f \Lambda F_\nabla)\omega^{[n]}
$$
is constant on $\cA$, where $\nabla$ is given by~\eqref{eq:nabla}. Using~\eqref{eq:zeta2} and
\begin{equation}\label{eq:Fnabla}
\begin{split}
F_\nabla = F_A \otimes 1 - ik \Id_E \otimes \omega
\end{split}
\end{equation}
we calculate
\begin{align*}
\frac{d}{dt}_{|t=0}\mu'(A + t\dot A) &= \frac{i\nu\epsilon}{2}\int_X \tr (\check \zeta \lrcorner \dot A \otimes 1 - f \Lambda d_A \dot A \otimes 1)\omega^{[n]}& \\
&= \frac{i\nu\epsilon}{2}\int_X -df \wedge \tr\dot A \wedge\omega^{[n-1]} - f d \tr\dot A \wedge\omega^{[n-1]} & = 0
\end{align*}
for any $A \in \cA$ and $\dot A$ in the tangent space $T_A \cA$, identified with the space of $\End(E,H)$-valued $1$-forms on $X$, where we have used the equalities
$$
\frac{d}{dt}_{|t=0}\dot \theta_{\nabla_t} = \dot A \otimes 1, \qquad  \check{\zeta} \lrcorner \dot A \; \omega^{[n]} = - df \wedge \dot A \wedge \omega^{[n-1]}.
$$
Finally, the statement follows rewriting $\mu'$ using \eqref{eq:Fnabla} and the equality
$$
\mu_{\nu} = \mu_0 - \mu' -\(\nu\epsilon kr(n+1)/2 - c/4\)\int_X f\omega^{[n]}.
$$
\end{proof}

We claim that the zero locus of $\mu_\nu$ is given by the solutions of the system
\begin{equation}\label{eq:munuzeros}
\begin{split}
\sum_js_js_j^* & = \nu \Id\\
\Delta\(\sum_j|t_j|^2\) + 2k\sum_j|t_j|^2 & = \epsilon 2i\nu\tr\Lambda F_A + c'
\end{split}
\end{equation}
for a constant $c' \in \RR$ (compare with Proposition~\ref{prop:balancedmetrics}). To see this, we first evaluate~\eqref{eq:mulambda} on vertical vector fields, obtaining the first equation in~\eqref{eq:munuzeros}. Now, taking trace of this equation and evaluating~\eqref{eq:mulambda} on horizontal vector fields with respect to $A$ we obtain the second equation in~\eqref{eq:munuzeros}. To discuss the existence of solutions of \eqref{eq:munuzeros}, we introduce the notion of complexified orbit. Recall from \cite[\S 3]{AGG} that in $\cP$ there is a well defined notion of complexified orbit for the extended gauge group $\cX$ such that its points correspond to pairs of Hermitian metrics on $L$ and $E$, up to the action of $\cX$. Similarly, one can define an appropriate notion of complexified orbit for the $\cXL$-action on $\cT$ by defining and equivalence relation: $\tau \sim \tau'$ if there exists automorphisms of the complex vector bundles $E$ and $L^k$ (not necessarily preserving $h^k$, $H$ and $\nabla^{L^k}$) which project to the same diffeomorphisms on $X$ and such that take $\tau$ to $\tau'$ (cf. \cite[\S 2.1]{D1}). Again, up to the action of $\cXL$, it can be checked that points in the complexified orbit correspond to pairs of hermitian metrics on $E$ and $L$.

\begin{lemma}\label{lem:existencezeros}
Given a complexified orbit in $\cT$, it contains a solution of \eqref{eq:munuzeros} for sufficiently small $\epsilon$.
\end{lemma}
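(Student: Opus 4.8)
The plan is to view \eqref{eq:munuzeros} as a one‑parameter family of equations that, at $\epsilon=0$, degenerates to a decoupled pair whose solutions are already understood, and then to perturb. The first thing to notice is that at $\epsilon=0$ the two equations in \eqref{eq:munuzeros} separate. Since the operator $\Delta+2k$ on $C^\infty(X)$ is positive, hence invertible, the second equation at $\epsilon=0$ forces $\sum_j|t_j|^2\equiv c'/2k$; together with the $L^2$‑orthonormality of $\{t_j\}$ (with respect to $h^{\otimes k}$ and $\omega^{[n]}$) this is precisely the statement that the Bergman function of $(X,L^{\otimes k})$ is constant, i.e.\ the balanced condition of Luo and Zhang \cite{Luo,Zh}, which depends only on $h$. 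The first equation is then the balanced condition of Wang \cite{W1,W2} for a Hermitian metric on $E\otimes L^{\otimes k}$ relative to the $L^2$‑structure determined by $h$. Thus at $\epsilon=0$ the system is solved by first producing the balanced $h_0$ on $L^{\otimes k}$ and then the $E$‑balanced $H_0$ relative to it; the one hypothesis this step needs is that the given complexified orbit carries the stability these classical results require (equivalently, that it is non‑degenerate for the two $\epsilon=0$ reductions), and under it the orbit contains, for the large $k$ under consideration, a solution $\tau_0$ of the $\epsilon=0$ system, represented along the orbit by a pair of Hermitian metrics $(h_0,H_0)$ on $(L,E)$.

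Next I would set up, on the complexified orbit — which by the discussion preceding the statement is parametrised by pairs of Hermitian metrics $(h,H)$, after passing to suitable Sobolev completions — the smooth map
\[
\cF_\epsilon(h,H)=\Bigl(\textstyle\sum_j s_js_j^*-\nu\Id,\ \Delta\bigl(\sum_j|t_j|^2\bigr)+2k\sum_j|t_j|^2-2i\epsilon\nu\tr\Lambda F_A-c'\Bigr),
\]
where $\{t_j\}$, $\{s_j\}$ are the $L^2$‑orthonormal bases determined by $(h,H)$ and $\omega^{[n]}$, and $A$ is the Chern connection of $H$, working modulo automorphisms (finite‑dimensional by stability: one fixes a local slice, the normalising constants $\nu$, $c'$ absorbing the residual scalings). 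Then $\cF_\epsilon$ depends smoothly on $\epsilon$ and $\cF_0(h_0,H_0)=0$. The crucial structural point is that the derivative of the second component in the $H$‑direction vanishes at $\epsilon=0$, while the derivative of the first component in the $H$‑direction is the linearised Wang operator for $E\otimes L^{\otimes k}$, an isomorphism on the slice, and the derivative of the second component in the $h$‑direction is $\Delta+2k$ plus the linearised balanced operator of $(X,L^{\otimes k})$, again an isomorphism by \cite{Zh,D1}. This triangular structure makes $D\cF_0$ invertible irrespective of the remaining (the $h$‑derivative of the first component) block, so the implicit function theorem yields a solution $(h_\epsilon,H_\epsilon)$ of $\cF_\epsilon=0$ for all small $\epsilon$; since deforming $(h,H)$ is exactly moving inside the complexified orbit, this is the required zero of $\mu_\nu$.

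I expect the main obstacle to be the non‑degeneracy input: one must know that the decoupled balanced metrics $h_0$ on $(X,L^{\otimes k})$ and $H_0$ on $E\otimes L^{\otimes k}$ are non‑degenerate critical points modulo automorphisms, so that the two diagonal blocks of $D\cF_0$ really are invertible on the slice — this is where the strict stability underlying \cite{Luo,Zh,W1,W2,D1} enters, and where the bookkeeping of $\nu$ and $c'$ against the residual $\CC^\ast$ and overall scaling must be done with care. A more robust alternative that sidesteps a direct linearisation is variational: the zeros of $\mu_\nu$ on a complexified orbit are the critical points of a Kempf--Ness type functional $\Psi_\epsilon$, which at $\epsilon=0$ splits as the sum of the (proper, by the same stability) Kempf--Ness functionals for $(X,L^{\otimes k})$ and for $E\otimes L^{\otimes k}$; since $\Psi_\epsilon-\Psi_0$ is $\epsilon$ times a term dominated by the relative energy, properness of $\Psi_0$ persists for small $\epsilon$, so $\Psi_\epsilon$ attains a minimum, which is the desired solution. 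Either way the essential content is that the strict stability behind the $\epsilon=0$ balanced metrics is an open property under the $O(\epsilon)$ coupling.
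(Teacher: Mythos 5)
Your proposal rests on a misreading of what the system \eqref{eq:munuzeros} actually is, and this is a genuine gap rather than a stylistic difference. In the lemma the bases $t=(t_1,\ldots,t_N)$ and $s=(s_1,\ldots,s_M)$ are part of the fixed data of the point $\tau\in\cT$ (equivalently, of the complexified orbit): they are \emph{not} required to be $L^2$-orthonormal with respect to any inner product, and no $L^2$ structure enters \eqref{eq:munuzeros} at all. The equations are pointwise conditions on these fixed sections, with the unknowns being the pair of fibre metrics $(h,H)$ parametrising the complexified orbit. Consequently the first equation $\sum_j s_js_j^*=\nu\Id$ is \emph{not} Wang's balanced condition and the $\epsilon=0$ scalar equation is \emph{not} the Luo--Zhang balanced condition; the orthonormality that would turn them into balanced conditions is exactly the content of the second moment map $\mu_{SU}$ in \eqref{eq:mmapSU}, i.e.\ of the second reduction, which the lemma does not touch. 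Because of this misidentification you import a stability (or properness/non-degeneracy) hypothesis on the orbit and appeal to the existence theory of \cite{Luo,Zh,W1,W2,D1}; the lemma has no such hypothesis, needs none, and with your reading it would in fact become a conditional statement different from the one asserted.

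The correct argument is much more elementary. For any $h$, the first equation is solved exactly by taking $H\otimes h^{k}$ to be (a multiple of) the Fubini--Study quotient metric induced by the evaluation surjection $\CC^{M}\otimes\cO\to E\otimes L^{\otimes k}$ determined by the fixed basis $s$; this is a pointwise algebraic construction, valid on every complexified orbit. At $\epsilon=0$ the remaining scalar equation reduces to $\sum_j|t_j|^2=c''$, which likewise has a unique explicit solution (rescale any reference metric on $L^{\otimes k}$ by $c''/\sum_j|t_j|^2$). One then perturbs in $\epsilon$: writing $h'=e^{f}h$, the linearisation of the scalar equation at $\epsilon=0$ is $f\mapsto c''\Delta f+2kc''f$, which is invertible since $\Delta+2k$ is a positive operator, so the standard implicit function theorem produces solutions for all sufficiently small $\epsilon$. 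No slice modulo automorphisms, no Kempf--Ness functional, and no stability input is required; your ``triangular'' linearisation with the linearised Wang and Luo--Zhang--Donaldson operators as diagonal blocks is solving a different (and harder) problem than the one the lemma states.
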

\begin{proof}
Fixing $\tau \in \cT$ and considering as unknowns a pair of metrics $(h,H)$, \eqref{eq:munuzeros} is a system in separated variables where the first equation can be easily solved by a change of metric on $E$. Moreover, for $\epsilon = 0$ it reduces  to the system
\begin{equation}\label{eq:munuepsiloncero}
\begin{split}
\sum_js_js_j^* & = \nu \Id\\
\sum_j|t_j|^2 & = c''
\end{split}
\end{equation}
for a constant $c'' \in \RR$, which admits a unique solution (that without lose of generality we assume to be $(h,H)$). Since the linearisation of the scalar equation in \eqref{eq:munuepsiloncero} when $\epsilon = 0$ is given by
$$
c''\Delta f + c''2kf = 0
$$
for a conformal change $h' = e^f h$, the statement follows from an standard implicit function theorem argument.
\end{proof}

For the second reduction, we consider an exact $2$-form on the symplectic quotient
\begin{equation}\label{eq:quotient1}
\cT//\cXL = \mu_\nu^{-1}(0)/\cXL
\end{equation}
which has a finite asymptotic expansion in the parameter $k$ and whose leading order term corresponds to~\eqref{eq:OmegaT}. For this, given parameters $\alpha = (\alpha_0,\ldots,\alpha_{n+1}) \in \RR^{n+2}$ and positive real numbers $q_1,q_2 > 0$ we define a $1$-form on $\cT$ given by
\begin{equation}\label{eq:sigmakl}
\sigma_k(\dot \tau) = - q_1\operatorname{Re}\int_X \frac{\sum_j\(i\dot t_j,t_j\)}{\sum_j|t_j|^2}dV_1(\tau) - q_2\operatorname{Re}\int_X \sum_j\(i\dot s_j,s_j\)dV_2(\tau)
\end{equation}
for $\dot \tau = (\dot J,\dot A,\dot t,\dot s) \in T_\tau \cT$ and $\tau = (J,A,t,s)$,where
\begin{align*}
dV_1(\tau)&= \sum_{p=1}^{n+1} p\alpha_p\omega_1^{p-1}\wedge\omega_2^{n+1-p},\\
dV_2(\tau) &= \sum_{p=0}^n(n+1-p)\alpha_p\omega_1^p\wedge\omega_2^{n-p},
\end{align*}
and
$$
\omega_1 = k\omega + \partial \dbar_J \log\(\sum_j|t_j|^2\), \qquad \omega_2 = kr\omega + i\tr F_A.
$$
Then, it is easy to verify that $\sigma_k$ is $\cXL$-invariant, and so the exact $2$-form
\begin{equation}\label{eq:Omegakl}
\Omega_k = d\sigma_k
\end{equation}
induces a well defined $2$-form on $\cT//\cXL$.

\begin{remark}
Relying on the asymptotic expansion of the density of states $\sum_j|t_j|^2$ in powers of $k$ (see \S\ref{sec:Bergman}), for suitable values of $q_1$ and $q_2$ (which may depend on $k$) we have an expansion
$$
\Omega_k = \Omega_\cT + O(k^{-1}).
$$
Hence, one may expect non-degeneracy of~\eqref{eq:Omegakl} for large values of $k$. The previous expansion shall be considered only at a formal level.
\end{remark}
\begin{remark}
The $2$-form $\Omega_k$ is of type $(1,1)$ only up to order $O(k^{-1})$. The failure of the compatibility with the complex structure on $\cT$ can be checked considering mixed derivatives involving the $A-z$ and $A-s$ directions. To illustrate this, suppose $N = 0$, $M=1$ and $q_2 = 1$ and compute (cf. Lemma~\ref{lem:Omega})
\begin{align*}
d\sigma_k(\dot \tau_1,\dot \tau_2) & = \operatorname{Re}\int_X (i \dot s_1,\dot s_2)dV_2\\
& + \operatorname{Re}\int_X (i\dot s_1,s)\partial_A(dV_2)(\dot A_2) - \operatorname{Re} \int_X (i\dot s_2,s)\partial_A(dV_2)(\dot A_1)\\
\end{align*}
where $\dot \tau_j$ are assumed to be of the form $(0,\dot A_j,0,\dot s_j)$. Using that the complex structure on $\cT$ at $(J,A,s)$ acts on $\dot A_j$ as $\dot A \to \dot A(-J \cdot)$ it can be readily checked that $\Omega_k$ is not of type $(1,1)$.
\end{remark}

Consider the product of special unitary groups $SU(N)\times SU(M)$ acting on $\cT$ on the left, via its action on $\Gamma(L^{\otimes k})^N \times \Gamma(E \otimes L^l)^M$. Note that this action commutes with the $\cXL$-action on $\cT$ and preserves the zero locus of $\mu_\nu$, so induces a well-defined action on~\eqref{eq:quotient1}. Moreover, it clearly preserves \eqref{eq:sigmakl} and so it induces a Hamiltonian action on $(\cT//\cXL,\Omega_k)$ (cf. Proposition~\ref{prop:mmapsections}). Using the standard identification of $\mathfrak{su}(N)\times\mathfrak{su}(M)$ with its dual, the following result is straightforward.
\begin{lemma}\label{lem:mmapSU}
The $SU(N)\times SU(M)$ action on $(\cT//\cXL,\Omega_k)$ is Hamiltonian, with moment map $\mu_{SU} = \mu_{SU(N)} \times \mu_{SU(M)}$ given by
\begin{equation}\label{eq:mmapSU}
\begin{split}
\mu_{SU(N)} & = iq_1\(\int_X \frac{\langle t_m,t_l\rangle}{\sum_j|t_j|^2} dV_1 - \frac{\int_XdV_1}{N}\Id\),\\
\mu_{SU(M)} & = iq_2\(\int_X \langle s_m,s_l\rangle dV_2 - \frac{r\nu\int_XdV_2}{M}\Id\).
\end{split}
\end{equation}
\end{lemma}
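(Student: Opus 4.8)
The plan is to read off $\mu_{SU}$ from the primitive $\sigma_k$ of $\Omega_k$, exactly as was done for $\Omega$ in Proposition~\ref{prop:mmapsections} and for $\mu_\nu$ in Lemma~\ref{lem:mmap}, and then to pass to the quotient by the standard reduction-in-stages argument for commuting Hamiltonian actions. First I would record that the linear $SU(N)\times SU(M)$-action on $\cT$ commutes with the $\cXL$-action, preserves $\mu_\nu^{-1}(0)$ by Lemma~\ref{lem:mmap}, and leaves the $1$-form $\sigma_k$ of \eqref{eq:sigmakl} invariant: any $g\in SU(N)\times SU(M)$ is fibrewise unitary, so the quantities $\sum_j(i\dot t_j,t_j)$, $\sum_j|t_j|^2$ (hence $\omega_1$ and $dV_1(\tau)$), and their counterparts for $s$, are all unchanged. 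Consequently the action descends to $(\cT//\cXL,\Omega_k)$, preserves the (possibly degenerate) $2$-form $\Omega_k=d\sigma_k$, and a moment map in the formal sense of \secref{sec:prelim} is given on $\cT$ by $\langle\mu,\zeta\rangle=-Y_\zeta\lrcorner\sigma_k$, up to an additive constant chosen so that it takes values in $\mathfrak{su}(N)^*\times\mathfrak{su}(M)^*$.

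Next I would compute $-Y_\zeta\lrcorner\sigma_k$ for $\zeta=(\zeta_1,\zeta_2)$. The infinitesimal action is $\dot t_j=(\zeta_1 t)_j$ and $\dot s_j=(\zeta_2 s)_j$, so substituting into \eqref{eq:sigmakl} and using that $\zeta_1,\zeta_2$ are skew-Hermitian gives
$$-Y_\zeta\lrcorner\sigma_k = q_1\operatorname{Re}\int_X\frac{\sum_j(i(\zeta_1 t)_j,t_j)}{\sum_j|t_j|^2}dV_1 + q_2\operatorname{Re}\int_X\sum_j(i(\zeta_2 s)_j,s_j)dV_2 = \tr(\zeta_1 A_1)+\tr(\zeta_2 A_2),$$
where $A_1=iq_1\int_X\frac{\langle t_m,t_l\rangle}{\sum_j|t_j|^2}dV_1$ and $A_2=iq_2\int_X\langle s_m,s_l\rangle dV_2$ are the (skew-Hermitian) Gram-type matrices. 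Since $\mathfrak{su}(N)^*$ is identified with the trace-free part of $\mathfrak{u}(N)$, the moment map is the trace-free projection of $(A_1,A_2)$, and subtracting the trace alters the pairing only with elements outside $\mathfrak{su}$, so the result is still a moment map. Here $\tr A_1=iq_1\int_X dV_1$, producing the term $\frac{\int_X dV_1}{N}\Id$, while $\tr A_2=iq_2\int_X\tr\bigl(\sum_j s_js_j^*\bigr)dV_2$; restricting to $\mu_\nu^{-1}(0)$ and using $\sum_j s_js_j^*=\nu\Id_E$ from \eqref{eq:munuzeros} gives $\tr A_2=iq_2 r\nu\int_X dV_2$, producing the term $\frac{r\nu\int_X dV_2}{M}\Id$. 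This yields precisely \eqref{eq:mmapSU}.

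Finally I would check the descent to $\cT//\cXL$: the function $\langle\mu,\zeta\rangle$ is $\cXL$-invariant because $\sigma_k$ is and $Y_\zeta$ commutes with the $\cXL$-action, so by the standard reduction-in-stages principle the induced map on $\mu_\nu^{-1}(0)/\cXL$ is a moment map for the residual $SU(N)\times SU(M)$-action on $(\cT//\cXL,\Omega_k)$. The only point needing genuine care — rather than being the pure bookkeeping of Lemmas~\ref{lem:mup} and~\ref{lem:mmap} — is the appearance of $\nu$ in the second correction term: the trace $\tr A_2$ is not constant on $\cT$, and it is exactly the constraint $\sum_j s_js_j^*=\nu\Id$ cutting out $\mu_\nu^{-1}(0)$ that forces $A_2$ into a fixed affine translate of $\mathfrak{su}(M)^*$, so that the trace-free correction is the constant recorded in \eqref{eq:mmapSU} and not merely a function on the quotient.
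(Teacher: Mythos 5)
Your proposal is correct and follows exactly the route the paper intends: the paper states the lemma as a straightforward consequence of the $SU(N)\times SU(M)$-invariance of $\sigma_k$ and the identification of $\mathfrak{su}$ with its dual, i.e.\ one reads $\langle\mu_{SU},\zeta\rangle=-Y_\zeta\lrcorner\sigma_k$ off the invariant primitive as in Proposition~\ref{prop:mmapsections}, and your trace computation (with $\tr A_1=iq_1\int_X dV_1$ automatically, and $\tr A_2$ pinned to $iq_2 r\nu\int_X dV_2$ by the constraint $\sum_j s_js_j^*=\nu\Id$ on $\mu_\nu^{-1}(0)$) together with the descent argument fills in precisely the details the authors leave to the reader.
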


By formulae \eqref{eq:mmapSU} and \eqref{eq:mulambda}, it follows that if $\tau \in \cT$ lies in the intersection
$$
\mu_\nu^{-1}(0) \cap \mu_{SU}^{-1}(0)
$$
then the pair $(J,A) \in \cP$ defines an $\alpha$-balanced triple $(X,L,E)$ with respect to $k$, with balanced embedding determined by the basis $t$ and $s$. To state a converse, which may be compared with \cite[Proposition 11]{D1}, notice that the action of $SU(N)\times SU(M)$ on $\cT$ extends to a holomorphic action of $SL(N,\CC)\times SL(M,\CC)$ and so there is a natural notion of complexified orbit for the $\cXL \times SU(N)\times SU(M)$-action defined in the obvious way.

\begin{proposition}
A point in $\tau \in \cT_0$ defines an holomorphic structure of $\alpha$-balanced triple $(X,L,E)$ if and only if its complexified orbit for the $\cXL \times SU(N)\times SU(M)$-action contains a point in
$$
\mu_\nu^{-1}(0) \cap \mu_{SU}^{-1}(0),
$$
or equivalently, if and only if the complexified orbit is represented by a point in the two-step quotient
$$
\mu_\nu^{-1}(0) \cap \mu_{SU}^{-1}(0)/\cXL \times SU(N)\times SU(M).
$$
\end{proposition}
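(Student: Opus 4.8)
The plan is to prove the two implications separately, working throughout with $k\gg0$ and using both the reduction of the $\alpha$-balanced condition to the constancy of the two Bergman functions (Proposition~\ref{prop:balancedmetrics}) and the solvability statement of Lemma~\ref{lem:existencezeros}, in the spirit of \cite[Proposition~11]{D1}. For the ``if'' direction, suppose the complexified $\cXL\times SU(N)\times SU(M)$-orbit of $\tau$ contains a point $\tau'=(J',A',t',s')$ lying in $\mu_\nu^{-1}(0)\cap\mu_{SU}^{-1}(0)$. The complexified $\cXL$-action is by complex-linear automorphisms of $E$ and $L^{\otimes k}$ covering diffeomorphisms of $X$, and the $SL(N,\CC)\times SL(M,\CC)$-factor only reparametrises the tuples of sections, so $(J',A')$ and $(J,A)$ carry isomorphic holomorphic triples $(X,L,E)$; in particular one is $\alpha$-balanced with respect to $k$ if and only if the other is. That the triple carried by $\tau'$ is $\alpha$-balanced is exactly the observation recorded just before the Proposition: evaluating $\mu_\nu(\tau')=0$ on vertical and then on horizontal vector fields yields the system~\eqref{eq:munuzeros}, and, since $\mu_\nu=0$ forces the metric on $E\otimes L^{\otimes k}$ to be the Fubini--Study pull-back (so that $\omega_2=kr\omega+i\tr F_{A'}=\phi_2^*\omega_G$, while $\omega_1=\phi_1^*\omega_P$ holds for any metric), the vanishing $\mu_{SU}(\tau')=0$ read from~\eqref{eq:mmapSU} is precisely the balanced system~\eqref{eq:balancedcondition} for the embedding determined by $t',s'$. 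Hence $(J,A)$ defines an $\alpha$-balanced triple.

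For the ``only if'' direction, suppose $(J,A)$ defines an $\alpha$-balanced triple. By Proposition~\ref{prop:balancedmetrics} there are hermitian metrics $h$ on $L$, $H$ on $E$, and bases $\{t_j\}$, $\{s_j\}$, orthonormal for the $L^2$-metrics built from $(h^{\otimes k},dV_1)$ and $(H\otimes h^{\otimes k},dV_2)$, with $\rho_k(h,H)=N_k$ and $B_k(h,H)=\tfrac{M_k}{r}\Id$ constant. Using the standard conventions for the complexified gauge action (as in \cite[\S2.1]{D1}, \cite[\S3]{AGG} and the discussion above), an arbitrary change of hermitian metrics on $L$ and $E$, together with an arbitrary change of bases via $SL(N,\CC)\times SL(M,\CC)$ and the overall scalings supplied by the centres of $\cXL$, is realised within the complexified $\cXL\times SU(N)\times SU(M)$-orbit; fixing the constants $\nu,c$ and the weights $q_1,q_2$ so that $\tfrac{M_k}{r}=\nu$ and the normalisations in~\eqref{eq:mmapSU} match, the tuple $\tau_0$ carrying the data $(h,H,\{t_j\},\{s_j\})$ lies in the complexified orbit of $\tau$, and constancy of $\rho_k,B_k$ together with the orthonormality of the bases is precisely $\mu_{SU}(\tau_0)=0$. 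It then remains to move into $\mu_\nu^{-1}(0)$ without leaving $\mu_{SU}^{-1}(0)$: the first equation of~\eqref{eq:munuzeros}, $\sum_j s_js_j^*=\nu\Id$, already holds, so for the remaining scalar equation I would apply a coupled conformal rescaling $h\mapsto e^{-\psi}h$, $H\mapsto e^{k\psi}H$, which stays in the complexified orbit, leaves $H\otimes h^{\otimes k}$, $\omega_1$, $\omega_2$ and hence $dV_1,dV_2$ unchanged, scales $\sum_j|t_j|^2$ by $e^{-k\psi}$, and therefore leaves untouched the scale-invariant quantities $\langle t_m,t_l\rangle/\sum_j|t_j|^2$ and the pointwise hermitian products of the hyperplane sections appearing in~\eqref{eq:balancedcondition}; thus $\mu_{SU}=0$ is preserved. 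The scalar equation of~\eqref{eq:munuzeros} then becomes an elliptic equation for $\psi$ which, exactly as in the proof of Lemma~\ref{lem:existencezeros}, is solved for small $\epsilon$ by the implicit function theorem, since at $\psi=0$, $\epsilon=0$ it reduces to $2k\rho_k=c'$ with linearisation the invertible operator $f\mapsto\Delta f+2kf$. This produces a point of the complexified orbit in $\mu_\nu^{-1}(0)\cap\mu_{SU}^{-1}(0)$.

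Finally, the last ``equivalently'' in the statement is a tautology: a point of the two-step quotient $\mu_\nu^{-1}(0)\cap\mu_{SU}^{-1}(0)/(\cXL\times SU(N)\times SU(M))$ is, by construction, the same thing as a complexified orbit meeting the intersection of the two zero loci. I expect the main obstacle to be the coordination of the two reductions in the ``only if'' direction: the two relevant symplectic forms are genuinely different -- the first reduction uses $\Omega_\cT$, the second the perturbed $\Omega_k$, which moreover is only symplectic and only of type $(1,1)$ for $k\gg0$ and up to $O(k^{-1})$ -- so ``moment map on $\cT/\!\!/\cXL$'' and the notion of complexified orbit there are to be read in the formal sense of the Remarks above; and, since for $\epsilon>0$ the two equations of~\eqref{eq:munuzeros} are coupled through $\tr\Lambda F_A$, the conformal rescaling used to enter $\mu_\nu^{-1}(0)$ moves the metrics slightly off the Bergman-constant ones of Proposition~\ref{prop:balancedmetrics}, so one must check carefully -- as sketched above and as in Lemma~\ref{lem:existencezeros} -- that $\mu_{SU}=0$ survives and that the three sets of normalisation constants can be matched simultaneously. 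Away from this bookkeeping the argument parallels \cite[Proposition~11]{D1}.
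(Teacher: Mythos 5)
Your proposal is correct and follows essentially the route the paper intends: the paper gives no written argument ("follows from the discussion above"), and your two directions are exactly an elaboration of that discussion --- the observation preceding the Proposition (that $\mu_\nu=0$ makes $\omega_2$ the Fubini--Study pullback, so $\mu_{SU}=0$ becomes the balanced system) for one implication, and Proposition~\ref{prop:balancedmetrics} combined with an implicit-function-theorem argument in the style of Lemma~\ref{lem:existencezeros} for the other. The coupled rescaling $h\mapsto e^{-\psi}h$, $H\mapsto e^{k\psi}H$ that preserves $H\otimes h^{\otimes k}$, $dV_1$, $dV_2$ and hence $\mu_{SU}=0$ while solving the scalar equation of \eqref{eq:munuzeros}, together with the implicit small-$\epsilon$ hypothesis, is consistent with (and makes explicit) what the paper leaves to the reader.
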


The proof follows from the discussion above and is left to the reader.

\appendix

\section{remaining calculations}\label{sec:appendix}

In this Appendix we record the details of the remaining calculations for the computation of the $B_2$ term in \secref{sec:Bergman}.

\begin{align*}
(D_\theta \cdot D_y)^2(c_2) & = \frac{2}{3} \partial^2_{\theta^j,\theta^k}(\tr\partial_{y^j}(\Psi^{-1}\partial_{y^k}\Psi)(x,z))\\
& = \frac{2}{3}\partial_{\theta^j}(\tr(\partial_{z^l}\partial_{y^j}(\Psi^{-1}\partial_{y^k}\Psi))(x,z)(\partial_{\theta^k}z^l))\\
& = \frac{2}{3}\tr \partial_{z^m}(\partial_{z^l}\partial_{y^j}(\Psi^{-1}\partial_{y^k}\Psi))(\partial_{\theta^j}z^m)(\partial_{\theta^k}z^l)\\
& = \frac{2}{3}\tr(\partial_{z^j}\partial_{z^k}\partial_{y^j}(\Psi^{-1}\partial_{y^k}\Psi))\\
& = \frac{2}{3}\tr \partial_{z^j}\partial_{y^j}(\Psi_{k,m}\Psi^{-1}_{m,l}\partial_{z^k}(\Psi^{-1}\partial_{y^l}\Psi))\\
& = \frac{2}{3}\tr \partial_{z^j}\partial_{y^j}(\Psi^{-1}_{k,l}\partial_{z^k}(\Psi^{-1}\partial_{y^l}\Psi))\\
& + \frac{2}{3} (\partial_{z^j}\partial_{y^j}\Psi)_{k,m}\tr \partial_{z^k}(\Psi^{-1}\partial_{y^m}\Psi)\\
& = \frac{1}{3}\Delta(S) + \frac{2}{3}(R_{j,\overline{j}})_{k,m}\tr R_{m,\overline{k}}\\
& = \frac{1}{3}\Delta(S) + \frac{2}{3}|\tr R|^2,
\end{align*}
\begin{align*}
(D_\theta \cdot D_y)^2(c_3) & = \frac{1}{4}\partial^2_{\theta^j,\theta^k}(\tr\Psi^{-1}(\partial_{y^j}\Psi)\tr\Psi^{-1}(\partial_{y^k}\Psi)(x,z))\\
& = \frac{1}{4} (\tr\partial_{\theta^j}(\Psi^{-1}\partial_{y^j}\Psi))(\tr\partial_{\theta^k}(\Psi^{-1}\partial_{y^k}\Psi))\\
& + \frac{1}{4} (\tr\partial_{\theta^k}(\Psi^{-1}\partial_{y^j}\Psi))(\tr\partial_{\theta^j}(\Psi^{-1}\partial_{y^k}\Psi))\\
& = \frac{1}{4}\tr R_{j,\overline{j}}\tr R_{k,\overline{k}}(0) + \frac{1}{4}\tr R_{j,\overline{k}}\tr R_{k,\overline{j}}\\
& = \frac{1}{4}S^2 + \frac{1}{4}|\tr R|^2,
\end{align*}
\begin{align*}
(D_\theta \cdot D_y)^2(c_4) & = -\frac{1}{12} \partial^2_{\theta^j,\theta^k}\tr(\Psi^{-1}(\partial_{y^j}\Psi)\Psi^{-1}(\partial_{y^k}\Psi)(x,z))\\
& = - \frac{1}{12} \tr(\partial_{\theta^j}(\Psi^{-1}\partial_{y^j}\Psi))(\partial_{\theta^k}(\Psi^{-1}\partial_{y^k}\Psi))\\
& - \frac{1}{12} \tr(\partial_{\theta^k}(\Psi^{-1}\partial_{y^j}\Psi))(\partial_{\theta^j}(\Psi^{-1}\partial_{y^k}\Psi))\\
& = - \frac{1}{12}\tr R_{j,\overline{j}}R_{k,\overline{k}} - \frac{1}{12}\tr R_{j,\overline{k}}R_{k,\overline{j}}\\
& = - \frac{1}{12}|\Lambda R|^2 - \frac{1}{12}|R|^2\\
& = - \frac{1}{12}|\tr R|^2 - \frac{1}{12}|R|^2,
\end{align*}
\begin{align*}
(D_\theta \cdot D_y)^2(d_1) & = 2 \partial^2_{\theta^j,\theta^k}((\partial_{z^l}(G^{-1}\partial_{y^j}G))(x,z)\partial_{y^k}z^l)\\
& = 2\tr(\partial_{z^l}(G^{-1}\partial_{y^j}G))(0)\partial^2_{\theta^k,y^k}(\partial_{\theta^j}z^l)\\
& = - 2 F_{j,\overline{l}}\partial^2_{\theta^k,y^k}((\partial_z\theta)^{-1}_{l,j})\\
& = F_{j,\overline{l}}(\partial_{\theta^k}(\Psi^{-1}\partial_{y^k}\Psi(y,z)))_{l,j}\\
& = - F_{j,\overline{l}}(R_{k,\overline{k}})_{l,j},
\end{align*}
\begin{align*}
(D_\theta \cdot D_y)^2(d_2) & = \partial^2_{\theta^j,\theta^k}(\partial_{y^j}(G^{-1}\partial_{y^k}G)(x,z))\\
& = \partial_{\theta^j}((\partial_{z^l}\partial_{y^j}(G^{-1}\partial_{y^k}G))(x,z)(\partial_{\theta^k}z^l))\\
& = \partial_{z^m}(\partial_{z^l}\partial_{y^j}(G^{-1}\partial_{y^k}G))(\partial_{\theta^j}z^m)(\partial_{\theta^k}z^l)\\
& = (\partial_{z^j}\partial_{z^k}\partial_{y^j}(G^{-1}\partial_{y^k}G))\\
& = \partial_{z^j}\partial_{y^j}(\Psi_{k,m}\Psi^{-1}_{m,l}\partial_{z^k}(G^{-1}\partial_{y^l}G))\\
& = \partial_{z^j}\partial_{y^j}(\Psi^{-1}_{k,l}\partial_{z^k}(G^{-1}\partial_{y^l}G))\\
& + (\partial_{z^j}\partial_{y^j}\Psi)_{k,m} \partial_{z^k}(G^{-1}\partial_{y^m}G)\\
& = \frac{1}{2}\Delta(i\Lambda F) + (R_{j,\overline{j}})_{k,m}F_{m,\overline{k}},
\end{align*}
\begin{align*}
(D_\theta \cdot D_y)^2(d_3) & = \partial^2_{\theta^j,\theta^k}(G^{-1}(\partial_{y^j}G)G^{-1}(\partial_{y^k}G)(x,z))\\
& = (\partial_{\theta^j}(G^{-1}\partial_{y^j}G))(\partial_{\theta^k}(G^{-1}\partial_{y^k}G))\\
& + (\partial_{\theta^k}(G^{-1}\partial_{y^j}G))(\partial_{\theta^j}(G^{-1}\partial_{y^k}G))\\
& = F_{j,\overline{j}}F_{k,\overline{k}}(0) + F_{j,\overline{k}}F_{k,\overline{j}}\\
& = - \Lambda F \Lambda F + F_{j,\overline{k}}F_{k,\overline{j}}.
\end{align*}

\end{document}